\definecolor{dark-red}{rgb}{0.5,0.15,0.15}
\definecolor{dark-blue}{rgb}{0.15,0.15,0.6}
\definecolor{dark-green}{rgb}{0.15,0.6,0.15}
\numberwithin{equation}{section}% makes equat numb contain the section
\newtheorem{Thm}[equation]{Theorem}
\newtheorem*{Thm*}{Theorem}
\newtheorem*{MainThm*}{Main Theorem}
\newtheorem{Prop}[equation]{Proposition}
\newtheorem{Lem}[equation]{Lemma}
\newtheorem{Cor}[equation]{Corollary}
\newtheorem*{Que*}{Question}
\theoremstyle{remark}
\newtheorem{Def}[equation]{Definition}
\newtheorem{Ter}[equation]{Terminology}
\newtheorem{Not}[equation]{Notation}
\newtheorem{Exa}[equation]{Example}
\newtheorem{Cons}[equation]{Construction}
\newtheorem{Rem}[equation]{Remark}
\tikzset{
    labelrotatebelow/.style={anchor=north, rotate=90, inner sep=1.0mm}
}
\tikzset{
    labelrotateabove/.style={anchor=south, rotate=90, inner sep=1.0mm}
}
\newcommand{\nc}{\newcommand}
\nc{\dmo}{\DeclareMathOperator}
\renewcommand{\emptyset}{\varnothing}
\nc{\Niko}[1]{{\color{Orange}#1}}
\nc{\Maxime}[1]{{\color{Green}#1}}
\nc{\Luca}[1]{{\color{Blue}#1}}
\nc{\overbar}[1]{\mkern 1.5mu\overline{\mkern-1.5mu#1\mkern-1.5mu}\mkern 1.5mu}
\nc{\kappaaux}{g}
\nc{\kappaCh}{{\kappaaux(\cat C_h)}}
\nc{\kappam}{{\kappaaux({\mathfrak m})}}
\nc{\kappaP}{\Gamma_{\cat P}\unit}
\nc{\kappaQ}{{\kappaaux(\cat Q)}}
\nc{\kappaCP}{{\kappaaux_{\cat C}(\cat P)}}
\nc{\kappaDP}{{\kappaaux_{\cat D}(\cat P)}}
\nc{\kappaCQ}{{\kappaaux_{\cat C}(\cat Q)}}
\nc{\kappaDQ}{{\kappaaux_{\cat D}(\cat Q)}}
\nc{\kappaphiB}{{\kappaaux(\phi(\cat B))}}
\nc{\kappaphiQ}{{\kappaaux(\varphi(\cat Q))}}
\dmo{\Sub}{Sub}
\dmo{\tw}{tw}
\dmo{\Proj}{Proj}
\dmo{\LMod}{LMod}
\dmo{\cell}{cell}
\nc{\SpEn}{\cat S_{E(n)}}
\nc{\SpEnf}{\cat S_n}
\nc{\Lcomp}{L^{\mathrm{com}}} %I made this and the next one commands because I'm unsure of the choice of notation
\nc{\Ucomp}{U^{\mathrm{com}}}
\nc{\Loco}[1]{\Loc_{\otimes}\hspace{-0.3ex}\langle #1 \rangle}
\nc{\bbullet}{{\scriptscriptstyle\hspace{-1pt}\bullet}}
\nc{\bullett}{{\scriptscriptstyle\bullet}\hspace{-1pt}}
\nc{\LF}{L\hspace{-0.2ex}F}
\nc{\SpG}{\Sp^G}
\nc{\EG}{\bbE_G}
\nc{\DEG}{\Der(\EG)}
\nc{\DE}{\Der(\bbE)}
\nc{\Prst}{{\cat P}\mathrm{r^{st}}}
\nc{\Mack}[2]{\mathrm{Mack}_{#1}(#2)}
\nc{\SC}{S\cat C}
\dmo{\fin}{{fin}}
\dmo{\DM}{DM}
\dmo{\fp}{fp}
\nc{\DMQ}{\DM_Q}
\dmo{\DerKal}{DMack}
\dmo{\Der}{D}
\dmo{\DMot}{DMot}
\dmo{\rmH}{H}
\dmo{\piu}{\underline{\pi}}
\dmo{\Sphere}{\mathbb{S}}
\dmo{\Alg}{Alg}
\dmo{\CAlg}{CAlg}
\nc{\HA}{{\rmH \hspace{-0.2em}\bbA}}
\nc{\HZ}{{\rmH \hspace{-0.2em}\bbZ}}
\nc{\HZbar}{{\rmH \hspace{-0.2em}\underline{\bbZ}}}
\nc{\Fp}{{\bbF_{\hspace{-0.1em}p}}}
\nc{\HFp}{{\rmH \hspace{-0.15em}\bbF_{\hspace{-0.1em}p}}}
\nc{\DHZG}{\Der(\HZ_G)}
\nc{\DHZH}{\Der(\HZ_H)}
\nc{\DHZK}{\Der(\HZ_K)}
\nc{\DHZGN}{\Der(\HZ_{G/N})}
\nc{\DHZGG}{\Der(\HZ_{G/G})}
\nc{\DHZCp}{\Der(\HZ_{C_p})}
\nc{\DHZGprime}{\Der(\HZ_{G'})}
\nc{\DHZ}{\Der(\HZ)}
\nc{\mathfrakp}{\mathfrak{p}}
\nc{\mathfrakq}{\mathfrak{q}}
\nc{\mathfrakS}{\mathfrak{S}}
\nc{\mathfrakT}{\mathfrak{T}}
\nc{\Z}{\mathbb{Z}}
\nc{\SSG}{\text{sSet}_*^G}
\nc{\sSet}{\text{sSet}}
\dmo{\csupp}{csupp}
\dmo{\Con}{Conj}
\dmo{\Id}{Id}
\dmo{\Loc}{Loc}
\dmo{\rmK}{\textrm{\rm K}}
\dmo{\Spc}{Spc}
\dmo{\thick}{thick}
\nc{\thickt}[1]{\thick_\otimes\langle #1 \rangle}
\nc{\loct}[1]{\loc_\otimes\langle #1 \rangle}
\dmo{\cone}{cone}
\dmo{\End}{End}
\dmo{\Derperf}{D_{perf}}
\dmo{\Mor}{Mor}
\dmo{\Hom}{Hom}
\dmo{\id}{id}
\dmo{\incl}{incl}
\dmo{\Img}{Im}
\dmo{\im}{im}
\dmo{\Ker}{Ker}
\dmo{\ind}{ind}
\dmo{\Ind}{Ind}
\dmo{\CoInd}{coind}
\dmo{\res}{res}
\dmo{\infl}{infl}
\dmo{\Derqc}{D_{qc}}
\dmo{\triv}{triv}
\dmo{\sep}{sep}
\dmo{\Tel}{Tel} %telescope
\dmo{\grMod}{grMod}%
\dmo{\Mod}{Mod}%
\dmo{\opname}{op}
\dmo{\SH}{SH}% ground name for cat of spectra
\dmo{\smallb}{b}% ground exponent for ``bounded''
\dmo{\Spec}{Spec}
\dmo{\supp}{supp}
\dmo{\Supp}{Supp}
\dmo{\cosupp}{cosupp}
\dmo{\Cosupp}{Cosupp}
\nc{\SHc}{{\SH^c}}
\nc{\SHp}{{\SH_{(p)}}}
\nc{\SHcp}{{\SH^c_{(p)}}}
\nc{\SHG}{\SH(G)}
\nc{\SHGp}{\SH(G)_{(p)}}
\nc{\SHGc}{\SHG^c}
\nc{\SHGcp}{\SHG^c_{(p)}}
\nc{\quadtext}[1]{\quad\textrm{#1}\quad}
\nc{\qquadtext}[1]{\qquad\textrm{#1}\qquad}
\nc{\adj}{\dashv}
\nc{\adjto}{\rightleftarrows}
\nc{\bbL}{\mathbb{L}}
\nc{\bbA}{\mathbb{A}}
\nc{\bbE}{\mathbb{E}}
\nc{\bbN}{\mathbb{N}}
\nc{\bbQ}{\mathbb{Q}}
\nc{\bbZ}{\mathbb{Z}}
\nc{\bbF}{\mathbb{F}}
\nc{\cat}[1]{\mathscr{#1}}%or: \nc{\cat}[1]{\mathcal{#1}}
\nc{\ie}{{\sl i.e.}, }
\nc{\into}{\mathop{\rightarrowtail}}
\nc{\inv}{^{-1}}
\nc{\isoto}{\mathop{\overset{\sim}\to}}
\nc{\isotoo}{\mathop{\overset{\sim}\too}}
\nc{\onto}{\mathop{\twoheadrightarrow}}
\nc{\too}{\mathop{\longrightarrow}\limits}
\nc{\mapstoo}{\longmapsto}
\nc{\adh}[1]{\overline{#1}}% adherence
\nc{\adhpt}[1]{\adh{\{#1\}}}% adherence of a pt
\nc{\aka}{{a.\,k.\,a.}\ }
\nc{\calF}{\mathcal{F}}
\nc{\eg}{{\sl e.\,g.}}
\nc{\Homcat}[1]{\Hom_{\cat #1}}
\nc{\hook}{\hookrightarrow}
\nc{\ideal}[1]{\langle #1\rangle}
\nc{\ihom}{{\underline{\hom}}}
\nc{\iHom}{\underline{\mathrm{Hom}}}
\nc{\Mid}{\,\big|\,}
\nc{\MMod}{\,\text{-}\Mod}%
\nc{\GrMMod}{\,\text{-}\grMod}%
\nc{\op}{^{\opname}}
\nc{\oto}[1]{\overset{#1}\to}
\nc{\otoo}[1]{\overset{#1}{\,\too\,}}
\nc{\sminus}{\!\smallsetminus\!}
\nc{\poplus}[1]{^{\oplus #1}}%
\nc{\potimes}[1]{^{\otimes #1}}% tensor power
\nc{\sbull}{{\scriptscriptstyle\bullet}}%\mathbf{\cdot}}%{}}
\nc{\SET}[2]{\big\{\,#1\Mid#2\,\big\}}
\nc{\SpcK}{\Spc(\cat K)}% most used
\nc{\then}{\Rightarrow}
\nc{\unit}{\mathbb{1}}% unit for \otimes
\nc{\xra}{\xrightarrow}
\nc{\phigeom}[1]{\widetilde{\Phi}^{#1}}
\dmo{\Oname}{O}
\dmo{\proper}{proper}% for proper subgroups
\dmo{\lenormal}{\unlhd}
\dmo{\lnormal}{\lhd}
\nc{\normal}{\trianglelefteq}%\lhd
\nc{\Op}{\Oname^p}% O^p for maximal p-normal subgroup
\nc{\Oq}{\Oname^q}% as above for p=q
\dmo{\Sp}{Sp}
\dmo{\Ho}{Ho}
\dmo{\Fin}{Fin}
\dmo{\add}{add}
\dmo{\Fun}{Fun}
\dmo{\Ext}{Ext}
\dmo{\CMon}{CMon}
\dmo{\BB}{\cat B}
\dmo{\CC}{\cat C}
\dmo{\DD}{\cat D}
\dmo{\MM}{\cat M}
\dmo{\NN}{\cat N}
\dmo{\OO}{\mathcal{O}}
\dmo{\Map}{Map}
\dmo{\Span}{Span}
\dmo{\N}{N}
\dmo{\Cat}{Cat}
\dmo{\colim}{colim}
\dmo{\hocolim}{hocolim}
\dmo{\Ch}{Ch}
\dmo{\A}{\mathbb{A}^{eff}}
\nc{\AGeff}{\mathbb{A}_G^{\mathrm{eff}}}
\nc{\BGeff}{\mathcal{B}_G^{\mathrm{eff}}}
\nc{\BG}{{\mathcal{B}_G}}
\nc{\NBGeff}{{\N}{\BGeff}}
\dmo{\Ab}{Ab}
\dmo{\Set}{Set}
\dmo{\ev}{ev}
\dmo{\Spcl}{Spcl}
\nc{\Funadd}{\Fun_{\add}}
\dmo{\proj}{proj}
\dmo{\cof}{cof}
\nc{\StModfin}{\mathrm{StMod}^{\mathrm{fin}}}
\nc{\PrL}{\mathrm{Pr}^{\mathrm{L}}_{\mathrm{st}}}
\dmo{\Coideal}{Coideal}
\dmo{\gen}{gen}
\dmo{\Coloc}{Coloc}
\nc{\Coloco}[1]{\Coloc^{\iHom}\hspace{-0.3ex}\langle #1 \rangle}
\dmo{\dual}{dual}
\nc{\LOCO}{\mathcal{L}\mathrm{oc}_{\otimes}}
\nc{\COLOCO}{\mathcal{C}\mathrm{oloc}^{\iHom}}
\nc{\Perff}[1]{\mathrm{Perf}_{#1}}
\nc{\Modd}[1]{\mathrm{Mod}_{#1}}
\dmo{\Perf}{Perf}
\dmo{\tel}{tel}
\nc{\Lococat}[2]{\Loc^{#1}_{\otimes}\hspace{-0.3ex}\langle #2 \rangle}
\dmo{\rk}{rk}
\dmo{\StMod}{StMod}
\dmo{\stmod}{stmod}
\nc{\CSep}{\mathcal{C}\mathcal{S}\mathrm{ep}}
\nc{\CFin}{\mathcal{F}\mathrm{in}}
\nc{\Ccpl}[1]{\CC^{#1\text{-}\mathrm{cpl}}}
\nc{\Ctors}[1]{\CC^{#1\text{-}\mathrm{tors}}}
\nc{\Cloc}[1]{\CC^{#1\text{-}\mathrm{loc}}}
\nc{\fib}[1]{\mathrm{fib}}
\nc{\tors}{\mathrm{tors}}
\nc{\cpl}{\mathrm{cpl}}
\nc{\loc}{\mathrm{loc}}
\nc{\doublefaktor}[3]{% 
    {\textstyle #1}
    \mkern-4mu\scalebox{1.5}{$\diagdown$}\mkern-5mu^{\textstyle #2}%
    \mkern-4mu\scalebox{1.5}{$\diagup$}\mkern-5mu{\textstyle #3} }
\nc{\QW}{W^{\text{Q}}}
\nc{\mT}{\kern-0.5em\mod\kern-0.1em\text{-}\cat{T}^c}
\nc{\mTc}{\kern-0.5em\mod\kern-0.1em\text{-}\cat{T}^c}
\nc{\MTc}{\Mod\kern-0.1em\text{-}\cat{T}^c}
\nc{\MT}{\Mod\kern-0.1em\text{-}\cat{T}}
\newcounter{enum-resume-hack}
\Crefname{Thm}{Theorem}{Theorems}
\Crefname{Prop}{Proposition}{Propositions}
\Crefname{Lem}{Lemma}{Lemmas}
\Crefname{thmx}{Theorem}{Theorems}
\begin{document}
\setlength{\parindent}{0cm}
\setlength{\parskip}{0.8ex}
\title{Separable commutative algebras in equivariant homotopy theory}

\author[Naumann]{Niko Naumann}
\author[Pol]{Luca Pol}
\author[Ramzi]{Maxime Ramzi}
\date{\today}

\address{Niko Naumann, Fakult{\"a}t f{\"u}r Mathematik, Universit{\"a}t Regensburg, Universit{\"a}tsstraße 31, 93053 Regensburg, Germany}
\email{Niko.Naumann@mathematik.uni-regensburg.de}
\urladdr{https://homepages.uni-regensburg.de/$\sim$nan25776/}

\address{Luca Pol, Fakult{\"a}t f{\"u}r Mathematik, Universit{\"a}t Regensburg, Universit{\"a}tsstraße 31, 93053 Regensburg, Germany}
\email{luca.pol@mathematik.uni-regensburg.de}
\urladdr{https://sites.google.com/view/lucapol/}

\address{Maxime Ramzi,
FachBereich Mathematik und Informatik, Universit{\"a}t M{\"u}nster, Einsteinstraße 62 Germany}
\email{mramzi@uni-muenster.de}
\urladdr{https://sites.google.com/view/maxime-ramzi-en/home}

\begin{abstract}
Given a finite group $G$ and a commutative ring $G$-spectrum $R$, we study the separable commutative algebras in the category of compact $R$-modules. We isolate three conditions on the geometric fixed points of $R$ which ensure that every separable commutative algebra is standard, i.e. arises from a finite $G$-set. 
In particular we show that all separable commutative algebras in the categories of compact objects in $G$-spectra and in derived $G$-Mackey functors are standard provided that $G$ is a $p$-group. In these categories we also show that for a general finite group $G$, not all separable commutative algebras are standard. We finally discuss how the classification of separable commutative algebras in compact $G$-spectra varies if we require the existence of multiplicative norms. We show that if $G$ is solvable, then any separable commutative algebra therein that is normed is automatically standard. However, if $G$ is not solvable, we provide examples of separable commutative algebras that are normed but not standard.  
\end{abstract}

%\keywords{}
\subjclass[2020]{18F99, 55P42, 55P91, 55U35}
% 18F99 // tt-geometry
% 55P42 // stable homotopy theory, spectra
% 55P91 // equivariant homotopy theory in algebraic topology
% 55U35 // abstract and axiomatic homotopy theory in algebraic topology

\maketitle
\setcounter{tocdepth}{1}
\tableofcontents

\section{Introduction}

The study of separable commutative algebras in the setting of tensor-triangular geometry was initiated by Balmer in \cite{Balmer2011}. Separable commutative algebras are useful in this context for several reasons.
Firstly, they allow to form categories of modules internally to the world of triangulated categories, without the necessity of assuming that the triangulated category admits a model. Building on this, Balmer showed that restriction functors in (modular) representation theory can be understood as an extension-of-scalars functors \cite{Balmer2015}, and proved a generalization of the Neeman-Thomason Localization Theorem \cite{Balmer2016-etale}.
Secondly, separable commutative algebras allow us to formulate a going-up theorem in tt-geometry \cite{Balmer2016}, which is a useful tool for calculating the Balmer spectrum of a triangulated category, and powerful descent results in tt-geometry~\cite{Balmer2016},\cite{descent} and \cite{BCHNP}.
Finally, separable commutative algebras are tightly related to the \'etale topology of the given tt-category. A result of Neeman \cite{Neeman2018} informally says that the only separable commutative algebras in the perfect derived category of a Noetherian scheme are those arising from finite \'etale morphisms. This connection with the \'etale topology was strengthened even further by the first two authors in \cite{NaumannPol} where they showed that separable commutative algebras recover finite \'etale algebras in various contexts where a notion of \'etaleness already exists. 

The study and classification of separable commutative algebras is more complicated when some equivariance comes into play. Given a finite group $G$ and a separably closed field $k$, Balmer showed that the only separable commutative algebras in the category of $kG$-modules are those of the form $kX$ for some finite $G$-set $X$. We will informally refer to this result as saying that in the category of $kG$-modules all separable algebra are \emph{standard}. Balmer went one step further and asked if the same holds in the stable module category \cite[Question 4.7]{Balmer2015}:
\begin{Que*}[Balmer]
Let $k$ be a separably closed field and $A\in \stmod(kG)$ be a separable commutative algebra. Is there a finite $G$-set $X$ such that $A\simeq  kX$ in $\stmod(kG)$?
\end{Que*}
Informally this problem asks whether the \'etale topology which appears in modular representation theory is richer than what is produced by subgroups. A positive answer to this question was given by Balmer-Carlson \cite{BC2018} in the case of cyclic $p$-groups and by the first two authors \cite{NaumannPol} for $p$-groups of $p$-rank one. This question is completely open outside of these cases.  

In this paper we aim to address the analogue of Balmer's question in the setting of equivariant stable homotopy theory. Fix a finite group $G$ and a commutative ring $G$-spectrum $R \in \CAlg(\Sp_G)$, i.e. a commutative algebra in genuine $G$-spectra. Given a finite $G$-set $X$, we can form the commutative algebra $\mathbb{D}(X_+) \in \CAlg(\Sp_G)$ by taking the dual of the suspension spectrum of $X_+$; this is separable by work of Balmer-Dell'Ambrogio-Sanders~\cite{Balmer_Ambrogio_Sanders}. Base-changing to our $R$ we obtain a functor 
\[
R^{(-)}\colon \Fin\op_G \to \CAlg^{\sep}(\Mod_{\Sp_G}(R)^\omega)  
\]
into the category of separable commutative algebras in compact $R$-modules. We will denote the restriction to groupoids of this functor as
\[
R^{(-)}\colon \CFin\op_G \to \CSep(\Mod_{\Sp_G}(R)^\omega).
\]
In this paper we isolate three conditions that ensure that this functor is an equivalence, showing in particular that every separable algebra is standard. 
Recall that for all subgroups $K \subseteq G$, the $K$-geometric fixed points $\Phi^K(R)$ carries an action of the Weyl group $W_G(K)$ so that  
\[
\Phi^K(R) \in \Fun(BW_G(K), \CAlg).
\]
We then consider the following conditions for $(R,K)$:
\begin{itemize}
    \item[(1)] For all $1 \not = U \subseteq W_G(K)$, the commutative algebras $(\Phi^K(R))^{hU}$ and $ (\Phi^K(R))^{tU}$ are indecomposable. 
    \item[(2)] If $\Phi^K(R)$ is a retract in $\Mod(\Phi^K(R))^{t U}$ of the coinduced algebra $(\Phi^K(R))^{U/V}$ for some $V \subseteq U \subseteq W_G(K)$, then $U=V$, see \Cref{def-retraction condition}. 
    \item[(3)] $\Phi^K(R)$ is separably closed, see \Cref{def-sep}.
    \item[(4)] $\pi_0^K(R)$ is indecomposable and $\pi_0(\Phi^K R)$ is nonzero.
\end{itemize}
We will show that the above conditions are satisfied for all $K$  if $R=\mathbb{S}_G$ is the sphere $G$-spectrum, and also if $R=\infl_e^G \bbZ$, provided that $G$ is a $p$-group. The latter case has gained increasing interest for its connection to the category of derived $G$-Mackey functors of Kaledin, see \cite{PSW}. 
Our main result then states:

\begin{Thm*}[\ref{main-theorem}, \ref{prop-fully-faulful}]
    Suppose that $G$ and $R$ satisfy conditions (1), (2) and (3) for all $K\subseteq G$. Then 
    \[
    R^{(-)}\colon \CFin\op_G \stackrel{\simeq}{\longrightarrow} \CSep(\Mod_{\Sp_G}(R)^\omega)
    \]   
    is an equivalence, and so any separable commutative algebra is standard. If moreover (4) holds for all $K \subseteq G$, then we have an equivalence 
    \[
    R^{(-)}\colon \Fin\op_G \stackrel{\simeq}{\longrightarrow}  \CAlg^{\sep}(\Mod_{\Sp_G}(R)^\omega).
\]
\end{Thm*}

It follows that if $G$ is a $p$-group, then all separable commutative algebras are standard, both in compact $G$-spectra and in compact derived $G$-Mackey functors. In fact, for these categories, we can show that we have an equivalence before passing to groupoids. The techniques developed in this paper also allow us to produce some counterexamples. Indeed, we show in \Cref{sec-nonall} that if $G=C_6$, then there are separable commutative algebras in the above two categories that are not standard.  

As an immediate application of our work, we obtain the following computations of the Galois group in the sense of \cite{Mathew2016}. 

\begin{Thm*}[\ref{cor-galois}, \ref{cor-galois-cpq}]
   Let $p\neq q$ be prime numbers. 
   \begin{itemize}
       \item[(a)] If $G$ is a $p$-group, then the Galois groups of $\Sp_G$ is trivial. 
       \item[(b)] If $G$ is cyclic of order $pq$, then the Galois group of $\Sp_G$ is $\widehat{\Z}$.
   \end{itemize}
\end{Thm*}

In \cref{section-normed} we discuss how the classification of separable commutative algebras in compact $G$-spectra varies if we require the existence of multiplicative norms. We show that if $G$ is solvable, then any separable commutative algebra therein which is normed is automatically standard. However, if $G$ is not solvable, we provide examples of separable commutative algebras that are normed but not standard.

\subsection*{Acknowledgements} We thank Bastiaan Cnossen, John Greenlees, Achim Krause and Phil P\"{u}tzst\"{u}ck for useful conversations. We also thank the anonymous referee for their careful reading, helpful comments and for suggesting to consider the classification of normed separable algebras.

 NN was supported by the SFB 1085 Higher Invariants in Regensburg. LP is grateful to Max Planck Institute for Mathematics in Bonn for its hospitality and financial support. LP was supported by the European Research Council (ERC) under Horizon Europe (grant No.~101042990) and by the SFB 1085 Higher Invariants in Regensburg. MR was supported by the Danish National Research Foundation through the Copenhagen Centre for Geometry and Topology (DNRF151) by the Deutsche Forschungsgemeinschaft (DFG, German Research Foundation) - Project-ID 427320536 - SFB 1442, as well as under Germany’s Excellence Strategy EXC 2044 390685587, Mathematics Münster: Dynamics-Geometry-Structure. 

\subsection*{Notations}
\begin{enumerate}
\item We denote by $\Cat_\infty$ the $\infty$-category of $\infty$-categories.    \item We denote by $\CAlg$ the $\infty$-category of $\mathbb{E}_\infty$-algebras in spectra and by $\CAlg^{\heartsuit}$ the category of discrete commutative rings. 
    \item We denote by $\Cat_\infty^{\mathrm{st}}$ the $\infty$-category of essentially small stable $\infty$-category and exact functors. 
    \item We denote by $\CAlg(\Cat_\infty^{\mathrm{st}})$ the $\infty$-category of essentially small, symmetric monoidal stable $\infty$-categories whose tensor product is exact in each variable, and symmetric monoidal exact functors. 
    \item We denote by $\Cat_\infty^{\mathrm{perf}}$ the $\infty$-category of essentially small, idempotent complete stable $\infty$-categories and exact functors. 
    \item We denote by $\CAlg(\Cat_\infty^{\mathrm{perf}})$ the $\infty$-category of $2$-rings and symmetric monoidal exact functors. Recall that a $2$-ring is an essentially small, idempotent complete, symmetric monoidal and stable $\infty$-category whose tensor product is exact in each variable. 
    \item We denote by $\CAlg(\PrL)$ the $\infty$-category of presentably symmetric monoidal stable $\infty$-categories and symmetric monoidal left adjoints. 
    \item We denote by $\loc_\otimes$ (resp. $\thick_\otimes$) the localizing (resp. thick) tensor ideal generated by a given class of objects.
    \item For a finite group $G$, we let $\Sp_G$ denote the $\infty$-category of genuine $G$-spectra. 
    \item Given $R \in \CAlg(\Sp_G)$, we write $\Mod_{\Sp_G}(R)$ for the $\infty$-category of $R$-modules in $\Sp_G$ and $\Mod_{\Sp_G}(R)^\omega$ for its subcategory of compact objects. In the special case $G=e$, we know that the compact objects are equal to the perfect $R$-modules and so we will write $\Perf(R)$ instead of 
    $\Mod_{\Sp}(R)^\omega$.
\end{enumerate}

\section{Preliminaries}

In this section we fix some notation and recall a few key results that we will use throughout the paper. 

 Consider a symmetric monoidal stable $\infty$-category $\cat C$ and let $A \in \CAlg(\cat C)$ be a commutative algebra object. Recall from~\cite{Balmer2011} that $A$ is \emph{separable} if the multiplication map $A \otimes A \to A$ admits an $A$-bimodule section. We will write $\CAlg^{\sep}(\cat C)$ for the full subcategory of commutative algebra objects $\CAlg(\cat C)$ spanned by the separable algebras. It is immediate from the definition that any separable commutative algebra in $\cat C$ induces a separable commutative algebra object in the homotopy category $\Ho(\cat C)$. The following result gives us a converse.

\begin{Thm}[cf. {\cite[Theorem B]{Ramzi2023}}]\label{thm-separable-HO}
    Let $\cat C$ be a symmetric monoidal stable $\infty$-category, and let $A$ be an algebra in $\Ho(\cat C)$ which is separable and commutative. Then $A$ admits an essentially unique lift $\widetilde{A}\in \CAlg(\cat C)$ (and $\tilde A$ is separable). Furthermore, for any $R \in \CAlg(\cat C)$, the canonical map 
    \[
    \Map_{\CAlg(\cat C)}(\widetilde{A}, R) \xrightarrow{\sim} \Map_{\CAlg(\Ho(\cat C))}(A,R)
    \]
    is an equivalence.
\end{Thm}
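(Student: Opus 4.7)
The plan is to reduce the statement about mapping spaces and essentially unique lifts to a single rigidity statement: for a separable commutative algebra $A$, the space of $\mathbb{E}_\infty$-algebra structures in $\cat C$ refining a given such structure in $\Ho(\cat C)$ is contractible. Both existence, uniqueness, and the mapping space equivalence then follow formally.

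First, I would handle the associative case, showing that a separable associative algebra in $\Ho(\cat C)$ admits an essentially unique $\mathbb{E}_1$-lift to $\cat C$. The key input is that an $\mathbb{E}_1$-structure on an object $X \in \cat C$ is encoded by the simplicial bar object $\mathrm{Bar}_\bullet(X)$, and a bimodule section $s \colon A \to A \otimes A$ of the multiplication (which exists by separability, at least in $\Ho(\cat C)$) provides extra degeneracies, turning the relevant cobar/bar diagrams into split simplicial objects. Split simplicial objects are rigid, which forces the space of $\mathbb{E}_1$-structure enhancements to be discrete, and in particular contractible over a given path component once one checks existence of at least one lift (which one can arrange by a small obstruction-theoretic argument using the same splitting).

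Next, upgrade $\mathbb{E}_1$ to $\mathbb{E}_\infty$. Given commutativity of $A$ in $\Ho(\cat C)$, the section $s$ can be chosen to respect the swap, essentially because separable sections of commutative algebras are unique up to contractible choice of homotopies, and any two of them agree in the homotopy category. This gives coherent symmetry data at the level of $\Ho(\cat C)$; the bar splitting then upgrades to an equivariant splitting, showing that the space of $\mathbb{E}_\infty$-structures refining the given $\mathbb{E}_1$-structure is again discrete. Combined with the associative case, this yields the existence and essential uniqueness of $\widetilde{A} \in \CAlg(\cat C)$ (and separability of $\widetilde A$ is immediate, being a property at the level of $\Ho(\cat C)$).

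For the mapping space statement, given $R \in \CAlg(\cat C)$, observe that a map $\widetilde A \to R$ in $\CAlg(\cat C)$ is the same as an $\mathbb{E}_\infty$-$\widetilde A$-algebra structure on $R$; correspondingly, a map $A \to R$ in $\CAlg(\Ho(\cat C))$ is the same as an $A$-algebra structure on $R$ in $\Ho(\cat C)$. Applying the uniqueness-of-lifts part of the theorem inside the symmetric monoidal $\infty$-category $\Mod_{\widetilde A}(\cat C)$ (whose homotopy category is $\Mod_A(\Ho \cat C)$ by a separate descent/split-simplicial argument, again powered by separability) reduces the statement to the triviality that $\mathbb{E}_\infty$-algebra maps out of the unit are the same in $\cat C$ and in $\Ho(\cat C)$. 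The hardest step is the bar-splitting rigidity used in both passes: making precise the sense in which a mere $\Ho(\cat C)$-level bimodule section of the multiplication lifts, coherently and essentially uniquely, to the $\infty$-categorical diagrams that encode $\mathbb{E}_1$- and then $\mathbb{E}_\infty$-structures, is the technical heart of the argument.
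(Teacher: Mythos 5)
The paper itself gives no proof of this statement: it is cited as Theorem~B of \cite{Ramzi2023} and used as a black box, so there is no argument in the paper against which to compare your attempt; I can only assess your sketch on its own terms.

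Your guiding heuristic --- separability forces rigidity --- is the right one, and reducing the mapping-space claim to a uniqueness-of-lift statement in a module category is a sensible move. But there is a genuine level confusion at the crux of the existence step. The bar simplicial object $\mathrm{Bar}_\bullet(X)$ you want to split is only available \emph{as a simplicial object of $\cat C$} once $X$ already carries an $\mathbb{E}_1$-structure in $\cat C$ --- i.e.\ precisely what you are trying to construct --- whereas the bimodule section $s\colon A\to A\otimes A$ furnished by separability lives only in $\Ho(\cat C)$. A section at the level of the homotopy category does not equip any simplicial object of $\cat C$ with coherent extra degeneracies, so the ``split simplicial objects are rigid'' principle has nothing to act on. What your plan actually shows is that \emph{if} a lift existed and \emph{if} a coherent splitting of the bar diagram in $\cat C$ existed, then the lift would be essentially unique; neither existence claim is established, and the ``small obstruction-theoretic argument'' you allude to is exactly the technical heart of the theorem. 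The upgrade from $\mathbb{E}_1$ to $\mathbb{E}_\infty$ has the same problem and in addition must control the full tower of arities of the $\mathbb{E}_\infty$-operad, not just the binary swap you mention; asserting that ``the section $s$ can be chosen to respect the swap'' is not a proof of coherent symmetry. Finally, the mapping-space paragraph quietly invokes the identification $\Ho(\Mod_{\widetilde{A}}(\cat C))\simeq \Mod_A(\Ho(\cat C))$; this too is a nontrivial, separability-powered statement that needs its own argument rather than a parenthetical gesture.
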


A remarkable consequence of the previous result is that for any symmetric monoidal stable $\infty$-category $\cat C$, the $\infty$-category $\CAlg^{\sep}(\cat C)$ is in fact a $1$-category. 

\begin{Not}\label{not-CAlgsep}
 Let $\CSep(-)$ denote the composite functor $\CAlg^{\sep}(-)^\simeq$ which takes the maximal groupoid of the subcategory of separable commutative algebra.    
\end{Not}

We recall the following result from ~\cite[Corollary 3.24]{Ramzi2023}, which in a slightly different form also appears in~\cite[Proposition 6.3]{NaumannPol}.

\begin{Prop}\label{prop-sep-preserves-limits}
    The functor 
    \[
    \CAlg^{\sep}(-)\colon \CAlg(\Cat_{\infty}^{\mathrm{st}}) \to \Cat_\infty
    \]
    preserves limits. Consequently, $\CSep(-)$ also preserves limits.
\end{Prop}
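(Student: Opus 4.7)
The plan is to reduce the statement for $\CAlg^{\sep}(-)$ to the corresponding statement for $\CAlg(-)$, exploiting the rigidity of separable commutative algebras provided by \Cref{thm-separable-HO}. First I would recall that $\CAlg(-)\colon \CAlg(\Cat_\infty^{\mathrm{st}}) \to \Cat_\infty$ preserves limits: a commutative algebra structure in the limit of a diagram of symmetric monoidal $\infty$-categories is the same as a compatible family of commutative algebra structures in each term. Fix a limit diagram $\cat C = \lim_i \cat C_i$ in $\CAlg(\Cat_\infty^{\mathrm{st}})$ with structure maps $F_i\colon \cat C \to \cat C_i$. Since each inclusion $\CAlg^{\sep}(\cat C_i) \hookrightarrow \CAlg(\cat C_i)$ is fully faithful, and limits of fully faithful functors are fully faithful, the induced map $\lim_i \CAlg^{\sep}(\cat C_i) \to \CAlg(\cat C)$ is fully faithful, with image the full subcategory on those $A$ such that $F_i(A)$ is separable for every $i$.

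It therefore suffices to show that $A \in \CAlg(\cat C)$ is separable if and only if each $F_i(A)$ is separable in $\cat C_i$. The ``only if'' direction is immediate, since a symmetric monoidal exact functor carries a bimodule section of $\mu_A\colon A\otimes A \to A$ to a bimodule section of $\mu_{F_i(A)}$, so each $F_i$ preserves separability.

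For the converse, the key input from \Cref{thm-separable-HO} is that the ``separability structure'' on a separable commutative algebra is essentially unique: for any separable commutative algebra $B$ in a symmetric monoidal stable $\infty$-category, the space of $B$-bimodule sections of $\mu_B$ is contractible. Granting this, the $\infty$-category of $A$-bimodules in $\cat C$ identifies with the limit of the $\infty$-categories of $F_i(A)$-bimodules in $\cat C_i$, and hence the space of $A$-bimodule sections of $\mu_A$ is the limit of the corresponding contractible (and in particular non-empty) spaces of sections over the index $i$. This limit is non-empty, so $A$ admits a bimodule section of its multiplication and is therefore separable. The final claim for $\CSep(-)$ then follows since taking the maximal subgroupoid preserves limits in $\Cat_\infty$.

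The main technical obstacle is establishing the contractibility of the space of bimodule sections for a separable commutative algebra. This is a strengthening of the slogan that ``separable is a property, not a structure,'' and extracting it from \Cref{thm-separable-HO} requires a careful argument combining the essential uniqueness of lifts provided by that theorem with the compatibility of bimodule $\infty$-categories under limits of symmetric monoidal stable $\infty$-categories.
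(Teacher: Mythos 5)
Your overall strategy is the right one, and it is the natural way to prove this statement: use that $\CAlg(-)$ preserves limits, identify $\lim_i\CAlg^{\sep}(\cat C_i)$ with the full subcategory of $\CAlg(\lim_i\cat C_i)$ on those $A$ with each $F_i(A)$ separable, and then argue that separability of $A$ is equivalent to separability of all the $F_i(A)$. The only nontrivial direction is the converse, and you correctly identify that the way to avoid a $\lim^1$ problem is to upgrade ``the set of bimodule sections of $\mu_B$ is nonempty'' to ``the \emph{space} of bimodule sections of $\mu_B$ is contractible'' for $B$ separable, so that the limit over $i$ of these spaces is again contractible, hence nonempty. You also correctly note that mapping spaces in a limit of $\infty$-categories are limits of mapping spaces, and that $(-)^{\simeq}$ is a right adjoint, giving the statement for $\CSep$.

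The one place where your write-up leaves a genuine gap is exactly the contractibility claim: you flag it as ``the main technical obstacle'' and propose to extract it from \Cref{thm-separable-HO} via ``a careful argument,'' but you do not supply one, and in fact no such detour is needed — the claim admits a short direct proof. If $B$ is a separable commutative algebra in a stable symmetric monoidal $\infty$-category, write $B^e:=B\otimes B$, so $\mathrm{BMod}_B\simeq\Mod_{B^e}$, and let $\sigma$ be a chosen bimodule section of $\mu\colon B^e\to B$. The idempotent $\sigma\mu\in\pi_0\End_{B^e}(B^e)\simeq\pi_0(B^e)$ is given by multiplication by some $e\in\pi_0(B^e)$; this $e$ acts as the identity on the retract $B$ and as zero on $C:=\mathrm{fib}(\mu)$. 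The splitting $B^e\simeq B\oplus C$ identifies the space of bimodule sections of $\mu$ (the fiber of $\mu_*\colon\Map_{B^e}(B,B^e)\to\Map_{B^e}(B,B)$ over $\id_B$) with $\Map_{B^e}(B,C)$. But on $\Map_{B^e}(B,C)$, multiplication by $e$ (through the source $B$) is the identity while multiplication by $e$ (through the target $C$) is zero, and these agree because $B^e$ is commutative and the maps are $B^e$-linear; hence this mapping spectrum is zero and the space is contractible. With this filled in, your proof is complete.
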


\section{Homotopy $G$-fixed points and separable algebras}

In this section we will work with $\infty$-categories which are equipped with an action of a finite group. We start by recalling the definition of the homotopy $G$-fixed points of an $\infty$-category with $G$-action, and list various equivariant preliminaries regarding these categories. We then explain how, under the additional assumption that the $\infty$-category is suitably symmetric monoidal, the cotensor functor lifts to a functor taking values in a homotopy $G$-fixed points category. 

\begin{Def}
    Given a group $G$ and $\cat C \in \Fun(BG, \Cat_\infty)$, we can form 
    \[
    \cat C^{hG} := \lim_{BG}\cat C \in\Cat_\infty.
    \]
\end{Def}

\begin{Exa}
    If $G$ acts trivially on $\cat C$, then $\cat C^{hG} \simeq \Fun(BG,\cat C)$.
\end{Exa}

\begin{Rem}\label{rem:endring}
    If $\cat C\in\Fun(BG,\Pr^L_{\mathrm st})$, then the endomorphism ring of the unit object in $\cat C^{hG}$ is given by 
    \[
    \CAlg\ni\Map_{\cat C^{hG}}(\unit,\unit)\simeq \lim_{BG} \Map_{\cat C}(\unit,\unit)\simeq \Map_{\cat C}(\unit,\unit)^{hG}.
    \]
\end{Rem}

We record the following result from \cite[Proposition 6.7]{Glueingpaper} which in the case $G$ acts trivially on $\cat C$, states the familiar fact that the restriction functor admits a biadjoint (induction and coinduction).

\begin{Lem}\label{twistedinductioncoinduction}
Let $G$ be a finite group and $\cat C\in \Fun(BG, \Cat_\infty)$ whose underlying $\infty$-category is additive. For any subgroup $H \subseteq G$, the symmetric monoidal restriction functor
\[ 
\res_H^G\colon \cat C^{hG}\to \cat C^{hH}
\]
admits a left and a right adjoint, denoted respectively by $G \circledast_H (-)$ and $(-)^{\circledast_H G}$, and these two functors are equivalent. Furthermore, these adjunctions preserve dualizable objects.
\end{Lem}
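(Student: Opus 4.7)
The plan is to view the restriction functor as induced by the inclusion $\iota\colon BH \hookrightarrow BG$ via the contravariant assignment $\lim_{(-)}\cat C$. The essential observation is that $\iota$ has finite discrete homotopy fibers, each equivalent to the coset set $G/H$; this finiteness, combined with additivity of $\cat C$, is what drives every part of the statement.

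First, I would produce both adjoints as Kan extensions of $\iota$-indexed diagrams. The pointwise Kan extension formula in $\Cat_\infty$ expresses $\iota_{!}$ and $\iota_{*}$ fiberwise as $\coprod_{G/H}$ and $\prod_{G/H}$ respectively, both of which exist in $\cat C$ by additivity. These pointwise Kan extensions assemble to honest functors between the limit categories $\cat C^{hG}$ and $\cat C^{hH}$, yielding the desired adjunctions $G \circledast_H(-) \dashv \res_H^G \dashv (-)^{\circledast_H G}$.

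Second, for ambidexterity I would construct the canonical natural transformation from the left to the right Kan extension, the so-called norm map, and note that pointwise it is simply the universal comparison $\coprod_{G/H} X \to \prod_{G/H} X$ between a finite coproduct and a finite product of copies of $X$. In any semi-additive $\infty$-category this comparison is an equivalence, and every additive category is in particular semi-additive, so this gives the identification $G \circledast_H(-) \simeq (-)^{\circledast_H G}$.

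Third, for preservation of dualizable objects: $\res_H^G$ is symmetric monoidal and hence automatically preserves duals. For the common (co)induction, one can either invoke the projection formula $G \circledast_H (X \otimes \res_H^G Y) \simeq (G \circledast_H X) \otimes Y$, together with ambidexterity, to exhibit $G \circledast_H (X^\vee)$ as a dual of $G \circledast_H X$ whenever $X$ is dualizable in $\cat C^{hH}$; or one can invoke the pointwise biproduct formula together with the general principle that a finite biproduct of dualizable objects is again dualizable in any symmetric monoidal semi-additive $\infty$-category whose tensor product is biadditive in each variable. The main obstacle in turning this sketch into a full proof is the bookkeeping: making the symmetric monoidal structure visible through the Kan extensions, constructing the norm map globally as a natural transformation between functors of limit categories rather than just objectwise, and verifying the projection formula in the appropriate enriched symmetric monoidal setting. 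All of this is carried out carefully in the cited reference.
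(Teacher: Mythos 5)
The paper offers no proof of this lemma: it records the statement verbatim as a citation to Proposition~6.7 of the gluing-paper reference, so there is no argument in the paper itself against which to compare your attempt. That said, your sketch is the standard route and is accurate as an outline. You treat $\iota\colon BH\to BG$ as a finite covering with fiber the finite discrete set $G/H$, produce the two adjoints to $\res_H^G$ as relative left and right Kan extensions along $\iota$ (pointwise given by $\coprod_{G/H}$ and $\prod_{G/H}$, which exist by additivity of the underlying $\infty$-category), construct the norm map from the left to the right adjoint, and observe that pointwise it is the canonical comparison $\coprod_{G/H}X\to\prod_{G/H}X$, which is an equivalence by semi-additivity; this gives ambidexterity. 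For dualizability you correctly note that $\res_H^G$ is symmetric monoidal hence preserves duals automatically, and for (co)induction you give two valid arguments (projection formula plus ambidexterity, or the fact that dualizability is detected on the underlying object of $\cat C$, where (co)induction is a finite biproduct of dualizables). You are also appropriately candid that the real work — making the norm map a natural transformation of functors between the limit categories rather than just an objectwise comparison, threading the symmetric monoidal structure through the Kan extensions, and verifying the projection formula in this parametrized setting — is exactly what the cited reference carries out, which is consistent with the paper's decision to cite rather than reprove.
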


\begin{Rem}\label{rem-proj-formulas}
    We observe that we always have canonical projection maps
    \[
    (s)^{\circledast_H G}\otimes t \to (s \otimes \res^G_H t)^{\circledast_H G}
    \]
  for all $t$ and $s$. Moreover, it is formal that these projection maps are equivalences if $t$ is dualizable, see for instance~\cite[Lemma 3.8]{Glueingpaper}.
\end{Rem}

\begin{Rem}\label{rem-Mackey-dec}
  We have a Mackey formula, namely for $H,K\subseteq G$, there is an equivalence 
  \[
  \res_H^G (G\circledast_K -) \simeq \bigoplus_{g\in H\backslash G/K} H\circledast_{H\cap K^g} \res_{H\cap K^g}^{K} (-)
  \]
  where $K^g=gKg^{-1}$, and the restriction from $K$ to $H\cap K^g$ is along the conjugation map $H\cap K^g\subset K^g\cong K$. This is a consequence of \cite[Lemma 4.9]{CLS2023} with $S=T=BG$ and $\mathcal C$  the $BG$-category given by
\[ (*) \,\,\,  BG^{\mathrm{op}}\simeq BG\xrightarrow{A}\CAlg\xrightarrow{\Mod(-)} \Cat_\infty. \]
The reference shows that this is $BG$-cocomplete so in particular,  the Beck-Chevalley transformation associated to the pullback in $\cat S_{/BG}\simeq \mathrm{PSh}(BG)$
  \[
  \begin{tikzcd}
      \coprod_{g\in H\backslash G/K}G/(H \cap K^g) \arrow[r]\arrow[d] & BH \arrow[d]\\
      BK \arrow[r] & BG
  \end{tikzcd}
  \]
  is an equivalence. One also checks that the limit-preserving extension of $(*)$ to $\cat S_{/BG}$ sends $BH\to BG$ to $\Mod(A)^{hH}$.\\
  For example, when $K=e$ is the trivial subgroup, we find $$\res_H^G (G\circledast_e -) \simeq \bigoplus_{H \backslash G} (H\circledast_e -).$$
\end{Rem}

In the following construction we explain how cotensoring defines a symmetric monoidal functor of the form $\Fin\op\to \CAlg(\cat C)$, $X\mapsto \unit^X:= \prod_X \unit$.
This construction will be natural in symmetric monoidal $\infty$-categories $\cat C$ with finite products in which the tensor product preserves finite products in each variable, and finite product preserving symmetric monoidal functors between them. 
\begin{Cons}\label{con-contensor-functor}
Let $\Cat_{\infty}^\Pi$ be the $\infty$-category of $\infty$-categories with finite products and finite product-preserving functors between them. It acquires a Lurie tensor product $\otimes$, following a dual construction to \cite[Section 4.8.1]{HA}. That is, $\cat D\otimes \cat E$ is such that finite product-preserving functors out of it into $\cat F$ are the same as functors $\cat D\times \cat E\to \cat F$ that preserve finite products in each variable separately.\\ 
As in \cite[Remark 4.8.1.9]{HA}, commutative algebras in $(\Cat^\Pi_\infty,\otimes)$ are exactly symmetric monoidal $\infty$-categories with finite products where the tensor product commutes with finite products in each variable. 
By design, the internal hom in $(\Cat^\Pi_\infty,\otimes)$ is given by the category of finite product-preserving functors, and so $\Fin\op$ is the unit for this symmetric monoidal structure since it is freely generated by the point under finite products\footnote{Note that the finite products in $\Fin\op$ are finite coproducts in $\Fin$.}.\\
In particular, $\Fin\op$ is canonically and uniquely a commutative algebra in this category, and since the cocartesian symmetric monoidal structure\footnote{Given by cartesian products in $\Fin$.} is such a commutative algebra structure, it must be the relevant one. 
 Thus, there is a symmetric monoidal, product-preserving functor $\Fin\op\to \cat C$, natural in $\cat C\in\CAlg((\Cat^\Pi_\infty,\otimes))$, where $\Fin\op$ is equipped with the cocartesian symmetric monoidal structure. Since the forgetful functor induces a symmetric monoidal equivalence $\CAlg(\Fin\op)\simeq \Fin\op$, this further induces a natural symmetric monoidal functor 
 \[ \unit^\bullet \colon \Fin\op\to\CAlg(\cat C).\]
 Finally, we note that for any finite set $X$, the commutative algebra $\unit^X$  is a finite product of the unit object, so separable. Thus, the above functor lands naturally in $\CAlg^{\sep}(\cat C)$. If $\cat C$ is semi-additive, which it will be in the rest of the paper, it is also dualizable, and thus the functor lands in $\CAlg^{\sep}(\cat C^{\dual}).$\\ Informally, pullback along the diagonal of a finite set $X$ is a separable, commutative algebra structure
 \[
 \unit_\cat C^X\otimes\unit_\cat C^X\simeq\unit_\cat C^{X\times X}\to \unit_{\cat C}^X, 
 \]
natural in $\cat C\in\CAlg((\Cat_\infty^\Pi,\otimes))$, and satisfying $\unit_\cat C^{X\coprod Y}\simeq\unit_\cat C^X \times\unit_\cat C^Y$.
\end{Cons}

\begin{Cons}\label{con-cotensor-G-equiv}
Consider $\cat C \in \Fun(BG,\CAlg(\Cat_\infty^\Pi))$ which is in addition semi-additive. The naturality of \Cref{con-contensor-functor} endows the functor 
\[ \unit^\bullet \colon \Fin\op\to\CAlg^{\sep}(\cat C^{\dual})\]
with a $G$-equivariant structure with respect to the trivial $G$-action on the source, and the canonical action coming from $\cat C$ on the target. By taking $G$-homotopy fixed points we obtain the symmetric monoidal and finite product-preserving functor  
\[
(\unit^\bullet)^{hG} \colon\Fin_G\op\to\CAlg^{\sep}(\cat C^{\dual})^{hG}.
\]
By construction, for any subgroup $H\subseteq  G$ there is a natural comparison square: 
\[\begin{tikzcd}
	{\Fin_G\op} & {\CAlg^{\sep}(\cat C^{\dual})^{hG}} \\
	{\Fin_H\op} & {\CAlg^{\sep}(\cat C^{\dual})^{hH}}
	\arrow["{(\unit^\bullet)^{hG}}",from=1-1, to=1-2]
	\arrow["\res^G_H"',from=1-1, to=2-1]
	\arrow["\res_H^G", from=1-2, to=2-2]
	\arrow["{(\unit^\bullet)^{hH}}", from=2-1, to=2-2]
\end{tikzcd}\]
which is vertically right adjointable by \cite[Lemma 6.9]{Glueingpaper}. 
\end{Cons}

\section{Stable module categories and the Tate construction}
In this section we work with a commutative algebra $A\in\CAlg$ equipped with an action of a finite group $G$ and define three main categories 
\[
\Perf(A)^{hG} \quad \StModfin(AG) \quad \Perf(A)^{tG}
\]
which we will use throughout the paper. We start off with the first category.
\begin{Def}\label{def-hG-modA}
Let $G$ be a group and consider $A\in \Fun(BG, \CAlg)$. The category of $A$-modules inherits a $G$-action, yielding a functor
\[
\Mod(A)\colon BG \to \PrL.
\]
We can then form $\Mod(A)^{hG}$
which we think of as the category of $A$-modules endowed with a semi-linear $G$-action. 
We note that the functor $\Mod(A)$ naturally lifts to $\CAlg(\PrL)$ and so canonically $\Mod(A)^{hG}\in \CAlg(\PrL)$.
To connect this with the set up of the previous section, recall that the forgetful functor $\PrL\to \Cat_\infty$ preserves and creates limits, see proof of \cite[Lemma 2.6]{NaumannPol}.
\end{Def}

\begin{Rem}
    Using that taking dualizable objects commutes with limits (see \cite[Proposition 4.6.1.11]{HA}), we see that
    \[ \left( \Mod(A)^{hG}\right)^{\mathrm{dual}} \simeq \left(\Mod(A)^{\mathrm{dual}}\right)^{hG}\simeq\Perf(A)^{hG}.\]
\end{Rem}

\begin{Not}\label{not-AX}
    Let $G$ be a finite group and $A \in \Fun(BG, \CAlg)$. By applying \Cref{con-cotensor-G-equiv} to $\cat C=\Mod(A)$ and using \Cref{prop-sep-preserves-limits}, we see that the cotensor functor extends to a functor 
    \[
    A^\bullet\colon\Fin_G\op \to \CAlg^{\sep}(\Perf(A)^{hG}), \quad X \mapsto A^X.
    \]
By the discussion in the cited construction, for any subgroup $H \subseteq G$, there is an equivalence of commutative algebras 
    \[
    A^{G/H} \simeq  (\res ^G_H A)^{\circledast_H G}.
    \]
    These commutative algebras $A^X$ migrate to any symmetric monoidal stable $\infty$-category receiving $\Perf(A)^{hG}$, like the Tate category of \Cref{def-orbit-tate}.  
\end{Not}
We next introduce the stable module category following \cite{Krause20}.

\begin{Not}\label{not-stable-module-category}
    Let $G$ be a finite group and $A \in \CAlg$. We set
    \[
    \StModfin(AG):= \frac{\Fun(BG, \Perf(A))}{\thickt{A^{G/e}}}
    \in \Cat_{\infty}^{\mathrm{st}}
    \]
    and refer to it as the \emph{stable module category of $G$ with coefficients in $A$}. We also write \[ \pi \colon \Fun(BG, \Perf(A))\to \StModfin(AG)\]
    for the quotient functor. 
\end{Not}

\begin{Exa}
    If $A=k$ is a field, then $\Ho(\StModfin(kG))$ is equivalent to the stable module category of $kG$ which appears in modular representation theory. This follows from the fact that Verdier quotients commute with taking homotopy categories and \cite[Theorem 2.1]{Rickard}.
\end{Exa}

\begin{Rem}
    We warn the reader that the  stable module category is not idempotent complete in general. Our definition agrees with that of Mathew \cite{Mathew2015} up to idempotent completion. See \cite[Remark 4.3]{Krause20} for more details. 
\end{Rem}

\begin{Rem}
As the stable module category is defined as a Verdier quotient by a thick ideal, it inherits a symmetric monoidal structure from $\Fun(BG, \Perf(A))$, see for example~\cite[Theorem I.3.6]{Nikolaus-Scholze}. It follows that $\StModfin(AG) \in \CAlg(\Cat_\infty^{\mathrm{st}})$ in such a way that the canonical functor $\pi \colon \Fun(BG, \Perf(A))\to \StModfin(AG)$ is symmetric monoidal.
\end{Rem}

\begin{Rem}\label{rem:tatemap}
    Consider $X,Y \in \Fun(BG, \Perf(A))$. Then by \cite[Lemma 4.2]{Krause20} we find that 
    \[
    \Map_{\StModfin(AG)}(X,Y)\simeq \Map_{\Perf(A)}(X,Y)^{tG}\simeq\left(\mathbb D X\otimes Y\right)^{tG}.
    \]
    As in \Cref{rem:endring}, this identifies the endomorphism ring of the unit of $\StModfin(AG)$ with $A^{tG}\in\CAlg$.
\end{Rem}

We record the following result for future reference.

\begin{Lem}\label{lem-fully-faithful}
    Let $G$ be a finite group of order $g$, and suppose that $A \in\CAlg$ is bounded below. %and $\pi_*(A)$ is $g$-torsion free. 
    Then:
    \begin{itemize}
    \item[(a)] The canonical map $A^\wedge_g \to \prod_{0 < p | g} A_p^{\wedge}$, from the $g$-completion to the product of the $p$-completions, is an equivalence. 
    \item[(b)] The functor induced by base-change
    \[
    \beta\colon \StModfin(AG) \to \StModfin(A^{\wedge}_g G)\simeq  \prod_{p \mid g} \StModfin(A^{\wedge}_p G)
    \]
    is fully faithful.
    \end{itemize}
\end{Lem}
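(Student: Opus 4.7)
The plan is to handle the two parts separately, with the bounded-below hypothesis on $A$ entering crucially in each.

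For \emph{part (a)}, I would reduce to a splitting of Moore spectra: $\mathbb{S}/g^k \simeq \bigoplus_{p \mid g} \mathbb{S}/p^{k v_p(g)}$, obtained by lifting the Chinese Remainder idempotents of $\mathbb{Z}/g^k \cong \prod_{p \mid g} \mathbb{Z}/p^{k v_p(g)}$ to the underlying spectrum. Smashing with $A$ yields $A/g^k \simeq \bigoplus_{p \mid g} A/p^{k v_p(g)}$. Since $A$ is bounded below, both the $g$-completion and each $p$-completion are computed by the sequential limits $\lim_k A/g^k$ and $\lim_k A/p^k$. Passing to $\lim_k$ (using that finite sums commute with limits, and that $\{k v_p(g)\}_k$ is cofinal in $\{k\}_k$) then produces the desired equivalence $A^\wedge_g \simeq \prod_{p \mid g} A^\wedge_p$.

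For the product decomposition in \emph{part (b)}, I would combine (a) with the fact that $\Perf(-)$ and $\Fun(BG,-)$ preserve finite products of $\infty$-categories; the generator $(A^\wedge_g)^{G/e} \simeq \prod_{p \mid g}(A^\wedge_p)^{G/e}$ of the thick tensor ideal splits as a product, and the Verdier quotient decomposes accordingly. For fully faithfulness, I would use \Cref{rem:tatemap} to identify mapping spaces: for $X,Y \in \Fun(BG,\Perf(A))$ the comparison becomes the base-change map $(\mathbb{D}X \otimes Y)^{tG} \to ((\mathbb{D}X \otimes Y) \otimes_A A^\wedge_g)^{tG}$. Letting $F := \mathrm{fib}(A \to A^\wedge_g)$, the bounded-below hypothesis gives $A^\wedge_g/g \simeq A/g$, so tensoring the fiber sequence $F \to A \to A^\wedge_g$ with $\mathbb{S}/g$ shows $F/g \simeq 0$; that is, $g = |G|$ acts invertibly on $F$, and hence on $(\mathbb{D}X \otimes Y) \otimes_A F$. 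The proof then concludes by invoking the classical vanishing of the Tate construction on $G$-spectra on which $|G|$ acts invertibly, which makes the fiber of the base-change map Tate-acyclic.

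The main obstacle is the Tate-vanishing input. In the present setting, one can argue via a Postnikov-tower reduction to Eilenberg--MacLane spectra with $\mathbb{Z}[1/g][G]$-module coefficients, where Maschke's theorem expresses every such module as a direct summand of an induced one and the Tate construction vanishes on induced $G$-spectra; alternatively one can cite Nikolaus--Scholze. A minor technical point is the spectrum-level splitting of $\mathbb{S}/g^k$, which requires lifting idempotents despite $\mathbb{S}/g^k$ not being $\mathbb{E}_\infty$; however only a splitting of the underlying spectrum is needed, which the $\pi_0$ idempotents provide.
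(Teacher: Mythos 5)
Your proof of part (b) is correct but takes a genuinely different route from the paper's. You localize: setting $F = \fib(A \to A^\wedge_g)$, you observe $F/g \simeq 0$, so $g = |G|$ acts invertibly on $F$ and hence on $(\mathbb D X \otimes_A Y)\otimes_A F$, whose Tate construction therefore vanishes; the base-change map on Tate constructions then has trivial fiber. The paper instead completes: it cites Nikolaus--Scholze to the effect that the Tate construction of a bounded-below spectrum is $g$-complete, so both sides of the comparison map are $g$-complete and it suffices to check the map after $\otimes\,\mathbb S/g$, where it reduces to the evident equivalence $A/g \simeq A^\wedge_g/g$. These are two faces of the same coin (a $g$-complete object on which $g$ acts invertibly is zero), and both legitimately use the bounded-below hypothesis, but they invoke different standard facts: you use Tate-vanishing on $|G|$-invertible coefficients (which, as you note, can be seen via a Maschke/averaging argument or cited from Nikolaus--Scholze), the paper uses $g$-completeness of Tate constructions. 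Your route is perhaps slightly more self-contained in that the vanishing lemma is more classical; the paper's is slightly more efficient because it never names $F$.

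In part (a), your plan to reduce to Moore spectra matches the \emph{remark} that the paper gives after the lemma (reduce to $\mathbb S/n$, then deduce the equivalence after tensoring with $\mathbb Z$ via Hurewicz and the Chinese Remainder Theorem), rather than the five-lemma argument in the proof itself. However, the specific way you propose to get the splitting of $\mathbb S/g^k$ has a real gap: idempotents of $\pi_0(\mathbb S/g^k) \cong \bbZ/g^k$ are \emph{not} the same as idempotent endomorphisms of the spectrum $\mathbb S/g^k$, and only the latter produce a retract. To manufacture idempotents in $\pi_0\End(\mathbb S/g^k)$ from arithmetic you would need, e.g., that $g^{2k}$ annihilates $\mathbb S/g^k$ (so that $\bbZ \to \pi_0\End(\mathbb S/g^k)$ factors through $\bbZ/g^{2k}$ and CRT applies there), followed by an identification of the resulting summands --- this can be done but is more work than you suggest, and it is cleaner to avoid idempotents entirely. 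The simplest fix is exactly the paper's: write down the natural map $\mathbb S/ab \to \mathbb S/a \times \mathbb S/b$ for coprime $a,b$ directly (both projections are reduction maps from the map of cofiber sequences) and prove it is an equivalence on homotopy groups, or tensor with $\HZ$ and invoke Hurewicz. Once the splitting of $\mathbb S/g^k$ is in hand, the rest of your (a) --- smashing with $A$, passing to the limit over $k$, cofinality of $\{k\,v_p(g)\}_k$ --- is fine, and your product decomposition of $\StModfin$ in (b) matches the paper's.
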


\begin{proof}
Part (a) is true more generally for an arbitrary spectrum $X$ in place of $A$. For any abelian group $C$ and coprime integers $a,b\ge 1$ both of the group homomorphisms
\[ C/ab \stackrel{\mathrm{red}}{\longrightarrow}C/a\oplus C/b\,\,\mbox{ and }\,\, C[ab]\stackrel{(\cdot b,\cdot a)}{\longrightarrow}  C[a] \oplus C[b]\]
are isomorphisms.
The map of fiber sequences 
\[
\begin{tikzcd}
        X \arrow[d,"b"]\ar[r,"ab"] & X \ar[d,"\mathrm{id}"] \arrow[r] & X/ab\arrow[d]\\
        X\arrow[r,"a"] & X \ar[r] & X/a
\end{tikzcd}
\]
induces a map of short exact sequences
\[
\begin{tikzcd}
        0\ar[r] & (\pi_*(X))/ab\ar[d,"\mathrm{red}"]\ar[r]& \pi_*(X/ab)\arrow[r]\arrow[d] & (\pi_*(\Sigma X))[ab] \ar[r]\ar[d,"\cdot b"]\ar[r] & 0\\
         0\ar[r] & (\pi_*(X))/a\arrow[r]& \pi_*(X/a)\arrow[r] & (\pi_*(\Sigma X))[a] \arrow[r]\arrow[r] & 0.
\end{tikzcd}
\]
Reversing the roles of $a$ and $b$ induces a similar map of short exact sequences which combine into
\[\resizebox{\columnwidth}{!}{$\displaystyle
\begin{tikzcd}[ampersand replacement=\&]
0\arrow[r] \& (\pi_*(X))/ab\arrow[d,"\mathrm{red}"]\arrow[r] \& \pi_*(X/ab)\arrow[r]\arrow[d] \& (\pi_*(\Sigma X))[ab] \arrow[r]\arrow[d,"{(\cdot b,\cdot a)}"]\arrow[r] \& 0\\ 
        0\arrow[r] \& (\pi_*(X))/a\oplus (\pi_*(X))/b \arrow[r] \& \pi_*(X/a)\oplus\pi_*(X/b)\arrow[r] \& (\pi_*(\Sigma X))[a]\oplus (\pi_*(\Sigma X))[b] \arrow[r] \& 0.
\end{tikzcd}
$}
\]
Since the outer maps are isomorphisms by our initial observation, the map $X/ab \to X/a\times X/b$ is also an equivalence. 
One then obtains the claim by induction on the number of prime factors of $g$ which obviously starts with the case that $g$ is a prime power. In the induction step, one has $g=p^ng'$ for some prime $p$ not dividing $g'$, and can apply the above with $a=p^{nk}$ and
$b=(g')^k$ for an arbitrary $k\ge 1$, obtaining
\[ X/p^{nk} \times X/(g')^k\simeq X/g^k.\]
Passage to the limit over $k$ yields $X_p^{\wedge}\times X_{g'}^{\wedge}\simeq X_g^{\wedge}$, as desired.

The proof of (b) is essentially \cite[Lemma 5.1]{Krause20}. We recall the argument here for completeness. 
    We only show that the base-change map is fully faithful, the fact that the  stable module category splits as a product is an easy consequence of the fact that $A_g^{\wedge}$ itself splits. For this, by \Cref{rem:tatemap}, we need to check that the map 
    \begin{equation}\label{map}
     (\mathbb{D}X \otimes_A Y)^{tG} \to ((A_g^{\wedge} \otimes_A \mathbb{D}X )\otimes_{A_g^{\wedge}}(A_g^\wedge \otimes_A Y))^{tG}\simeq (A_g^\wedge \otimes_A\mathbb{D}X \otimes_{A} Y)^{tG}
    \end{equation}
    is an equivalence for all $X,Y\in\Fun(BG,\Perf(A))$. Recall that by \cite[Lemma I.2.9]{Nikolaus-Scholze}, the Tate construction of any bounded below spectrum is $g$-complete \footnote{The cited result is stated for $G=C_p$ but the same proof applies to any finite group $G$.}. Thus if $A$ is bounded below, this holds for any perfect $A$-module such as $\mathbb{D}X \otimes_A Y$.
    % multiplication by $g$ is null on $A^{tG}$ and so $A^{tG}[g^{-1}]\simeq 0$\ltodo{is this always true or need assumption on $A$?} \mtodo{This generally isn't true (cf. e.g. the sphere spectrum). It is true that if A is connective, then the tate construction is g-complete, and this is proved somewhere in NS - but it's not g-torsion in general}. It then follows that any module over $A^{tG}$ is automatically $g$-complete as these modules are characterized by being in the right orthogonal of $A^{tG}[g^{-1}]$. 
    
    In particular, the modules appearing in (\ref{map}) are $g$-complete, and so to verify that the map is an equivalence, we can do so after tensoring with $A^{tG}/g$. Using exactness of the Tate construction, this reduces to checking that the reduction mod $g$ of $A$ and $A^\wedge_g$ agree, which is clear.
\end{proof}

\begin{Rem} An alternative argument for the key point of  \Cref{lem-fully-faithful}(a) is to recall that every stable category is tensored over finite spectra, so one needs to see that for coprime integers $a$ and $b$, the map
\[ \mathbb S/ab \longrightarrow \mathbb S/a\times \mathbb S/b\]
constructed there is an equivalence. Since these spectra are connective, it suffices by the Hurewicz theorem to see that the above map in an equivalence after tensoring with $\mathbb Z$, which is true by the Chinese remainder theorem.    
\end{Rem}

Finally we introduce the last category of interest and explain how it relates to the stable module category.

\begin{Def}\label{def-orbit-tate}
Let $G$ be a finite group and $A\in \Fun(BG, \CAlg)$.
\begin{itemize}
    \item[(a)]  We define the \emph{homotopy $G$-{orbits} of $\Perf(A)$} to be
\[
\Perf(A)_{hG}:=\colim_{BG} \Perf(A)
\]
where the colimit is calculated in $\Cat_\infty^{\mathrm{perf}}$. 
\item[(b)] There is a canonical comparison map between the colimit and the limit 
\[
\mathrm{Nm}_G \colon \Perf(A)_{hG} \to\Perf(A)^{hG}
\]
which identifies with the norm map. We define the \emph{$G$-Tate of $\Perf(A)$} to be
\[
\Perf(A)^{tG}:=\cof(\Perf(A)_{hG} \xrightarrow{\mathrm{Nm}_G} \Perf(A)^{hG})\in \Cat_\infty^{\mathrm{perf}};
\]
the cofibre of the norm map in $\Cat_\infty^{\mathrm{perf}}$. Note that there is a canonical functor $q \colon \Perf(A)^{hG}\to \Perf(A)^{tG}$.
\end{itemize}
\end{Def}

We now record a more explicit description of the $G$-Tate of $\Perf(A)$. We denote by $(-)^\natural$ the idempotent completion functor.

\begin{Lem}\label{tate-description}
Let $G$ be a finite group and $A\in \Fun(BG, \CAlg)$. Then
\[
\Perf(A)^{tG}\simeq\left (\frac{\Perf(A)^{hG}}{\thickt{A^{G/e}}}\right)^\natural.
\]
\end{Lem}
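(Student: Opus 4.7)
The Tate category $\Perf(A)^{tG}$ is by definition the cofiber in $\Cat_\infty^{\mathrm{perf}}$ of the norm map $\mathrm{Nm}_G$. Cofibers in $\Cat_\infty^{\mathrm{perf}}$ are computed by first taking the Verdier quotient of the target by the thick subcategory generated by the essential image of the map, and then idempotent completing. The task thus reduces to showing that the thick closure of the essential image of $\mathrm{Nm}_G$ is precisely $\thickt{A^{G/e}}\subseteq \Perf(A)^{hG}$.

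Since $\Perf(A)_{hG}$ is a colimit in $\Cat_\infty^{\mathrm{perf}}$ of the $BG$-diagram with value $\Perf(A)$, it is generated as an idempotent complete stable subcategory by the image of the canonical functor $i\colon\Perf(A)\to \Perf(A)_{hG}$. Hence the thick closure of the image of $\mathrm{Nm}_G$ equals the thick closure of the image of the composite $\mathrm{Nm}_G\circ i\colon \Perf(A)\to \Perf(A)^{hG}$. The ambidexterity construction of the norm map is such that $\mathrm{Nm}_G\circ i$ identifies with the left adjoint $(-)^{\circledast_e G}$ of the restriction $\res^G_e\colon \Perf(A)^{hG}\to\Perf(A)$ produced in \Cref{twistedinductioncoinduction}. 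Applied to the generator $A\in \Perf(A)$, this gives $A^{\circledast_e G} = A^{G/e}$ by \Cref{not-AX}. Since $A$ generates $\Perf(A)$ as a thick subcategory and $(-)^{\circledast_e G}$ is exact, the thick closure of the image of $\mathrm{Nm}_G$ equals $\thick\langle A^{G/e}\rangle$. Finally, the projection formula of \Cref{rem-proj-formulas} gives $A^{G/e}\otimes Y \simeq (\res^G_e Y)^{\circledast_e G}$ for every $Y\in \Perf(A)^{hG}$, since every such $Y$ is dualizable. Thus $\thick\langle A^{G/e}\rangle$ is already a tensor ideal and coincides with $\thickt{A^{G/e}}$.

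The main technical point is the identification $\mathrm{Nm}_G\circ i \simeq (-)^{\circledast_e G}$. This encodes the defining property of the norm map for a finite-group action on the semi-additive $\infty$-category $\Cat_\infty^{\mathrm{perf}}$, and can either be cited from the literature on ambidexterity or verified by unpacking the cocone data for the colimit $\Perf(A)_{hG}$ together with the biadjointness between induction and coinduction of \Cref{twistedinductioncoinduction}. The rest of the argument is then essentially formal.
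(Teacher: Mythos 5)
Your proof is correct and takes essentially the same approach as the paper. The one difference is that where the paper simply cites \cite[Proposition 2.18]{CMNN2020} for the identification of the thick closure of the image of $\mathrm{Nm}_G$ with $\thick\langle A^{G/e}\rangle$, you derive it directly: $\Perf(A)_{hG}$ is generated under shifts, cofibers and retracts by the image of the cocone leg $i$, the composite $\mathrm{Nm}_G\circ i$ is the ambidextrous induction $(-)^{\circledast_e G}$, and $A$ generates $\Perf(A)$. This is a valid unpacking of that citation; the fact $\mathrm{Nm}_G\circ i\simeq(-)^{\circledast_e G}$, which you defer to the ambidexterity literature, is indeed the standard characterisation of the norm for a $BG$-diagram in a semi-additive $\infty$-category such as $\Cat_\infty^{\mathrm{perf}}$, so nothing essential is missing.
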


\begin{proof}
Since cofibres in $\Cat_\infty^\mathrm{perf}$ are given by the idempotent completion of the Verdier quotient, we need to check that the image of $\mathrm{Nm}_G$ is $\thickt{A^{G/e}}$. By \cite[Proposition 2.18]{CMNN2020} taking $\mathcal A=\Perf(A)\in\Fun(BG,\Cat_\infty^{\mathrm{perf}})$ there, we know that the image of the norm map agrees with $\thick(A^{G/e})$ which is automatically an ideal by an application of the projection formula (see \Cref{rem-proj-formulas}).
\end{proof}

\begin{Exa}\label{ex:idempotent}
    Suppose that $A\in \Fun(BG, \CAlg)$ with trivial $G$-action. Then $\Perf(A)^{tG}$ is the idempotent completion of $\StModfin(AG)$.
\end{Exa}

\begin{Rem}
    We note that $\Perf(A)^{tG}$ acquires a symmetric monoidal structure from $\Perf(A)^{hG}$ making the functor $q\colon \Perf(A)^{hG}\to \Perf(A)^{tG}$ into a symmetric monoidal functor, see discussion after \cite[Definition 2.16]{Mathew2016}. It follows that $\Perf(A)^{tG}\in \CAlg(\Cat_\infty^{\mathrm{perf}})$ and so $\Ind(\Perf(A)^{tG})\in\CAlg(\PrL)$.
\end{Rem}

\begin{Lem}\label{res-coinduction}
    Let $H$ be a subgroup of a finite group $G$ and $A\in\Fun(BG, \CAlg)$. Then the restriction-coinduction adjunction  descends to the Tate categories
    \[
     \res^G_H\colon \Perf(A)^{tG} \rightleftarrows \Perf(A)^{tH}: (-)^{\circledast_H G}
    \]
    Furthermore, these last functors induce adjoint pairs at the level of commutative algebra objects. 
\end{Lem}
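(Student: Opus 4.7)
The plan is to exploit the description in \Cref{tate-description}: $\Perf(A)^{tG}$ is the idempotent completion of the Verdier quotient $\Perf(A)^{hG}/\thickt{A^{G/e}}$, and similarly for $H$. Starting from the restriction--coinduction adjunction on homotopy fixed points provided by \Cref{twistedinductioncoinduction}, it suffices to check that both functors preserve the relevant thick tensor ideals; the adjunction will then descend formally, first to the Verdier quotients (units and counits survive the localization) and then to the idempotent completions, which is $2$-functorial on exact adjunctions.

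For restriction, symmetric monoidality reduces the task to verifying $\res^G_H(A^{G/e})\in\thickt{A^{H/e}}$, which is immediate from the Mackey formula of \Cref{rem-Mackey-dec} with $K=e$: $\res^G_H(A^{G/e})\simeq\bigoplus_{H\backslash G}A^{H/e}$. For coinduction, I would first rewrite a typical generator of $\thickt{A^{H/e}}$: for any $u\in\Perf(A)^{hH}$, the projection formula of \Cref{rem-proj-formulas} applied to the adjunction $\res^H_e\dashv(-)^{\circledast_e H}$, combined with the fact that $A$ is the unit object, yields
\[ A^{H/e}\otimes u\simeq(\res^H_e u)^{\circledast_e H}. \]
Transitivity of coinduction (a formal consequence of transitivity of restriction via passage to right adjoints) then gives $(A^{H/e}\otimes u)^{\circledast_H G}\simeq(\res^H_e u)^{\circledast_e G}$. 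Since $\res^H_e u$ is built from $A$ under finite cofibres and retracts in $\Perf(A)$, this object lies in $\thick(A^{G/e})\subseteq\thickt{A^{G/e}}$, and exactness of coinduction together with its preservation of retracts propagates this containment to the whole thick ideal.

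To upgrade the descended adjunction to commutative algebras, I would observe that both the quotient functor and the idempotent completion are symmetric monoidal, so $\res^G_H$ remains symmetric monoidal on the Tate categories. Its right adjoint $(-)^{\circledast_H G}$ then inherits a canonical lax symmetric monoidal structure, and such a symmetric monoidal/lax symmetric monoidal adjunction induces an adjunction on the corresponding $\infty$-categories of commutative algebras, which is exactly the second claim of the lemma.

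The main difficulty I anticipate is the verification that coinduction preserves the thick tensor ideal: one must treat arbitrary generators of the form $A^{H/e}\otimes u$ rather than merely the single generator $A^{H/e}$. The argument succeeds because iterated coinduction ``stacks'' the chain $e\subseteq H\subseteq G$ into a single coinduction from the trivial group to $G$, reducing the problem to the easy observation that $A$ generates $\Perf(A)$ under finite colimits and retracts.
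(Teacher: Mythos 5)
Your proof is correct and takes essentially the same route as the paper: verify that restriction and coinduction preserve the defining thick tensor ideals (using the Mackey and projection formulas), descend first to the Verdier quotients and then to the idempotent completions, and use the induced symmetric monoidal/lax symmetric monoidal structures to get the adjunction on commutative algebras. Your only departure is that you unfold the coinduction step—rewriting $A^{H/e}\otimes u$ via the projection formula and invoking transitivity of coinduction—whereas the paper deduces it more tersely from $A^{G/e}\simeq(A^{H/e})^{\circledast_H G}$, leaning on the identification $\thickt{A^{H/e}}=\thick(A^{H/e})$ already recorded in the proof of \Cref{tate-description}.
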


\begin{proof}
    We claim that $\res_H^G A^{G/e}\in \thickt{A^{H/e}}$. From this and the universal property of the Verdier quotient we see that the symmetric monoidal functor $\res^G_H\colon \Perf(A)^{hG}\to \Perf(A)^{hH}$ descends to a symmetric monoidal functor between the Tate categories. Since $A^{G/e}\simeq (A^{H/e})^{\circledast_H G}$ also the right adjoint passes to the Tate categories. It also follows that $(-)^{\circledast_H G}$ acquires a lax symmetric monoidal structure so the adjunction descends to commutative algebra objects. Therefore it is only left to prove the claim. By \Cref{twistedinductioncoinduction} we can instead verify that $\res_H^G( G \circledast_e A)\in \thickt{H \circledast_e A}$. By the Mackey formula (cf. \Cref{rem-Mackey-dec}), $\res_H^G (G\circledast_e A) \simeq \bigoplus_{G/H} (H\circledast_e A)$ and is therefore indeed in $\thickt{H\circledast_e A}$.
\end{proof}

\section{The indecomposable condition}

Let $\CC$ be a stable presentably\footnote{In fact, the reader can verify that additively symmetric monoidal and idempotent-complete is sufficient.} symmetric monoidal $\infty$-category with unit object $\unit$. Recall that an idempotent of a commutative algebra $A\in\CAlg(\CC)$ is by definition an idempotent element of the discrete ring $\pi_0(A):=\pi_0(\Map_{\CC}(\unit, A))$. We then say that $A$ is \emph{indecomposable} if $A$ is nonzero and if it does not decompose as a product of two nonzero rings. This is equivalent to $\pi_0(A)$ being an indecomposable ring. 
See \cite[Section 3]{NaumannPol} for more details.

\begin{Rem}\label{rem:idempotents}
Let $\CC$ be a stable presentably symmetric monoidal $\infty$-category and consider $R,S\in\CAlg(\CC)$ with $S$ indecomposable. %Then we have a disjoint union of mapping spaces of commutative algebras
%\[ \Map_{\CAlg(\CC)}(R,S)=\coprod_i\Map_{\CAlg(\CC)}(Re_i,S),\]
%where the $e_i\in\pi_0(R)$ run through a maximal set of pair-wise orthogonal idempotents summing to $1$.More generally,%
For any decomposition $R=\prod_{i=1}^n R_i$ into a finite product, 
we obtain
\[ 
\Map_{\CAlg(\CC)}(R,S)=\coprod_{i=1}^n\Map_{\CAlg(\CC)}(R_i,S).
\]
\end{Rem}

We next introduce one of our conditions which we require to prove our main result.

\begin{Def}\label{def-very-good}
    Let $G$ be a finite group and consider $A\in \Fun(BG, \CAlg)$. We say that $A$ satisfies the \emph{indecomposable condition} (in short IC) if the commutative algebras $A^{hH}$ and $A^{tH}$ are indecomposable for all subgroups $1\not= H \subseteq G$.
\end{Def}

We next give a few examples of commutative algebras satisfying this condition. 

\begin{Prop}\label{prop-IC-ring}
    Let $G$ be a finite group and let $A\in \Fun(BG, \CAlg^{\heartsuit})$ be a discrete commutative algebra with trivial $G$-action. 
    Then $A$ satisfies IC if and only if $A$ and $A/|H|$ are indecomposable for all $1 \not =H\subseteq G$.
\end{Prop}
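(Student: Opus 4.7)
By the discussion preceding \Cref{def-very-good}, an object of $\CAlg$ is indecomposable if and only if its $\pi_0$ is a nonzero ring with no nontrivial idempotents. Hence the strategy is to compute $\pi_0(A^{hG})$ and $\pi_0(A^{tG})$ as rings; the proposition will follow immediately.

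Since the $G$-action on the discrete ring $A$ is trivial, $A^{hG}$ is the mapping spectrum $\Map_{\Sp}(\Sigma^\infty_+ BG, A)$, and hence $\pi_0(A^{hG}) = H^0(BG; A) = A$ with its standard ring structure; so $A^{hG}$ is indecomposable if and only if $A$ is. For the Tate construction, I would take $\pi_0$ of the defining cofibre sequence $A_{hG} \to A^{hG} \to A^{tG}$: using $\pi_0(A_{hG}) = H_0(BG; A) = A$, $\pi_0(A^{hG}) = A$, and the vanishing $\pi_{-1}(A_{hG}) = 0$, the associated long exact sequence reduces everything to identifying the norm map $A \to A$ on $\pi_0$. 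This map is multiplication by $|G|$, as one sees cleanly from the Tate spectral sequence $\widehat{H}^{-s}(G; \pi_t A) \Rightarrow \pi_{t-s}(A^{tG})$, which collapses (since $A$ is concentrated in degree zero) and converges to yield $\pi_0(A^{tG}) = \widehat{H}^0(G; A) = A/|G|A$. Consequently $A^{tG}$ is indecomposable if and only if $A/|G|$ is, and combining the two equivalences yields the proposition.

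The only point requiring any care is the identification of the norm map as multiplication by $|G|$ on $\pi_0$; this is classical, but should one want a more direct argument one can recognize the norm for a trivial $G$-action as being induced by the Becker--Gottlieb transfer of $BG_+$, whose degree on $\pi_0$ is $|G|$. I do not expect any further obstacles.
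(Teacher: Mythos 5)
Your proof is correct and takes essentially the same route as the paper, which simply appeals to the homotopy fixed point and Tate spectral sequences to read off $\pi_0(A^{hG})=H^0(G,A)=A$ and $\pi_0(A^{tG})=\widehat H^0(G,A)=A/|G|$. Your extra detail (identifying $A^{hG}$ as $F(BG_+,A)$ and running the long exact sequence of the norm cofibre sequence to land on the cokernel of multiplication by $|G|$) is just a careful unpacking of the same spectral-sequence argument.
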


\begin{proof}
    Using the homotopy fixed points and Tate spectral sequences we see that 
    \[
    \pi_0(A^{hH})= H^0(H,A)=A \quad \mathrm{and} \quad \pi_0(A^{tH})=\widehat{H}^0(H,A)= A/|H|
    \]
    for all $H\subseteq G$.
\end{proof}

\begin{Exa}\label{exa-integers-IC}
    If $G$ is a $p$-group, then the integers $\bbZ$ and any field $k$ of characteristic $p$ satisfy IC with respect to the trivial $G$-action. 
\end{Exa}

\begin{Thm}\label{thm-IC}
 Let $G$ be a finite group and let $\mathbb{S}_G \in \Fun(BG,\CAlg)$ be the sphere spectrum with trivial $G$-action. Then $\mathbb{S}_G$ satisfies IC if and only if $G$ is a $p$-group for some prime number $p$.  \end{Thm}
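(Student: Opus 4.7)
The plan is to compute $\pi_0 \mathbb{S}_G^{hG}$ and $\pi_0 \mathbb{S}_G^{tG}$ via Segal's conjecture and the norm cofiber sequence, thereby reducing the question to a Burnside ring analysis in the spirit of \Cref{prop-IC-ring}. By Segal's conjecture (Carlsson), $\pi_0(\mathbb{S}_G^{hG}) \cong A(G)^\wedge_I$, the completion of the Burnside ring at the augmentation ideal $I$. Since $\mathbb{S}_{hG} \simeq \Sigma^\infty_+ BG$ has $\pi_0 = \mathbb{Z}$ and $\pi_{-1} = 0$, the long exact sequence of the norm cofiber sequence yields $\pi_0(\mathbb{S}_G^{tG}) \cong A(G)^\wedge_I / ([G/e])$, where the norm on $\pi_0$ sends $1$ to the transfer class $[G/e]$.

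Indecomposability of $\mathbb{S}_G^{hG}$ holds for every $G$: the kernel of the augmentation $A(G)^\wedge_I \twoheadrightarrow \mathbb{Z}$ lies in the Jacobson radical by $I$-adic completeness, so idempotents lift isomorphically from the idempotents $\{0,1\}$ of $\mathbb{Z}$. Hence this half of the IC never obstructs.

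For $\mathbb{S}_G^{tG}$ I would distinguish three cases. If $G$ is trivial, $\mathbb{S}_G^{tG} = 0$ is not indecomposable by convention. If $|G|$ has at least two distinct prime divisors, I invoke the fact that $\mathbb{S}_G^{tG}$ is $|G|$-complete (a direct extension of Nikolaus--Scholze's Lemma I.2.9, as used in the proof of \Cref{lem-fully-faithful}); then \Cref{lem-fully-faithful}(a) provides a splitting $\mathbb{S}_G^{tG} \simeq \prod_{p \mid |G|} (\mathbb{S}_G^{tG})^\wedge_p$ with each factor nonzero, since $\pi_0 \mathbb{S}_G^{tG} \twoheadrightarrow \mathbb{Z}/|G|$ has nontrivial reduction modulo each prime $p \mid |G|$. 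This gives a nontrivial product decomposition.

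Finally, when $G$ is a nontrivial $p$-group, $\mathbb{S}_G^{tG}$ is $p$-complete, so idempotents lift from $\pi_0(\mathbb{S}_G^{tG})/p = A(G)^\wedge_I / (p, [G/e])$. By Dress's theorem, the only $p$-perfect subgroup of a $p$-group is trivial, so $A(G)_{(p)}$ is local; consequently $A(G)/p = A(G)_{(p)}/p$ is local Artinian with nilpotent maximal ideal $I/p$, which in particular gives $A(G)^\wedge_I / p = A(G)/p$. The element $[G/e]$ lies in $I/p$ since $|G| \equiv 0 \pmod p$, and so the quotient $A(G)/(p, [G/e])$ is still a local ring with residue field $\mathbb{F}_p$, with only $\{0,1\}$ as idempotents. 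Lifting back gives indecomposability of $\pi_0 \mathbb{S}_G^{tG}$, and combining the three cases yields the theorem. I expect the main obstacle to be the careful invocation of Segal's conjecture together with the correct identification of the norm on $\pi_0$ with multiplication by $[G/e]$; the remainder is classical Burnside ring algebra via Dress's theorem together with the $|G|$-adic completeness of the Tate construction.
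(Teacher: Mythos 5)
Your computation of $\pi_0(\mathbb{S}_G^{hG}) \cong A(G)^\wedge_I$ and $\pi_0(\mathbb{S}_G^{tG}) \cong A(G)^\wedge_I/([G/e])$ via the Segal conjecture and the norm cofiber sequence is exactly what the paper does. But from there your indecomposability analysis takes a genuinely different route, and it is worth comparing. The paper's argument is purely about the ring $A(G)^\wedge_I$: the pair $(A(G)^\wedge_I, I)$ is henselian because it is $I$-adically complete, and the quotient $A(G)^\wedge_I/([G/e])$ is again $I$-adically complete (being finitely generated over a complete Noetherian ring), hence also henselian; idempotents therefore biject with those of the residue rings $A(G)^\wedge_I/I \cong \mathbb{Z}$ and $A(G)^\wedge_I/(I,[G/e]) \cong \mathbb{Z}/|G|$, and the Chinese Remainder Theorem finishes the job. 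You instead (i) for the decomposable direction split the \emph{spectrum} $\mathbb{S}_G^{tG}$ as a product of its $p$-completions using its $|G|$-completeness (which is a nice, stronger statement than the paper needs, and does go through even though $\mathbb{S}_G^{tG}$ is not bounded below, by the splitting $\mathbb{S}/n \simeq \prod_p \mathbb{S}/p^{v_p(n)}$); and (ii) for the $p$-group case pass to $\pi_0/p$ and invoke Dress's theorem that $A(G)_{(p)}$ is local. Both steps are correct, but they rely on heavier inputs that the paper avoids entirely: step (ii) implicitly needs that $p$-completeness of a spectrum forces $\pi_0$ to be derived $p$-complete and that a derived $p$-complete ring forms a henselian pair with $(p)$ — both true but nontrivial — while the paper needs only $I$-adic completeness and CRT, and never touches Dress. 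One genuine imprecision in your write-up: for indecomposability of $\mathbb{S}_G^{hG}$ you say idempotents lift because the augmentation ideal ``lies in the Jacobson radical''; that alone is not enough — lying in the Jacobson radical does not give idempotent lifting in general. What you actually need (and do have) is that the pair is henselian by $I$-adic completeness, so you should cite that rather than the Jacobson radical.
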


\begin{proof}
    We start by calculating $\pi_0(\mathbb{S}_G^{hG})$ and $\pi_0(\mathbb{S}_G^{tG})$. By the Segal conjecture, we have
    \[
    \pi_0(\mathbb{S}_G^{hG})=\pi_0(\mathbb{S}^{BG})=\widehat{A(G)}_I
    \]
    the completion of the Burnside ring of $G$ at the augmentation ideal $I$. For the Tate construction we look at the defining fiber sequence
\[ (\mathbb{S}_G)_{hG}\xra{N} \mathbb{S}_G^{hG}=\mathbb{S}^{BG}\to \mathbb{S}_G^{tG}.\]
Since $\pi_{-1}((\mathbb{S}_G)_{hG})=0$ and $\pi_0((\mathbb{S}_G)_{hG})=\mathbb Z$, we deduce that the ring homomorphism $ \pi_0(\mathbb{S}_G^{hG})\to \pi_0(\mathbb{S}_G^{tG})$
is surjective with kernel generated by $N(1)=[G] \in \widehat{A(G)}_I$. Hence
\[
\pi_0(\mathbb{S}_G^{tG})= \widehat{A(G)}_I/([G]).
\]
Next we check when these two rings are indecomposable. To this end, recall that for any henselian pair $(R,I)$, the reduction $R\to R/I$ induced a bijection on idempotents, because the $R$-algebra $R[X]/(X^2-X)$ classifying idempotents is \'etale, see \cite[Tag 09XI]{stacks-project}. This in particular applies to the $I$-adically complete ring $\widehat{A(G)}_I$. It also applies to the quotient ring $\widehat{A(G)}_I/([G]))$ as it is a finitely generated module over the complete Noetherian ring $\widehat{A(G)}_I$, hence also $I$-adically complete. We are thus reduced to checking when the two rings
\[ \widehat{A(G)}_I/I \quad \mathrm{and} \quad \widehat{A(G)}_I/(([G])), I) \]
are indecomposable. Since the first quotient is $\mathbb Z$, and $[G]-|G|\in I$, we are reduced to checking when the two rings $\Z$ and $\Z/|G|$ are indecomposable. By the Chinese Remainder Theorem this holds if and only if $G$ is a nontrivial $p$-group for some prime $p$. It is only left to note that IC always holds if $G$ is the trivial group. 
\end{proof}

\begin{Exa}
 For a finite group $G$, consider $KU \in \Fun(BG, \CAlg)$, the spectrum of topological $K$-theory with trivial $G$-action. Then $\pi_0(KU^{hG})$ is the completion of the complex representation ring $RU(G)$ at the augmentation ideal $I$. The observation about henselian pairs in the previous proof shows that the quotient map 
  \[
  RU(G)^{\wedge}_I \to RU(G)^{\wedge}_I/ I \simeq \Z 
  \]
  induces a bijection on idempotents. Thus $KU^{hG}$ is indecomposable for every finite group $G$. On the other hand \cite[Example I.2.3.(iii)]{Nikolaus-Scholze} shows that $KU^{tG}$ is not indecomposable if $G=C_{p^2}.$
\end{Exa}

\section{The retraction condition}

In this section we isolate the following condition:

\begin{Def}\label{def-retraction condition}
    Let $G$ be a finite group and  $A\in\Fun(BG,\CAlg)$. We say that $A$ satisfies the \emph{retraction condition for} $G$ 
    if for all $H\subseteq G$ such that $A$ is a retract of $A^{G/H}$ in $\Perf(A)^{tG}$, we have $H=G$. We say that $A$ satisfies the \emph{retraction condition} (in short RC)
    if for all subgroup $H\subseteq G$, the restricted commutative algebra $\res^G_HA$ satisfies the retraction condition for $H$.
    \end{Def}

In applications, we will use the following consequence of RC.
\begin{Rem}\label{rem-RC}
    Suppose that $A\in\Fun(BG,\CAlg)$ satisfies the retraction condition for $G$. Then $A$ is a retract of $A^{G/H}$ in $\Perf(A)^{hG}$ if and only if $H=G$. The ``if'' part is clear and for the ``only if'' part note that if $A$ is a retract of $A^{G/H}$ in $\Perf(A)^{hG}$, then we can apply the canonical functor $\Perf(A)^{hG}\to \Perf(A)^{tG}$ to this retraction diagram to find, by RC, that $H=G$.
\end{Rem}

This condition lets us prove the following result.

\begin{Thm}\label{thm-ic+rc}
    Let $G$ be a finite group and let $X,Y$ be finite $G$-sets. Suppose that the isotropy of $Y$ does not contain the trivial group. Let $A\in\Fun(BG,\CAlg)$ satisfy IC and RC. 
    Then both canonical maps
    \[
    \Map_{\Fin_G}(Y,X)\xrightarrow{\sim}\Map_{\CAlg(\Perf(A)^{hG} )}(A^X, A^Y) \xrightarrow{\sim} \Map_{\CAlg(\Perf(A)^{tG})}(A^X, A^Y)
    \]
    are equivalences.
\end{Thm}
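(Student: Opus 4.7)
The plan is to reduce the comparison to a coproduct of pointwise mapping spaces indexed by the orbit data of $X$ and $Y$, and then use the retraction condition to eliminate any would-be extra maps in the Tate category. First, I would decompose $Y$ into its $G$-orbits. Writing $Y = \coprod_j G/K_j$, the hypothesis that $Y$ has no trivial isotropy gives $K_j \neq e$ for all $j$. Since $A^Y \simeq \prod_j A^{G/K_j}$ as commutative algebras and mapping spaces into products decompose as products in $\CAlg$, this reduces the comparison to the case $Y = G/K$ for a single nontrivial subgroup $K$.

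For such a $K$, the coinduction adjunction $\res_K^G \dashv (-)^{\circledast_K G}$ descends to the Tate category and to commutative algebras by \Cref{res-coinduction}, compatibly with the symmetric monoidal functor $\Perf(A)^{hG} \to \Perf(A)^{tG}$. Using $A^{G/K} \simeq (\res_K^G A)^{\circledast_K G}$, the comparison map becomes
\[
\Map_{\CAlg(\Perf(A)^{hK})}(\res_K^G A^X, A) \longrightarrow \Map_{\CAlg(\Perf(A)^{tK})}(\res_K^G A^X, A).
\]
Further decomposing $X = \coprod_i G/H_i$ and applying the Mackey formula (\Cref{rem-Mackey-dec}) gives $\res_K^G A^X \simeq \prod_{i,\, g \in K \backslash G/H_i} A^{K/L_{i,g}}$ with $L_{i,g} := K \cap H_i^g$. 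By IC (\Cref{def-very-good}) applied to $\res_K^G A$, the unit $A$ is indecomposable as a commutative algebra in both $\Perf(A)^{hK}$ and $\Perf(A)^{tK}$, so \Cref{rem:idempotents} splits each mapping space as a coproduct over $(i, g)$. It thus suffices to show that
\[
\Map_{\CAlg(\Perf(A)^{hK})}(A^{K/L}, A) \longrightarrow \Map_{\CAlg(\Perf(A)^{tK})}(A^{K/L}, A)
\]
is an equivalence for each subgroup $L \subseteq K$ arising this way.

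If $L = K$, both sides equal $\Map_{\CAlg}(\unit, \unit) \simeq *$. For $L \subsetneq K$, I would argue both mapping spaces are empty: any algebra map $f \colon A^{K/L} \to A$ satisfies $f \circ \eta = \id_A$, where $\eta \colon A \to A^{K/L}$ is the unit map, and hence exhibits $A$ as a retract of $A^{K/L}$ in the ambient category. Applying RC (\Cref{def-retraction condition}) to $\res_K^G A$ directly in the Tate case, or via \Cref{rem-RC} in the homotopy fixed points case, forces $L = K$, a contradiction. The principal technical obstacle is verifying that the coinduction adjunction, the Mackey decomposition, and the idempotent splittings all lift coherently to the commutative algebra level and remain compatible with the comparison functor $\Perf(A)^{hK} \to \Perf(A)^{tK}$, so that each step faithfully tracks the comparison map we wish to analyze.
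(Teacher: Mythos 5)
Your proposal follows essentially the same route as the paper's proof: decompose $Y$ into orbits with nontrivial stabilizers, use the restriction–coinduction adjunction (\Cref{res-coinduction}) to move the question into $\Perf(A)^{hK}$ resp.\ $\Perf(A)^{tK}$ for a subgroup $K\ne e$, decompose $X$ (via the Mackey formula), split mapping spaces by indecomposability of the unit (\Cref{rem:idempotents}), and finish with RC and \Cref{rem-RC}. The paper phrases it by first establishing the special case $Y=G/G$ (which needs only IC and RC for $G$ itself) and then reducing the general case to it after restriction; your version just reorders these steps, and is otherwise the same argument.

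One detail worth flagging, though it is shared with the paper's own write-up: after restricting to $K=K_j$ you invoke ``IC applied to $\res_K^G A$'' and ``RC applied to $\res_K^G A$.'' As stated, \Cref{def-very-good} and \Cref{def-retraction condition} are conditions on the pair $(A,G)$, and neither is formally asserted to descend to the pair $(\res_K^G A, K)$ for proper nontrivial subgroups $K$ — one really needs $A^{hK}$ and $A^{tK}$ to be indecomposable and the retraction condition to hold over $K$. The paper acknowledges exactly this in its closing parenthetical ``(Note this argument uses that $A$ is indecomposable in the Tate category of each $H_i$),'' so you are reproducing the intended argument faithfully; but if you want the proof to be airtight from the stated hypotheses alone you should either verify that IC/RC descend to the relevant subgroups, or note that the applications (e.g.\ \Cref{cor:sep-in-p-group}) supply IC and RC for all the subgroups that arise.
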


\begin{proof}
We note that if $G=e$, then $Y=\emptyset$ and so $A^Y=0$. The canonical maps in the theorem are then equivalences as all mapping spaces involved are contractible. So without loss of generality we can assume that $G$ is nontrivial.
Firstly, let us treat the special case in which $Y=G/G$ so that $A^Y=A$. Decomposing $X$ into its orbits $X \simeq \coprod_i G/H_i$, we see that $A^X \simeq \prod_i A^{G/H_i}$. Note that the unit $A$ is indecomposable both in $\Perf(A)^{hG}$ and $\Perf(A)^{tG}$ by the assumption IC for $A$. Remark \ref{rem:idempotents} then implies that the relevant mapping space of commutative algebras in either category is given by 
\[
\Map(A^{X}, A) \simeq\coprod_i \Map(A^{G/H_i}, A),
\]
and the analogous result holds obiously for $\Fin_G$.
We are therefore reduced to showing that passage to the quotient category induces an equivalence on the mapping spaces $\Map(A^{G/H}, A)$, and these are equivalent to $\Map_{\Fin_G}(*,G/H)$. Indeed, we claim these mapping spaces are simultaneously empty or contractible, depending on $H$: If $H \not =G$, then we claim that both mapping spaces are empty as there are not even any unital maps from $A^{G/H}$ to $A$. This follows from the fact that $A$ satisfies RC and \Cref{rem-RC}.
If $H=G$, then both mapping spaces are contractible as the unit $A=A^{G/G}$ is the initial commutative algebra in both categories. We have therefore proved the theorem in the case that $Y=G/G$. 

For the general case, decompose $Y$ into orbits $Y\simeq \coprod_i G/H_i$, and note that by our assumption $H_i \not =e$ for all $i$. Then in all three categories we find that 
\[
\Map(A^X, A^Y)=\prod_i \Map(A^X, A^{G/H_i})=\prod_i \Map(\res^G_{H_i}(A^X), A),
\]
where we used that $A^{G/H_i}$ is coinduced, see \Cref{not-AX}.  Then the claim follows from the previous paragraph, observing that for each $i$, we have 
\[ 
\res^G_{H_i}(A^X)\simeq \prod_j A^{H/H_{ij}},\]
where
\[ 
\res^G_{H_i}(X)\simeq \coprod_j H_i/H_{ij}
\]
is the decomposition of the $H_i$-space into orbits. (Note this argument uses that $A$ is indecomposable in the Tate category of each $H_i$).
\end{proof}

We now list some examples of commutative algebra objects which satisfies RC.

\begin{Lem}\label{RC-for-com-rings}
    Let $G $ be a finite group and let $A\in \CAlg^{\heartsuit}$ be a $|G|$-torsion free discrete commutative ring in which no prime divisor of $|G|$ is invertible. Then $A\in \Fun(BG,\CAlg)$ with trivial $G$-action satisfies the retraction condition for $G$. In fact $A$ satisfies RC.
\end{Lem}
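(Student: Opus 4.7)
The plan is to assume for contradiction that there is a subgroup $H \subsetneq G$ such that $A$ is a retract of $A^{G/H}$ in $\Perf(A)^{tG}$, and derive that some prime divisor of $|G|$ must be invertible in $A$.

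First I would translate the topological retract into an algebraic one. Applying the functor $\pi_0\Map_{\Perf(A)^{tG}}(A,-)$, and using the restriction–coinduction adjunction descended to the Tate level (\Cref{res-coinduction}) together with the fact that $\res^G_H$ is symmetric monoidal, one computes
\[
\Map_{\Perf(A)^{tG}}(A,A) \simeq A^{tG}, \qquad \Map_{\Perf(A)^{tG}}(A, A^{G/H}) \simeq \Map_{\Perf(A)^{tH}}(A,A) \simeq A^{tH},
\]
where the latter acquires its natural $A^{tG}$-module structure through the restriction map $A^{tG} \to A^{tH}$. Passing to $\pi_0$ and reusing the Tate spectral sequence computation from the proof of \Cref{prop-IC-ring} (valid since $A$ is discrete with trivial $G$-action), the topological retract yields a retract of $A/|G|$ inside $A/|H|$ in the category of $A/|G|$-modules, where $A/|H|$ is viewed as an $A/|G|$-module through the canonical surjection $\pi\colon A/|G| \twoheadrightarrow A/|H|$ induced by the inclusion $(|G|) \subseteq (|H|)$.

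Next I would extract an algebraic contradiction by comparing annihilators. A direct summand decomposition $A/|H| \simeq A/|G| \oplus M$ of $A/|G|$-modules would force
\[
\mathrm{Ann}_{A/|G|}(A/|H|) \subseteq \mathrm{Ann}_{A/|G|}(A/|G|) = 0.
\]
On the other hand, $A/|H|$ is cyclic as an $A/|G|$-module, generated by $\bar 1$, so its annihilator equals $\ker(\pi)$, which is the image of $|H|\cdot A$ in $A/|G|$. Thus $|H|\cdot A \subseteq |G|\cdot A$ inside $A$. Since $A$ is $|G|$-torsion free, $|H|$ is a non-zero-divisor in $A$, and a short calculation shows this containment is equivalent to $[G:H]$ being a unit in $A$. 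But if $H \subsetneq G$ then $[G:H] > 1$ admits a prime divisor $p$, which necessarily divides $|G|$; by hypothesis $p$ is not invertible in $A$, hence neither is any multiple of $p$, contradicting that $[G:H]$ is a unit. Therefore $H = G$ and RC holds.

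The main obstacle I expect is the first step: the careful identification of $\Map_{\Perf(A)^{tG}}(A, A^{G/H})$ with $A^{tH}$ as an $A^{tG}$-module, together with the calculation of $\pi_0$ on both sides. Once this identification is in place, the remainder is a short exercise in commutative algebra via annihilators.
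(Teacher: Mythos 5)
Your proposal is correct and takes essentially the same route as the paper. The paper also applies $\pi_0\Map_{\Perf(A)^{tG}}(A,-)$ to the retraction, uses that $A^{G/H}$ is coinduced so that the target becomes $A/|H|$, and then derives $|H|\cdot 1_A \in |G|A$; the only cosmetic difference is that the paper phrases the commutative-algebra step by noting that $A/|G|$, as a retract of the $|H|$-torsion module $A/|H|$, must itself be $|H|$-torsion, whereas you phrase it via annihilators — both arguments produce the containment $|H|A \subseteq |G|A$ and then cancel $|H|$ by torsion-freeness to conclude that $[G:H]$ is invertible.
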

\begin{proof}
    Suppose that $A$ is a retract of $A^{G/H}$ in $\Perf(A)^{tG}$  for some $H \subseteq G$. We need to show that $H=G$. Applying \[ \pi_0\Map_{\Perf(A)^{tG}}(A,-) \colon \Perf(A)^{tG}\longrightarrow \Mod_{\pi_0(A^{tG})}(\Ab)=\Mod_{A/|G|}(\Ab)\] 
    to a retraction diagram and using \Cref{rem:endring} and the fact that $A^{G/H}$ is a coinduced module, we find that the $A$-module $A/|G|$ is a retract of $A/|H|$. Note that $A/|H|$ is $|H|$-torsion, therefore so is $A/|G|$. It follows that $|H|\cdot 1_A = 0$ mod $|G|$, so there is $a\in A$ such that $a [G:H]|H| = |H|\cdot 1_A$. The $|G|$- and hence $|H|$-torsion-freeness then implies that $a[G:H] = 1$, so $[G:H]$ is invertible in $A$. By assumption, this forces $|H|=|G|$ and thus $H=G$. 
\end{proof}

\begin{Lem}\label{emptymaps-connective}
   Let $G$ be a finite group and let $A\to B$ be a map in $\Fun(BG, \CAlg)$. Then RC for $B$ implies RC for $A$. 
\end{Lem}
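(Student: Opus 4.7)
The plan is to transport a hypothetical failure of RC for $A$ along the base-change functor to a failure of RC for $B$.

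First I would observe that the map $A\to B$ in $\Fun(BG,\CAlg)$ produces a $G$-equivariant symmetric monoidal colimit-preserving functor $B\otimes_A(-)\colon\Mod(A)\to\Mod(B)$ which restricts to $\Perf(A)\to\Perf(B)$. Taking homotopy $G$-fixed points yields a symmetric monoidal functor
\[
F\colon \Perf(A)^{hG}\longrightarrow \Perf(B)^{hG}
\]
sending the unit $A$ to $B$. By naturality of the cotensor construction in \Cref{con-contensor-functor}, $F$ carries $A^X$ to $B^X$ for every finite $G$-set $X$; in particular $F(A^{G/e})\simeq B^{G/e}$ lies in $\thickt{B^{G/e}}$. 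Using the universal property of Verdier quotients together with \Cref{tate-description} and passage to idempotent completions, $F$ descends to a symmetric monoidal functor
\[
\overline{F}\colon \Perf(A)^{tG}\longrightarrow \Perf(B)^{tG},
\]
which still sends $A^X$ to $B^X$.

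Now assume $A$ satisfies the hypothesis appearing in RC for some $H\subseteq G$, \emph{i.e.}, $A$ is a retract of $A^{G/H}$ in $\Perf(A)^{tG}$. Since retractions of objects are preserved by arbitrary functors, applying $\overline F$ yields a retraction diagram exhibiting $B$ as a retract of $B^{G/H}$ in $\Perf(B)^{tG}$. Invoking RC for $B$ then forces $H=G$, which proves that $A$ satisfies RC.

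There is no real obstacle: the argument is essentially formal once one verifies that base-change along $A\to B$ descends to the Tate category, and this is immediate from the fact that the canonical generator $A^{G/e}$ of the thick ideal being killed on the $A$-side is sent into the thick ideal being killed on the $B$-side.
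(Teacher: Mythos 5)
Your proposal is correct and takes essentially the same route as the paper. The paper's proof is a one-liner — it simply notes that naturality of the cotensor construction (\Cref{con-cotensor-G-equiv}) gives $B\otimes_A A^{G/H}\simeq B^{G/H}$ in $\Perf(B)^{tG}$ — and your write-up just fills in exactly the details that make this observation imply RC for $A$: base-change descends to the Tate categories, carries $A^X$ to $B^X$, and preserves retraction diagrams.
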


\begin{proof}
The naturality in $\cat C$ in \Cref{con-cotensor-G-equiv} implies $B\otimes_A A^{H/K}\simeq B^{H/K}$ in $\Perf(B)^{tH}$ for any subgroups $K \subseteq H \subseteq G$.
\end{proof}
\begin{Exa}\label{ex:RC-Z-S}
    Let $p$ be a prime number and $G$ be a finite $p$-group. It follows from \cref{RC-for-com-rings} that $\bbZ$ satisfies $RC$. Using this and \cref{emptymaps-connective} we deduce that $\mathbb{S}$ with trivial $G$-action satisfies $RC$ too. 
\end{Exa}

\section{Separably closed commutative algebras}

In this section we introduce the last condition required to prove our main result.

\begin{Def}\label{def-sep}
    We say that $A \in \CAlg$ is \emph{separably closed} if the cotensor functor is an equivalence 
        \[
         A^{(-)}\colon \Fin\op \stackrel{\simeq}{\longrightarrow} \CAlg^{\sep} (\Perf(A)).
        \]
        If $G$ is a finite group and $A \in\Fun(BG,\CAlg)$, then we say that $A$ is separably closed if the underlying commutative algebra $A\in \CAlg$ is separably closed.
\end{Def}

Let us give a few remarks on this definition. 
\begin{Rem}
    Any separably closed commutative algebra  $A$ is necessarily indecomposable. Indeed using that retracts of separable algebras are separable, we see that a nontrivial decomposition of $A$ would give a nontrivial decomposition of $\ast \in \Fin$, and so a contradiction.
    \end{Rem}

\begin{Rem}
    We warn the reader that our definition of separably closed commutative algebra is more restrictive than the one given by Rognes in \cite[Definition 10.3.1]{Rognes2008}. This is because any Galois extension is in particular separable, see \cite[Lemma 9.1.2]{Rognes2008}. To see that our definition is strictly stronger, we fix a separably closed field $k$ of characteristic $p$, and consider $A=k^{tE}$ for an elementary abelian $p$-group $E$. Then $A$ is separably closed in the sense of Rognes by work of Mathew~\cite[Theorem 9.9]{Mathew2016}. On the other hand by \cite[Theorem 2.30]{Mathew2016}, the $\infty$-category $\Perf(k^{tE})$ identifies with the small stable module category of $kE$, and in there we know that there are separable algebras which are not just products of $k$, for instance $k[E/E_0]$ for $e\not =E_0\subset E$, as one checks by contemplating support in the Balmer spectrum.
\end{Rem}

Let us give a few examples.

\begin{Lem}
    Let $S \in \CAlg$ be separably closed and let $R:=\infl_e^G S \in\CAlg(\Sp_G)$. Then, for all subgroups $K \subseteq G$, the geometric fixed points $\Phi^K R$ are separably closed. 
\end{Lem}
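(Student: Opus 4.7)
The plan is to reduce the statement directly to the hypothesis on $S$ by establishing $\Phi^K R \simeq S$ in $\CAlg$ for every subgroup $K \subseteq G$. Granted this identification, since separable closedness for $\Phi^K R \in \Fun(BW_G(K), \CAlg)$ refers only to the underlying $\CAlg$-object (\Cref{def-sep}), the assertion follows at once from the hypothesis.

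The content is therefore the equivalence $\Phi^K \circ \infl_e^G \simeq \id_{\Sp}$ of functors $\Sp \to \Sp$. I would argue in two steps. First, one factors $\Phi^K\colon \Sp_G \to \Sp$ through the restriction functor, $\Phi^K \simeq \Phi^K \circ \res^G_K$; combining this with the standard compatibility $\res^G_K \circ \infl_e^G \simeq \infl_e^K$ yields $\Phi^K(\infl_e^G X) \simeq \Phi^K(\infl_e^K X)$ for all $X \in \Sp$. This reduces the claim to the identity $\Phi^K \circ \infl_e^K \simeq \id_\Sp$. Second, both $\infl_e^K\colon \Sp \to \Sp_K$ and $\Phi^K\colon \Sp_K \to \Sp$ are symmetric monoidal left adjoints, so the composite is a symmetric monoidal left adjoint $\Sp \to \Sp$ sending the tensor unit to the tensor unit; since $\Sp$ is the initial object of $\CAlg(\PrL)$, this composite is canonically equivalent to $\id_\Sp$.

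The main obstacle is really just the bookkeeping of these standard equivariant identities. None is deep, but one needs the symmetric monoidal left-adjoint characterizations of $\infl_e^G$, $\res^G_K$, and $\Phi^K$, together with the compatibility of inflation from the trivial subgroup with restriction. Once the identification $\Phi^K R \simeq S$ is in place, the conclusion that $\Phi^K R$ is separably closed is immediate from \Cref{def-sep}.
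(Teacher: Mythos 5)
Your proposal is correct and follows essentially the same approach as the paper: the paper's proof consists of the single observation that $\Phi^K \infl_e^G S \simeq S$ for every $K \subseteq G$, which is precisely the identification you establish. You have merely supplied the standard bookkeeping (factoring through restriction and invoking initiality of $\Sp$ in $\CAlg(\PrL)$) that the paper leaves implicit.
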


\begin{proof}
    For every $K \subseteq G$, there is an equivalence $\Phi^K \infl_e^G S \simeq S$.
\end{proof}

\begin{Exa}\label{ex-sphere-sepclosed}
    For instance we can take $S= \mathbb{S}$ so that $R= \mathbb{S}_G$ or $S=\Z$ so that $R=\infl_e^G(\Z)$. The fact that the rings $S$ are separably closed follows from Minkowski's theorem, see~\cite[Proposition 10.5]{NaumannPol}.
\end{Exa}

Next, we record a consequence of this condition. 

\begin{Lem}\label{lem-separablyclosed-G}
    Let $G$ be a finite group and $A\in\Fun(BG, \CAlg)$ separably closed. Then the functor from \Cref{not-AX}
    \[
    A^{(-)}\colon \Fin\op_G \xrightarrow{\sim} \CAlg^{\sep} (\Perf(A)^{hG})
    \] 
    is an equivalence. 
\end{Lem}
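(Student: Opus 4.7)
The plan is to obtain the equivariant equivalence by taking $G$-homotopy fixed points of the non-equivariant equivalence guaranteed by the separable-closedness hypothesis. The key input is the naturality in \Cref{con-contensor-functor}: the cotensor functor
\[
A^{(-)} \colon \Fin\op \to \CAlg^{\sep}(\Perf(A))
\]
is natural in $A \in \CAlg$, so for $A \in \Fun(BG, \CAlg)$ it acquires a $G$-equivariant structure with trivial action on the source and the action induced by $A$ on the target. By \Cref{def-sep}, the underlying functor of $\infty$-categories is an equivalence, and since equivalences in $\Fun(BG, \Cat_\infty)$ are detected on underlying objects, this is automatically an equivalence in $\Fun(BG, \Cat_\infty)$.

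Next I would identify both sides after applying $(-)^{hG}$. Since $G$ acts trivially on $\Fin\op$, the fixed points compute as $(\Fin\op)^{hG} \simeq \Fun(BG, \Fin\op) \simeq \Fin_G\op$, the last equivalence coming from the fact that $\Fun(BG, \Fin)$ is the category of finite $G$-sets. On the target, \Cref{prop-sep-preserves-limits} yields $\CAlg^{\sep}(\Perf(A))^{hG} \simeq \CAlg^{\sep}(\Perf(A)^{hG})$. Applying $(-)^{hG}$ to the $G$-equivariant equivalence above and combining these identifications then produces the desired equivalence
\[
\Fin_G\op \xrightarrow{\sim} \CAlg^{\sep}(\Perf(A)^{hG}).
\]

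Finally I would verify that the functor so obtained coincides with the one from \Cref{not-AX}, but this is essentially the content of \Cref{con-cotensor-G-equiv}: the equivariant cotensor there is defined by exactly the above procedure, namely by taking $G$-homotopy fixed points of the naturality provided by \Cref{con-contensor-functor}. The only mild obstacle is bookkeeping: one must ensure that the symmetric monoidal and $G$-equivariant structures match through the chain of identifications, but this is handled by the naturality in $\cat C \in \CAlg((\Cat_\infty^\Pi, \otimes))$ established in \Cref{con-contensor-functor}.
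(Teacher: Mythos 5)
Your proposal is correct and follows essentially the same route as the paper's proof: the separably-closed hypothesis gives the underlying non-equivariant equivalence $\Fin\op \xrightarrow{\sim} \CAlg^{\sep}(\Perf(A))$, which one promotes to an equivalence in $\Fun(BG,\Cat_\infty)$ (conservativity of evaluation at the point of $BG$) and then passes to $G$-homotopy fixed points, identifying $(\Fin\op)^{hG}\simeq\Fin_G\op$ on the source and using \Cref{prop-sep-preserves-limits} to commute $\CAlg^{\sep}(-)$ past the limit on the target. You have simply spelled out the identifications that the paper's terse "take $G$-homotopy fixed points" leaves implicit, and correctly noted that the match with \Cref{not-AX} is automatic since that construction is defined by exactly this fixed-points procedure.
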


\begin{proof}
     Our assumption that $A$ is separably closed ensures that 
     \[
      A^{(-)}\colon \Fin\op \xrightarrow{\sim} \CAlg^{\sep}(\Perf(A))
     \]
     is an equivalence. The claim then follows from this by taking $G$-homotopy fixed points. 
\end{proof}

\begin{Not}\label{nota-groupoids}
Let $\cat F$ be a family of subgroups of a finite group $G$, i.e. a possibly empty set of subgroups of $G$, closed under sub-conjugation. We denote by $\Fin_{G}/\cat F$ the full subcategory of $\Fin_G$ spanned by those finite $G$-sets $X$ such that for all $x\in X$, the stabilizer subgroup Stab$_G(x)\subseteq G$ is not in $\mathcal F$. We also denote by $\Fin_{G}^{\mathrm{free}}$ the category of finite free $G$-sets.  To simplify notation we we will write 
    \begin{align*}
    \CFin_G & := \Fin_G^{\simeq} \\
    \CFin_G/\cat F & :=(\Fin_G/\cat F)^\simeq \\ \CFin_{G}^{\mathrm{free}} & :=(\Fin_{G}^{\mathrm{free}})^{\simeq}\\
    \end{align*}
    using caligraphic letters to indicate the maximal groupoid underlying a category. We note that the above groupoids canonically carry a cocartesian monoidal structure turning them into symmetric monoidal groupoids, or equivalently, into $\mathbb{E}_\infty$-monoids in spaces.
\end{Not}
\begin{Rem}\label{F-splitting}
     There is a symmetric monoidal (wrt disjoint union) equivalence of groupoids
    \begin{equation}\label{eq-decomposition}
    \CFin_G/\cat F \simeq \prod_{(H)\not \in \cat F} \CFin_{W_G(H)}^{\mathrm{free}}, \quad X \mapsto (\mathrm{Map}_G^{\mathrm{inj}}(G/H, X))_{(H)}
    \end{equation}
    where the product is taken over a set of representatives of $G$-conjugacy classes of subgroups $H \not \in \cat F$, and $\mathrm{Map}_G^{\mathrm{inj}}$ denotes the set of $G$-equivariant injective maps. 
\end{Rem}

\begin{Cons}\label{cons:modF}
      For any two families $\cat F\subseteq\cat F'$, the fully faithful inclusion $\iota \colon \Fin_G/\cat F'\subseteq\Fin_G/\cat F$
    admits a right adjoint (hence a colocalization) 
    \[ \pi\colon \Fin_G/\cat F\longrightarrow\Fin_G/\cat F'\]
    with counit 
    \[ \pi(X)=\{ x\in X\,|\, \mathrm{Stab}_G(x)\not\in\cat F'\} \subseteq X.\]

    We also write $X/\mathcal F$ for $\pi(X)$. 
\end{Cons}

\begin{Prop}\label{prop:decomp_Fin_G}
Let $G$ be a finite group, $\cat F$ a family of subgroups of $G$ and $K\subseteq G$ a subgroup such that 
$K\not\in\cat F$, but for all proper subgroups $K'\subsetneq K$ we have $K'\in\cat F$.
Then in the solid diagram
\[ \begin{tikzcd}
        \Fin_G/\cat F\arrow[r,"\pi"]\arrow[d, "(-)^K"'] & \Fin_G/(\cat F\cup K)\arrow[d, dotted, "\Phi"] \\
        \Fin_{W_G(K)} \arrow[r, "\pi'"] & \Fin_{W_G(K)}/\{ e\}
    \end{tikzcd}
\]
 there is a unique factorization $\Phi$ of $\pi'\circ (-)^K$ through $\pi$. In fact $\Phi$ is given by the composite 
 \[
 \Fin_G/(\cat F\cup K) \xrightarrow{\iota} \Fin_G/\cat F \xrightarrow{(-)^K} \Fin_{W_G(K)}\xrightarrow{\pi'} \Fin_{W_G(K)}/\{ e\}.
 \]
\end{Prop}
We note that while both horizontal functors have left adjoints, this square is generally not left adjointable. In fact, it is exactly when it is a pullback square, which happens when every subgroup has a strictly larger normalizer in $G$. It however follows from \Cref{F-splitting} that it is always a pullback when restricting to maximal subgroupoids.
\begin{proof}
Since $\pi$ is a colocalization, the claim about the factorization is equivalent to the functor $\pi'\circ(-)^K$
inverting the counit
\[ \iota\colon\pi(X)=\{ x\in X\,|\, \mathrm{Stab}_G(x)\not\in\cat F\cup K\} \subseteq X, \, X\in \Fin_G/\cat F.\]
Now, $\pi'(\iota^K)$ computes to be the inclusion
\[ \{ x\in X^K\,|\,  \mathrm{Stab}_G(x)\not\in\cat F\cup K\mbox{ and }\mathrm{Stab}_{W_G(K)}(x)\neq\{ e\}\} \subseteq \{ x\in X^K\, |\, \mathrm{Stab}_{W_G(K)}(x)\neq\{ e\}\},\]
which is immediately checked to be an equality. The resulting formula for $\Phi$ is a formal consequence of the above discussion. 
\end{proof}

Recall the functor $q$ of passage to the Tate-category from \Cref{def-orbit-tate}.
\begin{Lem}\label{factorizationgeneral}
    Let $G$ be a finite group and assume $A\in \Fun(BG, \CAlg)$ satisfies IC and RC. Then, there is a unique factorization $\iota$ 
    \[
    \begin{tikzcd}
        \CAlg^{\sep} (\Perf(A)^{hG})\arrow[r,"q"] & \CAlg^{\sep} (\Perf(A)^{tG}) \\
        \Fin\op_G\arrow[u,"A^{(-)}"] \arrow[r, "\pi"] & \left( \Fin_G/\{ e\} \right)\op \arrow[u, hook, dotted,"\mathrm{\iota}"']
    \end{tikzcd}
    \]
   and $\iota$ is fully faithful.
\end{Lem}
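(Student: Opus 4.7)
The plan is to pivot on the single observation that $A^{G/e}$ vanishes in the Tate category. Indeed, by \Cref{tate-description}, $\Perf(A)^{tG}$ is obtained from $\Perf(A)^{hG}$ by Verdier-quotienting out $\thickt{A^{G/e}}$ and then idempotent completing, so $A^{G/e} \simeq 0$ there. Since $A^{(-)}$ is symmetric monoidal and finite product preserving, it follows that for any free $G$-set $Y \simeq \coprod_i G/e$, the commutative algebra $A^Y \simeq \prod_i A^{G/e}$ is a product of zero objects, hence terminal in $\CAlg(\Perf(A)^{tG})$. Combined with the decomposition $A^{X \sqcup Y} \simeq A^X \times A^Y$ from \Cref{con-contensor-functor}, this shows that $q \circ A^{(-)}$ is insensitive to adding or removing a free summand.

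Under the symmetric monoidal equivalence of \Cref{F-splitting}, the projection $\mathrm{proj}\colon \prod_{(H)} (\CFin_{W_G(H)}^{\mathrm{free}})\op \twoheadrightarrow \prod_{e\neq (H)} (\CFin_{W_G(H)}^{\mathrm{free}})\op$ is exactly the operation of forgetting the free-$G$-set coordinate. The previous paragraph then produces the dotted arrow $\iota$ making the diagram commute. For uniqueness, I would note that $\mathrm{proj}$ admits a natural section $s$ (extending a tuple by $\emptyset$ in the $(e)$-slot); any factorization $\iota$ must then agree with $(q\circ A^{(-)})\circ s$, pinning it down.

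The remaining content is full faithfulness of $\iota$. Since $\CSep$ is the maximal subgroupoid of $\CAlg^{\sep}$, it is enough to check that for every pair $X, Y$ in the domain of $\iota$ — which, after unwinding the decomposition, means finite $G$-sets $X, Y$ all of whose isotropy subgroups are nontrivial — the canonical map
\[
\Map_{\CAlg(\Perf(A)^{hG})}(A^X, A^Y) \longrightarrow \Map_{\CAlg(\Perf(A)^{tG})}(A^X, A^Y)
\]
is an equivalence. This is precisely the content of \Cref{thm-ic+rc}, whose running hypothesis is exactly that the isotropy of $Y$ avoids the trivial subgroup, and which uses both IC and RC for $A$.

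I do not expect any genuine obstacle: once the vanishing of $A^{G/e}$ in the Tate category is in hand, the factorization is a bookkeeping exercise in the product decomposition \Cref{F-splitting}, and \Cref{thm-ic+rc} is tailor-made to deliver full faithfulness. The only place where care is required is verifying uniqueness of $\iota$ (via the section $s$) and checking that passing to maximal subgroupoids preserves the equivalence on mapping spaces — but both are immediate.
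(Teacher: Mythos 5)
Your proof is essentially the paper's. You correctly identify the pivotal fact — that $A^{G/e}$ becomes terminal in $\Perf(A)^{tG}$ by \Cref{tate-description} — and combine it with the decomposition in \Cref{F-splitting} to produce the factorization, then invoke \Cref{thm-ic+rc} for full faithfulness, exactly as the paper does. The one place where you diverge slightly is uniqueness: you argue via a section $s$ of $\mathrm{proj}$, which does pin down $\iota$ as a map, but the statement asserts uniqueness of the \emph{factorization}, i.e.\ of the pair consisting of $\iota$ together with the homotopy filling the square, and the section argument alone does not immediately control the latter. The paper instead observes that the product in the bottom-left corner is also a coproduct of $\mathbb{E}_\infty$-monoids (by preadditivity), so a factorization is the same as a nullhomotopy of $q\circ A^{(-)}$ restricted to the $(e)$-factor, and that nullhomotopy is unique because the component of $0$ in $\CSep(\Perf(A)^{tG})$ is contractible ($0$ being terminal). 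Your section argument is a reasonable alternative once you supplement it with the observation that the filling homotopy lives in a contractible space for the same reason; as written it is a hair short, but the idea is sound and everything else matches.
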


\begin{proof}To get the factorization, we need to see that $q\circ A^{(-)}$ inverts the counit
\[ \pi(X)=\{ x\in X\,|\, \mathrm{Stab}_G(x)\neq\{ e\}\}\subseteq X\,,\,X\in\Fin_G.\]
This is clear since $q(A^G)=0$.
%Since the product in the bottom left corner is a product in the preadditive $\infty$-category of $\mathbb E_\infty$-monoids in spaces, it is also a coproduct (see \cite[Proposition 2.3]{GGN}), and the indicated $\iota$ exists if and only if $q\circ A^{(-)}$ is zero on the factor corresponding to $(H)=e$.
%This follows from the fact that the top horizontal functor sends $A^{G/e}$ to a terminal object, see \Cref{tate-description}.
%Uniqueness follows from the fact that the nullhomotopy is unique: the component of $0$ in $\CSep (\Perf(A)^{tG})$ is contractible because $0$ is terminal. Alternatively, one can consider this as a square of semirings in spaces, where uniqueness is because projecting off a component amounts to inverting an idempotent.
Full faithfulness of $\iota$  follows from \Cref{thm-ic+rc}.
%and the fact that fully faithfulness is preserved by the groupoid functor. 
\end{proof}

\section{Pullback decomposition of equivariant spectra}
In this section we recall the pullback decomposition for equivariant spectra given in \cite[Section 7]{Glueingpaper} which will be the key tool for classifying separable algebras in this setting. Here we will be brief and refer the interested reader to the above reference for more details.
A similar pullback decomposition already appears in \cite[Theorem 3.11]{Krause20} for inflated commutative ring spectra; we refer the reader to the 
 introduction of \cite{Glueingpaper} for a more in depth discussion of the relation between these two results. 

Fix a finite group $G$ and a family $\cat F$ of subgroups of $G$. The case $\cat F=\emptyset$ is allowed here. Given a family $\cat F$ and a subset $S$ of the set of subgroups of $G$, we let $\cat F\cup S$ denote the smallest family containing $\cat F$ and $S$. We will exclusively be concerned with the case $S=\{K\}$ where $K \subseteq G$ is a subgroup such that $K\notin\cat F$, but for all proper subgroups $K'\subsetneq K$ we have $K'\in\cat F$.

We denote by $\Sp_{G}/\cat F$ the $\infty$-category of genuine $G$-spectra with isotropy outside $\cat F$. Specifically, this is the smashing localization of $\Sp_G$ corresponding to the idempotent commutative algebra $\widetilde{E}\cat F$ where we have omitted the suspension spectrum functor from the notation. The canonical map $\widetilde{E}\cat{F}\to\widetilde{E} (\cat F\cup K)$ induces a further localization
\[ 
\Sp_{G}/\cat F\to \Sp_G/\cat F\cup {K}
\]
which annihilates $ \widetilde{E}\cat{F} \otimes G/K_+$. 

\begin{Not}
Fix $R\in\CAlg(\Sp_G)$ and write 
\[
\Mod_{\Sp_G}(R) \quad \mathrm{and} \quad \Mod_{\Sp_G}(R)/ \cat F
\]
for the $\infty$-category of $R$-modules in $\Sp_G$ and $\Sp_{G}/\cat F$ respectively. More concretely, we have $\Mod_{\Sp_G}(R)/\cat F\simeq \Mod_{\Sp_G}(R \otimes  \widetilde{E}\cat F)$.
We denote by 
\[
L_{\cat F \cup K}\colon \Mod_{\Sp_G}(R)/\cat F\to \Mod_{\Sp_G}(R)/\cat F \cup K
\]
the induced smashing localization which annihilates $R \otimes \widetilde{E}\cat{F} \otimes G/K_+$. 
\end{Not}
Let $\Phi^K \colon \Sp_G \to \Sp$ denote the $K$-geometric fixed points functor, and let 
\[
\Phi^K_R\colon \Mod_{\Sp_G}(R)\to \Mod(\Phi^K(R))
\]
denote its $R$-linear extension. Recall that $\Phi^K(R)$ is acted on by the Weyl-group $W_G(K)$ of $K$ in $G$, allowing us to form 
\[
\Perf(\Phi^K(R))^{hW_G(K)}.
\]
As discussed in \cite[Proposition 7.7]{Glueingpaper}, the $R$-linear geometric $K$-fixed points functor factors through some refined variant $\widetilde\Phi_R^K$ which remembers the Weyl-group action, as follows:
\[
\Phi_R^K \colon \Mod_{\Sp_G}(R) \xrightarrow{\widetilde{\Phi}_R^K} \Mod(\Phi^K( R))^{hW_G(K)}\xrightarrow{\mathrm{fgt}} \Mod(\Phi^K(R)).
\]
Moreover, using that $\widetilde{\Phi}_R^{K}$ annihilates the kernel of $L_{\cat F \cup K}$, we get an induced functor 
\[
(\overline{\Phi}^K_R)^\omega \colon (\Mod_{\Sp_G}(R)/\cat F \cup K)^\omega \to \Perf(\Phi^K R)^{tW_G(K)}.
\]
Finally, recall the functor $q$ from \Cref{def-orbit-tate}.

\begin{Thm}[{\cite[Corollary 7.12]{Glueingpaper}}]\label{cor-pullback-perfect-G-spectra}
    Let $G$ be a finite group and $R\in \CAlg(\Sp_G)$. Let $\cat F$ be a family of subgroups of $G$ and fix a subgroup $K \subseteq G$ such that $K\notin\cat F$, but for all proper subgroups $K'\subsetneq K$ we have $K'\in\cat F$. Then the following is a pullback of symmetric monoidal stable $\infty$-categories:
\[
\begin{tikzcd}
    (\Mod_{\Sp_G}(R)/\mathcal{F})^\omega \arrow[d,"(\widetilde{\Phi}_R^K)^\omega"'] \ar[r,"L^\omega_{\mathcal{F}\cup K}"] 
   &   (\Mod_{\Sp_G}(R)/\mathcal{F}\cup K )^\omega\arrow[d, "(\overline{\Phi}^K_R)^\omega "]\\
    \Perf(\Phi^K R)^{hW_G(K)}\arrow[r, "q"]& \Perf(\Phi^K R)^{t W_G(K)}.
\end{tikzcd}
\]
\end{Thm}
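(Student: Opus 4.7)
The plan is to first establish the analogous pullback square at the level of the full presentable $\infty$-categories, and then descend to compact objects. The starting observation is that $L_{\cat F\cup K}$ is a smashing localization whose kernel is the localizing tensor ideal generated by $R\otimes \widetilde E\cat F \otimes (G/K)_+$. This produces a stable recollement
\[
\Ker(L_{\cat F\cup K}) \hookrightarrow \Mod_{\Sp_G}(R)/\cat F \twoheadrightarrow \Mod_{\Sp_G}(R)/\cat F\cup K.
\]
The decisive input is the identification
\[
\Ker(L_{\cat F\cup K})\simeq \Mod(\Phi^K R)^{hW_G(K)}
\]
in $\CAlg(\PrL)$, obtained via geometric fixed points. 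This identification uses the minimality hypothesis on $K$ outside $\cat F$ (so that $\Phi^{K'}$ vanishes on the kernel for all $K'\supsetneq K$), combined with a tom Dieck-style equivalence between $G$-spectra with isotropy concentrated at the conjugacy class $(K)$ and $W_G(K)$-equivariant objects over $\Phi^K(R)$, upgraded to the $R$-linear symmetric monoidal setting.

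Given such a recollement of presentably symmetric monoidal stable $\infty$-categories, standard Nikolaus--Scholze style gluing formalism identifies the middle term with a fiber product along a Tate-type gluing functor. Concretely, the right adjoint to $L_{\cat F \cup K}$ followed by $\widetilde\Phi^K_R$ factors canonically through $q\colon \Mod(\Phi^KR)^{hW_G(K)}\to \Mod(\Phi^KR)^{tW_G(K)}$, precisely because its image on the kernel lies in the thick subcategory annihilated by $q$. This presents $\Mod_{\Sp_G}(R)/\cat F$ as the pullback
\[
\Mod_{\Sp_G}(R)/\cat F \simeq \bigl(\Mod_{\Sp_G}(R)/\cat F\cup K\bigr) \times_{\Mod(\Phi^K R)^{tW_G(K)}} \Mod(\Phi^K R)^{hW_G(K)}
\]
in $\CAlg(\PrL)$, with the Tate comparison $q$ and the refined geometric fixed points $\overline\Phi^K_R$ as the structure maps on the right-hand side.

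It then remains to descend to compact objects. Each of the functors $L_{\cat F\cup K}$, $\widetilde\Phi^K_R$, and $\overline\Phi^K_R$ is a symmetric monoidal left adjoint whose right adjoint preserves filtered colimits (the target categories being compactly generated with unit a compact generator of the relevant ideal), hence each preserves compact objects. Under these conditions, the pullback square of presentable $\infty$-categories restricts to a pullback square of subcategories of compact objects, yielding the claimed pullback in $\CAlg(\Cat_\infty^{\mathrm{perf}})$.

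The hard part is the kernel identification in the first step: rigorously establishing the symmetric monoidal equivalence $\Ker(L_{\cat F\cup K})\simeq \Mod(\Phi^K R)^{hW_G(K)}$ while carefully tracking both the Weyl-group action and the $R$-linear symmetric monoidal structure is the technical core. Everything else — namely the recollement-to-pullback conversion and the descent to compact objects — is formal once this identification is in place.
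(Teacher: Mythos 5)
The statement under review is quoted directly from \cite[Corollary 7.12]{Glueingpaper}; the present paper supplies no proof of its own, so there is nothing to compare with on that side. Assessing your sketch on its own terms: the overall strategy — build a recollement from the smashing localization $L_{\cat F\cup K}$, identify its kernel via geometric fixed points, then glue and pass to compact objects — is the right shape. But the central identification you invoke is wrong, and the error is not cosmetic.

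You claim $\Ker(L_{\cat F\cup K})\simeq \Mod(\Phi^K R)^{hW_G(K)}$. That cannot be correct. The kernel consists of $R$-modules in $\Sp_G/\cat F$ that are $\cat F\cup K$-torsion, i.e.\ have geometric isotropy concentrated at the conjugacy class $(K)$. By the tom Dieck/Lewis--May--McClure style equivalence (which you gesture at but misquote), such spectra correspond to \emph{free} $W_G(K)$-objects over $\Phi^K R$, not to arbitrary semi-linear $W_G(K)$-objects. In other words, $\Ker(L_{\cat F\cup K})$ is sent by $\widetilde\Phi^K_R$ onto the localizing tensor ideal of $\Mod(\Phi^K R)^{hW_G(K)}$ generated by $(\Phi^K R)^{W_G(K)/e}$, which is \emph{exactly} $\Ker(q)$ (compare \Cref{tate-description}), and this is in general a proper localizing subcategory: already for $\Phi^K R$ a field of characteristic $p$ and $W_G(K)=C_p$, the quotient by it is the (nontrivial) stable module category.

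This matters structurally: your own displayed pullback square has the Tate category $\Mod(\Phi^K R)^{tW_G(K)}$ in the bottom-right corner, but if the kernel of $L_{\cat F\cup K}$ really were all of $\Mod(\Phi^K R)^{hW_G(K)}$, the gluing datum would instead factor through the homotopy fixed point category itself and no Tate quotient would appear. The Tate term appears \emph{precisely because} the kernel is the proper subcategory of free objects, and the pullback is the assertion that $\widetilde\Phi^K_R$ matches that subcategory with $\Ker(q)$ compatibly with the gluing functor. So the ``decisive input'' needs to be restated as: $\widetilde\Phi^K_R$ restricts to a (symmetric monoidal) equivalence $\Ker(L_{\cat F\cup K})\xrightarrow{\sim}\Ker(q)$, compatibly with the identification of the two quotients via $\overline\Phi^K_R$. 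Even once this is fixed, the conversion from ``recollement with matching kernels'' to ``honest pullback'' is not purely formal — one must check that the gluing functors on either side agree, which is the point of the careful bookkeeping in \cite{Glueingpaper} — so the claim that everything beyond the kernel identification ``is formal'' also understates the work involved. The descent to compact objects is fine in spirit, given compact generation and the unit-compactness of the relevant ideals.
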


\begin{Cor}\label{thm-pullback-stmodfin}
     Let $G$ be a finite group, $\cat F$ a family of subgroups of $G$ and let $K\subseteq G$ be a subgroup such that $K \not \in \cat F$ and $K'\in \cat F$ for all $K'\subsetneq K$. Suppose we are given $R \in \CAlg(\Sp_G)$ such that $\Phi^K(R)$ is bounded below and $W_G(K)$ acts trivially on $\Phi^K(R)$. Then the following is a pullback of symmetric monoidal stable $\infty$-categories
     \[
    \begin{tikzcd}
        (\Mod_{\Sp_G}(R)/\cat F)^\omega \arrow[r,"L_{\cat F \cup K}^\omega"] \arrow[d,"(\widetilde{\Phi}_R^K)^\omega"'] & (\Mod_{\Sp_G}(R)/\cat F\cup K)^\omega\arrow[d,"\beta_0 \circ (\overline{\Phi}^K_R)^\omega"] \\
       \Fun(BW_G(K),\Perf(\Phi^K(R))) \arrow[r,"\beta_0 \circ  q"] & \prod_{0 < p \mid |W_G(K)|}\Perf(\Phi^K(R)^{\wedge}_p)^{t W_G(K)}
    \end{tikzcd}
    \]
    where $\beta_0$ is induced by the base-change functors.
\end{Cor}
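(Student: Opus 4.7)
The plan is to deduce this directly from \Cref{cor-pullback-perfect-G-spectra}. Writing $A := \Phi^K(R)$ and $N := W_G(K)$ for brevity, I would first specialize the pullback of that theorem to the case where $N$ acts trivially on $A$. Under this hypothesis, the homotopy fixed points identify as $\Perf(A)^{hN} \simeq \Fun(BN, \Perf(A))$, which matches the bottom-left of the target square. By \Cref{tate-description}, the Tate category $\Perf(A)^{tN}$ is the idempotent completion of the stable module category $\StModfin(AN)$, and the canonical comparison $q$ factors as the Verdier quotient $\pi$ followed by the idempotent-completion inclusion.

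Next I would invoke \Cref{lem-fully-faithful}, which applies because $A$ is bounded below by hypothesis: it provides a fully faithful base-change functor
\[
\beta \colon \StModfin(AN) \hookrightarrow \prod_{0 < p \mid |N|} \StModfin(A_p^\wedge N).
\]
Since full faithfulness is preserved by idempotent completion, $\beta$ extends to a fully faithful functor $\tilde\beta \colon \Perf(A)^{tN} \hookrightarrow \prod_p \StModfin(A_p^\wedge N)$, where we implicitly idempotent-complete the target whenever necessary. By construction, $\tilde\beta \circ q = \beta \circ \pi$, and $\tilde\beta \circ (\overline{\Phi}^K_R)^\omega$ recovers the right-vertical map $\beta \circ (\overline{\Phi}^K_R)^\omega$ appearing in the statement.

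To conclude, I would apply the general categorical principle that a pullback square is preserved after replacing one vertex along a fully faithful functor: if $P = X \times_Z Y$ and $j \colon Z \hookrightarrow Z'$ is fully faithful, then $P = X \times_{Z'} Y$ with the composite maps to $Z'$. This holds because $\Map(T,Z) \to \Map(T,Z')$ is fully faithful for every test $\infty$-category $T$, so that a homotopy pullback along $j$ in spaces agrees with the pullback computed inside the essential image. Applying this with $Z = \Perf(A)^{tN}$ and $Z' = \prod_p \StModfin(A_p^\wedge N)$ yields the claimed pullback of symmetric monoidal stable $\infty$-categories.

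The main subtlety, and likely the most delicate step, is the bookkeeping around idempotent completion: the target product is not a priori idempotent complete, so one must interpret $\tilde\beta$ carefully and verify that it remains fully faithful into the stated target. Once this is secured, the rest of the argument is a purely formal application of the pullback-preservation principle above, with no further equivariant input beyond what is already encoded in \Cref{cor-pullback-perfect-G-spectra}.
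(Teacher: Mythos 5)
Your overall strategy --- replace a vertex of the pullback from \cref{cor-pullback-perfect-G-spectra} along a fully faithful functor and appeal to the formal principle that such a replacement preserves the limit --- is exactly the mechanism the paper uses. The difference is the choice of starting square, and that difference is precisely what creates the gap you yourself flag at the end.

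The paper does not start from \cref{cor-pullback-perfect-G-spectra} (which is {\cite[Corollary 7.12]{Glueingpaper}}, with $\Perf(\Phi^K R)^{tW_G(K)}$ in the corner); it cites {\cite[Corollary 7.13]{Glueingpaper}}, a refined variant of that pullback valid under the trivial-action hypothesis, whose bottom-right vertex is $\StModfin((\Phi^K R)\, W_G(K))$ \emph{before} idempotent completion, with right-vertical map already landing there. From that square one simply composes with $\beta$, which is fully faithful by \cref{lem-fully-faithful}, and one is done --- no idempotent-completion bookkeeping arises because the corner was never completed. Your route starts from the completed Tate category $\Perf(A)^{tN}\simeq(\StModfin(AN))^{\natural}$ and therefore needs the extended functor $\tilde\beta$ out of an idempotent-complete source to land in the \emph{non}-complete product $\prod_p \StModfin(A_p^\wedge N)$. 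That is not automatic: nothing guarantees that idempotent splittings of objects in the image of $\beta$ stay inside the product rather than its completion, and your ``implicitly idempotent-complete the target whenever necessary'' would change the statement. So the step you call a subtlety is a genuine missing piece, not merely bookkeeping. The clean fix is to start from the un-completed pullback of {\cite[Corollary 7.13]{Glueingpaper}} (as the paper does), or equivalently to argue directly that $(\overline{\Phi}^K_R)^\omega$ takes values in $\StModfin(AN)\subset\Perf(A)^{tN}$ under the trivial-action hypothesis; once that is in place your fully-faithful-replacement argument closes correctly.
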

\begin{proof}
    We claim that the base-change functors assemble into a fully faithful functor
    \[
    \beta_0 \colon \Perf(\Phi^K R)^{tW_G(K)} \to \prod_p \Perf((\Phi^K R)^\wedge_p)^{tW_G(K)}.
    \]
    Indeed the above functor can be obtained from \cref{lem-fully-faithful}(b) by applying the idempotent completion functor, and by identifying the result with the Tate categories via \cref{ex:idempotent}. We observe that the square in the corollary differs from \cref{cor-pullback-perfect-G-spectra} by the bottom right corner. Since we have a fully faithful functor relating the two different vertices, we obtain the pullback of the two squares are equivalent, proving the corollary.
\end{proof}

\section{The classification result}

Given a finite $G$-set $X$, we can form the commutative algebra $\mathbb{D}(\Sigma_+^\infty X) \in \CAlg(\Sp_G)$. Since separability can be checked in the homotopy category, it follows from \cite[Theorem 1.1]{Balmer_Ambrogio_Sanders} that the commutative algebra $\mathbb{D}(\Sigma_+^\infty X)$ is separable in $\Sp_G$. Given $R \in\CAlg(\Sp_G)$, there is a base change functor 
\[
R \otimes - \colon\Sp_G \to \Mod_{\Sp_G}(R)
\]
which is symmetric monoidal. We let $R^X$ denote the image of $\mathbb{D}(\Sigma_+^\infty X)$ under this functor, and observe that $R^X$ is a separable commutative algebra which is compact as an $R$-module. We therefore obtain a functor
\begin{equation}\label{betterfunctor}
R^{(-)}\colon \Fin\op_G \to \CAlg^{\sep}(\Mod_{\Sp_G}(R)^\omega).   
\end{equation}
More generally, given a family $\cat F$ of subgroups of $G$, we have a functor $R^{(-)}_{/\cat F} $:

\begin{equation}\label{eq:betterfunctor}
(\Fin_G/\cat F)\op \subseteq\Fin_G\op\stackrel{R^{(-)}}{\longrightarrow}   \CAlg^{\sep} (\Mod_{\Sp_G}(R)^\omega)\longrightarrow\CAlg^{\sep} ((\Mod_{\Sp_G}(R)/\cat F)^\omega), 
\end{equation}
\[ X \mapsto R \otimes \widetilde{E}\cat F \otimes \mathbb{D}(\Sigma_+^\infty X). \]
Note that $\eqref{eq:betterfunctor}$ generalizes $\eqref{betterfunctor}$ because $R^{(-)}_{/\emptyset}=R^{(-)}$.
We now give conditions under which this functor is fully faithful.

\begin{Prop}\label{prop-fully-faulful}
 Let $G$ be a finite group and $\cat F$ a family of subgroups of $G$. Consider $R \in\CAlg(\Sp_G)$ and suppose that for all subgroups $H \subseteq G$ with $H \not \in \cat F$, the commutative ring $\pi_0^H(R)$ is indecomposable, and $\pi_0(\Phi^H R)$ is nonzero. Then the functor \eqref{eq:betterfunctor} is fully faithful.
\end{Prop}
\begin{proof}
 We argue essentially as in the proof of \cref{thm-ic+rc}, namely we work by induction on $G$. 
We consider the map 
\[
\Map_{\Fin_G/\cat F}(Y,X)\to \Map_{\CAlg(\Mod_{\Sp_G}(R)/\cat F)}(R^X,R^Y)
\]
which we want to show is an equivalence. Since our functor sends coproducts to products, we may assume without loss of generality that $Y$ is an orbit, say $G/H$, with $H\notin \cat F$.  We then have compatible equivalences  $$\Map_{\Fin_G/\cat F}(Y,X)\simeq \Map_{\Fin_H/\cat F_{ H}}(*, \res^G_H(X))$$ 
and 
$$\Map_{\CAlg(\Mod_{\Sp_G}(R))/\cat F)}(R^X,R^Y)\simeq \Map_{\CAlg(\Mod_{\Sp_H}(R)/\cat F_H)}(R^{\res^G_H(X)},R)$$

where $\cat F_H$ is the family of subgroups of $H$ in $\cat F$. We may assume that $\res^G_H(X)$ has no isotropy in $\cat F_H$ since if $X_0\subset X$ is the subset of points with isotropy \emph{not} in $\cat F_H$, the map $\Map_{\Fin_G}(Y,X_0)\to \Map_{\Fin_G}(Y,X)$
is a bijection (any point in the image of a map $Y\to X$ has isotropy conjugate to a group containing $H$, and $H$ is not in $\cat F$). 
Thus, by induction we are reduced to the case $Y=*$. Now, since $\pi_0^G(R)$ is indecomposable, we may also assume that $X$ is an orbit (since coproducts of $X$'s pull out on either side to coproducts). Now if $X=G/G$, both sides are contractible since $R$ is the unit in $\Mod_{\Sp_G}(R)$, and if $X=G/H$ for $H$ a proper subgroup, both sides are empty, since $\Phi^G(R^X) = 0$ and $\Phi^G(R)$ is nonzero. 
\end{proof} 
In order to discuss when the functor \eqref{eq:betterfunctor} is essentially surjective, it will be useful to consider the induced functor on groupoids
\begin{equation}\label{eq:standard}
R^{(-)}_{/\cat F} \colon (\CFin_G/\cat F)\op\longrightarrow\CSep ((\Mod_{\Sp_G}(R)/\cat F)^\omega), 
\end{equation}
\[ X \mapsto R \otimes \widetilde{E}\cat F \otimes \mathbb{D}(\Sigma_+^\infty X). \]

\begin{Ter}
We refer to the separable algebras in the essential image of (\ref{eq:standard}) as {\em standard}.
\end{Ter}

Now assume that $K\subseteq G$ is a subgroup such that $K \not \in \cat F$ and $K'\in \cat F$ for all $K'\subsetneq K$. Recall from \cref{cons:modF} that the canonical inclusion $i_K \colon \CFin_{G}/\cat F \cup K \to \CFin_G/\cat F$ admits a right adjoint at the level of categories, and hence we get a retraction at the level of groupoids: 
\[
\pi_K \colon  \CFin_G/\cat F\to \CFin_G/\cat F\cup K 
\]
Definitionally, it sends the orbit $G/K$ to the empty set. 
% Now assume that $K\subseteq G$ is a subgroup such that $K \not \in \cat F$ and $K'\in \cat F$ for all $K'\subsetneq K$. We note that the canonical inclusion $i_K \colon \CFin_{G}/\cat F \cup K \to \CFin_G/\cat F$ admits a retraction 
% \[
% \pi_K \colon  \CFin_G/\cat F\to \CFin_G/\cat F\cup K 
% \]
% which, using the decomposition given in \Cref{F-splitting}, sends the orbit $G/K$ to the empty set. The next result illustrates how the functors $R^{(-)}_{/\cat F}$ and $R^{(-)}_{/\cat F \cup K}$ relate to one another.

\begin{Lem}\label{lem-top-face-cube}
    In the situation above, we have $R_{/\cat F \cup K}^{(-)}\simeq L_{\cat F \cup K} R_{/\cat F}^{i_K(-)}$ and the following diagram commutes
    \[
    \begin{tikzcd}
        \CFin_G\op/\cat F \arrow[d,"R_{/\cat F}^{(-)}"']\arrow[r,"\pi_K\op"] & \CFin_G\op/\cat F \cup K \arrow[d,"R^{(-)}_{/ \cat F \cup K}"] \\
        \CSep((\Mod_{\Sp_G}(R)/\cat F)^\omega) \arrow[r,"L_{\cat F \cup K}"] & \CSep((\Mod_{\Sp_G}(R)/\cat F \cup K)^\omega).
    \end{tikzcd}
    \]
\end{Lem}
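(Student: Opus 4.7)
The proof has two parts: establishing the equivalence of functors $R^{(-)}_{/\cat F \cup K} \simeq L_{\cat F \cup K} \circ R^{(-)}_{/\cat F} \circ i_K^{\mathrm{op}}$, and then checking commutativity of the square.

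For the first part, my plan is to unfold the definitions. Given $X \in \CFin_G/\cat F \cup K$, both sides evaluate to $R \otimes \widetilde{E}(\cat F \cup K) \otimes \mathbb{D}(\Sigma_+^\infty X)$: the left-hand side directly from the definition of $R^{(-)}_{/\cat F \cup K}$, and the right-hand side after using $L_{\cat F \cup K}(\widetilde{E}\cat F) \simeq \widetilde{E}\cat F \otimes \widetilde{E}(\cat F \cup K) \simeq \widetilde{E}(\cat F \cup K)$, which holds because $\widetilde{E}(\cat F \cup K)$ is a further smashing localization of $\widetilde{E}\cat F$. Naturality of this identification follows from the symmetric monoidal nature of all functors involved.

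For the second part, my plan is to leverage the orbit decomposition from \Cref{F-splitting}. For any $X \in \CFin_G/\cat F$, I write $X \simeq \pi_K(X) \sqcup (G/K)^{\sqcup n}$ for some $n \geq 0$, where $\pi_K(X) \in \CFin_G/\cat F \cup K$ consists of the orbits of $X$ different from $G/K$. Since $R^{(-)}_{/\cat F}$ is symmetric monoidal in the sense of \Cref{con-contensor-functor} (sending disjoint unions of $G$-sets to products of commutative algebras), and since $L_{\cat F \cup K}$ preserves finite products of commutative algebras (because finite products coincide with finite coproducts in the ambient stable category and $L_{\cat F \cup K}$ is a left adjoint), I obtain
\[
L_{\cat F \cup K}(R^X_{/\cat F}) \simeq L_{\cat F \cup K}(R^{\pi_K(X)}_{/\cat F}) \times L_{\cat F \cup K}(R^{G/K}_{/\cat F})^n,
\]
and the first part identifies the first factor with $R^{\pi_K(X)}_{/\cat F \cup K}$.

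The crux of the argument is then the vanishing $L_{\cat F \cup K}(R^{G/K}_{/\cat F}) = 0$. My approach is as follows: $\Sigma_+^\infty G/K$ is dualizable in $\Sp_G$, so $\widetilde{E}\cat F \otimes \Sigma_+^\infty G/K$ is dualizable in $\Sp_G/\cat F$ with dual $\widetilde{E}\cat F \otimes \mathbb{D}(\Sigma_+^\infty G/K)$. By construction of the further smashing localization, $L_{\cat F \cup K}$ annihilates $\widetilde{E}\cat F \otimes \Sigma_+^\infty G/K$, and since symmetric monoidal functors preserve duals, it also annihilates its dual. Tensoring with $R$ yields $L_{\cat F \cup K}(R^{G/K}_{/\cat F}) = 0$. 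The zero commutative algebra is terminal and hence absorbs in the product in $\CAlg$, so the displayed decomposition collapses to $R^{\pi_K(X)}_{/\cat F \cup K}$, giving commutativity of the square. I expect the subtle point to be precisely this vanishing argument: rather than computing $\mathbb{D}(\Sigma_+^\infty G/K)$ explicitly via a Wirthm{\"u}ller isomorphism, it uses abstract duality in the localized category to conclude.
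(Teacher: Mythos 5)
Your proof is correct and follows essentially the same route as the paper: the first part is the same unit-equivalence unraveling, and the second part amounts to the same vanishing observation — namely that $L_{\cat F\cup K}$ kills $R^{G/K}_{/\cat F}$ — which the paper dispatches by appeal to the coproduct argument of \Cref{factorizationgeneral} while you verify it directly via the orbit decomposition of $X$ from \Cref{F-splitting}. One nice touch on your end is the duality argument for $L_{\cat F\cup K}(R^{G/K}_{/\cat F})\simeq 0$ (symmetric monoidal functors preserve dualizability, so killing $\widetilde E\cat F\otimes\Sigma^\infty_+G/K$ forces killing its dual), which the paper leaves implicit.
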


\begin{proof}
   The first claim follows by unravelling the definitions and using the equivalence $ \widetilde{E}(\cat F \cup K) \xrightarrow{\sim}\widetilde{E}\cat F\otimes \widetilde{E}(\cat F \cup K)$. For the commutativity of the square we note that by the same argument as in \Cref{factorizationgeneral}, there is a factorization of $L_{\cat F\cup K}\circ R_{/\cat F}^{(-)}$ through $\pi_K\op$, and by the first part of the lemma, this factorization must be through $R_{/\cat F \cup K}^{(-)}$. 
   
   %note that the unit of the adjunction $(\pi_K, i_K)$ induces a natural transformation
  % \[
  % R_{/\cat F \cup K}^{\pi_K(X)}\simeq L_{\cat F \cup K}R_{/\cat F\cup K}^{i_K \pi_K (X)} \to L_{\cat F \cup K}R^X_{\cat F}
   %\]
  % which is an equivalence by checking on geometric fixed points. Here we using that if $H \not \in \cat F \cup K$, then $X$ and $i_K \pi_K(X)$ have the same $H$-fixed points. 
\end{proof}
As already discussed in the introduction, our goal in this paper is to find conditions to ensure that the functor \eqref{eq:standard} is an equivalence. The next result enables an induction on $\mathcal F$.

\begin{Thm}\label{thm-base-case}
   Let $G$ be a finite group, $\cat F$ a family of subgroups of $G$ and let $K\subseteq G$ be a subgroup such that $K \not \in \cat F$ and $K'\in \cat F$ for all $K'\subsetneq K$. Consider $R \in \CAlg(\Sp_G)$ and assume that $\Phi^K(R)\in \Fun(W_G(K), \CAlg)$ satisfies IC, RC and is separably closed. Assume furthermore that the functor 
     \[
    R^{(-)}_{/\cat F \cup K}\colon (\CFin_G/\cat F \cup K)\op \xrightarrow{\sim} \CSep((\Mod_{\Sp_G}(R)/\mathcal{F}\cup K)^\omega)
    \]
    is an equivalence. Then the functor
    \[
    R^{(-)}_{/\cat F} \colon (\CFin_G/\cat F)\op \xrightarrow{\sim} \CSep((\Mod_{\Sp_G}(R)/\mathcal{F})^\omega)
    \]
    is also an equivalence.   
\end{Thm}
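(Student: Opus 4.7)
The plan is to apply $\CSep(-)$, which preserves limits by \Cref{prop-sep-preserves-limits}, to the pullback square from \Cref{cor-pullback-perfect-G-spectra}. This exhibits the target $\CSep((\Mod_{\Sp_G}(R)/\cat F)^\omega)$ as the pullback of the groupoid cospan
\[
\begin{tikzcd}
& \CSep((\Mod_{\Sp_G}(R)/\cat F \cup K)^\omega)\arrow[d] \\
\CSep(\Perf(\Phi^K R)^{hW_G(K)})\arrow[r] & \CSep(\Perf(\Phi^K R)^{tW_G(K)}).
\end{tikzcd}
\]
By the inductive hypothesis, the top corner is $(\CFin_G/\cat F\cup K)\op$, and by \Cref{lem-separablyclosed-G} together with the separable closedness of $\Phi^K(R)$, the left corner is $(\CFin_{W_G(K)})\op$. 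Moreover, \Cref{factorizationgeneral} applied to $\Phi^K(R)$ (allowed since it satisfies IC and RC) factors the bottom horizontal arrow as
\[
(\CFin_{W_G(K)})\op \twoheadrightarrow A_2 \hookrightarrow \CSep(\Perf(\Phi^K R)^{tW_G(K)}),
\]
where $A_2 \simeq \prod_{e\not =(H)} (\CFin_{W_{W_G(K)}(H)}^{\mathrm{free}})\op$ is the groupoid of $W_G(K)$-sets without free orbits (via \Cref{F-splitting}) and the second arrow $\iota$ is fully faithful.

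The crux of the proof is to show that the right vertical map above also factors through $\iota$. Unwinding the construction of $\overline{\Phi}^K_R$, it sends $X \in \CFin_G/\cat F\cup K$ to the image in the Tate category of the cotensor $\Phi^K(R)^{X^K}$, where $X^K$ denotes the $W_G(K)$-set of $K$-fixed points of $X$. Every free $W_G(K)$-orbit of $X^K$ contributes a coinduced factor $\Phi^K(R)^{W_G(K)/e}$, which vanishes in $\Perf(\Phi^K R)^{tW_G(K)}$ by \Cref{tate-description}. Hence the essential image lies in $\iota(A_2)$, and fully faithfulness of $\iota$ yields a canonical lift $X \mapsto (X^K)_{\text{non-free}}$ into $A_2$. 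I expect this to be the main technical step, since it requires an explicit description of $\overline{\Phi}^K_R$ on standard algebras.

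Given both factorizations through $\iota$, the pullback becomes formal. Using the splitting $(\CFin_{W_G(K)})\op \simeq (\CFin_{W_G(K)}^{\mathrm{free}})\op \times A_2$ and the fact that the pullback over a fully faithful map through which the other leg factors reduces to that other leg, we obtain
\[
(\CFin_{W_G(K)})\op \times_{\CSep(\Perf(\Phi^K R)^{tW_G(K)})} (\CFin_G/\cat F\cup K)\op \simeq (\CFin_{W_G(K)}^{\mathrm{free}})\op \times (\CFin_G/\cat F\cup K)\op,
\]
which equals $(\CFin_G/\cat F)\op$ by a further application of \Cref{F-splitting}. Finally, to see that the resulting equivalence coincides with $R^{(-)}_{/\cat F}$, one checks that the standard-algebra functor is compatible with the identifications at the other three corners (using \Cref{lem-top-face-cube} for the square involving the localization, and naturality of the construction in \Cref{con-cotensor-G-equiv} for the square involving $\tilde{\Phi}^K_R$); the universal property of the pullback then forces the two functors to agree.
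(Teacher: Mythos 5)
Your proposal is correct and takes essentially the same route as the paper: apply $\CSep(-)$ to the pullback square of \cref{cor-pullback-perfect-G-spectra}, identify the left, top, and bottom vertices via separable closedness, the inductive hypothesis, and \cref{factorizationgeneral} respectively, and then use the factorizations through $\iota$ to compute the pullback. The one place the two differ is in the final step: the paper verifies commutativity of all faces of a cube explicitly (the ``back face'' computation $\overline{\Phi}^K R^X_{/\cat F\cup K}\simeq \iota(X^K)$ is exactly the ``main technical step'' you flag as needing verification) and then argues fully faithfulness and essential surjectivity of $R^{(-)}_{/\cat F}$ separately, whereas you compute the pullback directly by replacing $\CSep(\Perf(\Phi^K R)^{tW_G(K)})$ with the full subgroupoid $A_2$ and invoking the splitting $\CFin_\Gamma\op \simeq (\CFin_\Gamma^{\mathrm{free}})\op\times A_2$; both reach the same conclusion via \cref{F-splitting}. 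One small point of phrasing: for $X\in\CFin_G/\cat F\cup K$ the $W_G(K)$-set $X^K$ has \emph{no} free orbits at all (precisely because orbits of type $G/K$ were removed), so nothing needs to ``vanish'' in the Tate category---the factorization through $\iota$ is on the nose, matching the paper's back-face identity $q(\Phi^K R)^{X^K}\simeq\iota(X^K)$.
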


\begin{proof}
For the duration of this proof we set $\Gamma:=W_G(K)$. The core of the proof is the following diagram which we will explain:
\[\resizebox{\columnwidth}{!}{$\displaystyle
\begin{tikzcd}[ampersand replacement=\&]
\&\& {(\mathcal{F}\mathrm{in}_G/\mathscr{F}\cup K)^{\mathrm{op}}} \&\&\&\&\& {\CSep((\Mod_{\Sp_G}(R)/\mathcal{F}\cup K)^\omega) } \\
	{(\mathcal{F}\mathrm{in}_G/\mathscr{F})^{\mathrm{op}}} \&\&\&\&\& {\CSep((\Mod_{\Sp_G}(R)/\mathcal{F})^\omega)} \\
	\\
	\&\& {\prod_{e \not = (U)\subseteq \Gamma} (\mathcal{F}\mathrm{in}_{W_{\Gamma}(U)}^{\mathrm{free}})^{\mathrm{op}}} \&\&\&\&\& {\CSep (\Perf(\Phi^{K}R)^{t\Gamma})} \\
	\CFin_{\Gamma}^{\mathrm{op}}\simeq {\prod_{(U)\subseteq \Gamma} (\mathcal{F}\mathrm{in}_{W_{\Gamma}(U)}^{\mathrm{free}})^{\mathrm{op}}} \&\&\&\&\& {\CSep (\Perf(\Phi^{K}R)^{h\Gamma}) } \\
	\&\& \ast \\
	{(\mathcal{F}\mathrm{in}_{\Gamma}^{\mathrm{free}})^{\mathrm{op}}}
	\arrow["{R_{/\cat F \cup K}^{(-)}}", "\sim"', from=1-3, to=1-8]
	\arrow["{\overline{(-)^K}}"', {pos=0.6},dashed, from=1-3, to=4-3]
	\arrow["{\overline{\Phi}^K}", from=1-8, to=4-8]
	\arrow["{\pi_K}", from=2-1, to=1-3]
	\arrow["{R_{/\cat F}^{(-)}}"{pos=0.6}, from=2-1, to=2-6]
	\arrow["{(-)^K}"', from=2-1, to=5-1]
	\arrow["{L_{\cat F \cup K}}"', from=2-6, to=1-8]
	\arrow["{\Phi^K}",{pos=0.4}, from=2-6, to=5-6]
	\arrow["\iota", {pos=0.4}, tail, from=4-3, to=4-8]
	\arrow[dashed, from=4-3, to=6-3]
	\arrow["{\mathrm{proj}}", from=5-1, to=4-3]
	\arrow["{(\Phi^K R)^{(-)}}"{pos=0.6}, "\sim"'{pos=0.6},from=5-1, to=5-6]
	\arrow["{\mathrm{proj}}"', from=5-1, to=7-1]
	\arrow["q"',from=5-6, to=4-8]
	\arrow[from=7-1, to=6-3]
\end{tikzcd}
$}
\]
The right most face of the cube is obtained by applying $\CSep(-)$ to the pullback square of symmetric monoidal stable $\infty$-categories of \cref{cor-pullback-perfect-G-spectra}. In particular, this is a pullback square. The left face of the cube is obtain from \cref{prop:decomp_Fin_G} by passing to groupoids. In particular, the square is commutative and the functor $\overline{(-)^K}$ is given by the composite $\proj((i_K(-))^K)$. We now claim that this left face of the cube is a pullback. This follows from the pasting lemma, \cite[Proposition 4.4.2.1]{HTT}, since the bottom left square and the outer left square are pullback squares, for the latter use \Cref{F-splitting}.

Next, we explain how the remaining faces of the cube arise and why they commute. The top and bottom faces are given by the squares in \Cref{lem-top-face-cube} and \Cref{factorizationgeneral}, so they commute. In particular we deduce that $\iota$ is fully faithful and that the functor $(\Phi^K R)^{(-)}$ is an equivalence, as depicted in the diagram. The front face of the cube commutes since 
\begin{align*}
    \Phi^K R_{/\cat F}^{(X)} & =\Phi^K(R \otimes \widetilde{E}\cat F \otimes \mathbb{D}(\Sigma_+^\infty X)) \\
    &\simeq \Phi^K (R) \otimes \mathbb{D}(\Sigma_+^\infty X^K) \\
    & \simeq\iHom(\Sigma_+^\infty X^K, \Phi^K (R)) \\
    & \simeq \prod_{X^K} \Phi^K(R)\\
    & \simeq (\Phi^K R)^{X^K}
\end{align*}
where we used the properties of the geometric fixed points, dualizability of $\Sigma_+^\infty X^K$, the properties of the cotensor functor. For the back face of the cube we use the first claim in \Cref{lem-top-face-cube} and the commutativity of the other faces to compute:
\begin{align*}
    \overline{\Phi}^K R^X_{/\cat F \cup K} & \simeq \overline{\Phi}^K L_{\cat F \cup K} R^{i_K(X)}_{/\cat F} \\
    & \simeq q\Phi^K R^{i_K(X)}_{\cat F} \\
    & \simeq q (\Phi^K R)^{i_K(X)^K}\\
    & \simeq \iota \proj(i_K(X)^K)=\iota (\overline{X^K})
\end{align*}
where in the last step we used the definition of $\overline{(-)^K}$.

Now that we have shown that the above diagram commutes we are ready to prove that $R_{/\cat F}^{(-)}$ is an equivalence. Note that $R_{/\cat F}^{(-)}$ is fully faithful as it is a pullback of fully faithful functors. For essential surjectivity note that any object in $\CSep((\Mod_{\Sp_G}(R)/\cat F)^\omega)$ can be represented by a triple $(A,A', f)$ where $A\in \CSep(\Perf(\Phi^K R)^{h\Gamma})$, $A' \in \CSep((\Mod_{\Sp_G}(R)/\cat F \cup K)^\omega)$ and $f$ is an equivalence between the images of $A$ and $A'$ in $\CSep(\Perf(\Phi^K R)^{t\Gamma})$. By our assumptions we can find preimages $X\in\CFin_{\Gamma}^{\mathrm{op}}$ and $X'\in (\CFin_G/{\cat F \cup K})^{\mathrm{op}}$ of $A$ and $A'$ respectively. Moreover using the commutativity  of the back face of the cube and the fully faithfulness of $\iota$, we see that there is a unique equivalence $g$ between the images of $X$ and $X'$, which hits $f$. Then the triple $(X,X',g)$ defines an object of $(\CFin_G/\cat F)\op$ which hits $(A,A', f)$ concluding the proof.
\end{proof}

\begin{Cor}\label{main-theorem}
    Let $G$ be a finite group, $\cat F$ a family of subgroups of $G$ and consider $R \in \CAlg(\Sp_G)$. Suppose that $\Phi^K(R)\in \Fun(W_G(K), \CAlg)$ satisfies IC, RC and is separably closed for all subgroups $K \not \in \cat F$. 
    Then the functor 
    \[
    R_{/\cat F}^{(-)}\colon (\CFin_G/\cat F)\op \xrightarrow{\sim} \CSep((\Mod_{\Sp_G}(R)/\cat F)^\omega).
    \] 
    is an equivalence of $\infty$-groupoids.
\end{Cor}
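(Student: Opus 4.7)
The plan is to deduce this from Theorem~\ref{thm-base-case} by downward induction on the number of subgroups of $G$ that are \emph{not} contained in $\cat F$ (equivalently, upward induction along the poset of families of subgroups of $G$).

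For the base case, I would take $\cat F$ to be the family of all subgroups of $G$. Then $E\cat F$ is an equivariantly contractible $G$-space, so $\widetilde{E}\cat F \simeq 0$ and $\Mod_{\Sp_G}(R)/\cat F$ is the zero category. Consequently $\CSep((\Mod_{\Sp_G}(R)/\cat F)^\omega)$ is the one-point groupoid containing only the zero algebra. On the other side, the only finite $G$-set whose orbits lie outside $\cat F$ is the empty one (realized as the empty coproduct), so $\CFin_G/\cat F$ is likewise a one-point groupoid, and $R_{/\cat F}^{(-)}$ is trivially an equivalence. The hypotheses on $\Phi^K(R)$ are vacuous here.

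For the inductive step, I would suppose $\cat F$ is not the full family and that the corollary holds for every family strictly larger than $\cat F$. Since $G$ has only finitely many subgroups and the collection of subgroups not in $\cat F$ is nonempty, I can pick some $K \not\in \cat F$ that is minimal with this property, so that every proper subgroup $K' \subsetneq K$ lies in $\cat F$. The family $\cat F \cup K$ is then strictly larger than $\cat F$, and the inclusion
\[
\{K' \subseteq G \noloc K' \not\in \cat F \cup K\} \subseteq \{K' \subseteq G \noloc K' \not\in \cat F\}
\]
implies that the standing hypothesis propagates: $\Phi^{K'}(R)$ satisfies IC, RC, and is separably closed for every $K' \not\in \cat F \cup K$. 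Therefore the inductive hypothesis applies at $\cat F \cup K$, and $R_{/\cat F \cup K}^{(-)}$ is an equivalence.

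Finally I would invoke Theorem~\ref{thm-base-case}: the minimality of $K$ among subgroups not in $\cat F$ matches the theorem's combinatorial hypothesis, the conditions IC, RC and separable closedness of $\Phi^K(R)$ hold by assumption (since $K \not\in \cat F$), and the functor $R_{/\cat F \cup K}^{(-)}$ is an equivalence by the inductive hypothesis. The theorem then delivers the equivalence at level $\cat F$, closing the induction. There is no substantial obstacle beyond this organizational bookkeeping; the genuine content of the argument is already contained in Theorem~\ref{thm-base-case}, whose proof exploits the pullback decomposition of Corollary~\ref{cor-pullback-perfect-G-spectra} together with the factorization of Lemma~\ref{factorizationgeneral}.
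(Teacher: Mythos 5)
The structure of your induction differs from the paper's in a way that matters. You take $\cat F = \mathrm{All}$ as the base case (correctly observing both sides are contractible groupoids), and then apply Theorem~\ref{thm-base-case} for every inductive step. The paper instead takes the family $\cat F_{n-1}$ of \emph{proper} subgroups as the base case and argues it directly: there $\Phi^G$ gives an equivalence $(\Mod_{\Sp_G}(R)/\cat F_{n-1})^\omega \simeq \Perf(\Phi^G R)$, and separable closedness of $\Phi^G(R)$ together with $\CFin_G/\cat F_{n-1}\simeq \CFin$ finishes that case. Only \emph{then} does the paper invoke Theorem~\ref{thm-base-case}, and always with $K$ a \emph{proper} subgroup of $G$.

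This is not a cosmetic difference. Your very first inductive step, from $\mathrm{All}$ down to the family of proper subgroups, would apply Theorem~\ref{thm-base-case} with $K = G$. But then $W_G(G)=e$, and the IC hypothesis for $\Phi^G(R) \in \Fun(Be,\CAlg)$ demands that $\Phi^G(R)^{te}$ be indecomposable (in particular nonzero). The Tate construction over the trivial group is the cofiber of the identity, hence zero, so this condition can never hold (cf.\ Theorem~\ref{thm-IC}, which explicitly excludes the trivial group). The same issue prevents a literal reading of the hypotheses in Corollary~\ref{main-theorem} itself, but the paper's proof never actually \emph{uses} IC or RC at $K=G$: the degenerate case $K=G$ is dispatched via the direct argument, which only needs $\Phi^G(R)$ to be separably closed. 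Your proof, by funneling every step through Theorem~\ref{thm-base-case}, does need IC at $K=G$, and so it does not go through. The fix is to replace your trivial base case with the paper's one at the family of proper subgroups, proved via $\Phi^G$ and separable closedness, and then run your induction from there (all remaining $K$'s being proper, so $W_G(K)$ need not be trivial and the IC/RC hypotheses are at least meaningful).
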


\begin{proof}
    Firstly, we observe that the claim is trivial if $\cat F=\mathrm{All}$, so let us assume that $\cat F$ is a proper family. Choose an exhaustive filtration of families of subgroups of $G$
    \[
    \cat F=\cat F_0 \subseteq \cat F_1 \subseteq \ldots \subseteq \cat F_{i-1} \subseteq \cat F_{i}\subseteq \ldots \subseteq \cat F_n=\mathrm{All}
    \]
    with each $\cat F_{i}\backslash \cat F_{i-1}= \{(K_i)\}$ consisting of a single conjugacy class. Note that this implies that for all proper subgroups $K \subsetneq K_i$ we have $K \in \cat F_{i-1}$.  
    We will prove by descending induction on $i\leq n-1$, that the functor
     \[
    R_{/\cat F_i}^{(-)}\colon (\CFin_G/\cat F_i)\op \xrightarrow{\sim} \CSep((\Mod_{\Sp_G}(R)/\cat F_i)^\omega)
    \]
   is an equivalence. The case $i=0$ will then be our claim. If $i=n-1$, then $\cat F_i=\cat F_{n-1}$ is the family of proper subgroups, so taking $G$-geometric fixed points induces an equivalence $\Phi^G \colon (\Mod_{\Sp_G}(R)/\cat F_{n-1})^\omega\xrightarrow{\sim} \Perf(\Phi^G( R))$. Since $\Phi^G(R)$ is separably closed, we also deduce that
    \[
    \CSep((\Mod_{\Sp_G}(R)/\mathcal{F}_{n-1})^\omega)=\CSep(\Perf(\Phi^G(R)))=\CFin\op.
    \]
    By definition, any $X \in (\CFin_G/\cat F_{n-1})\op$ is such that $X^G=X$, and so taking $G$-fixed points gives an equivalence $\CFin_G/\cat F_{n-1} \simeq \CFin$. It then follows that the composite 
    \[
    (\CFin_G/\cat F_{n-1})\op \xrightarrow{R_{/\cat F_{n-1}}^{(-)}} \CSep((\Mod_{\Sp_G}(R)/\cat F_{n-1})^\omega)\xrightarrow[\sim]{\Phi^G} \CSep(\Perf(\Phi^G R)) \simeq \CFin\op
    \]
    is an equivalence. By the $2$-out-of-$3$ property we conclude that $R_{/\cat F_{n-1} }^{(-)}$ is an equivalence proving the base case. The inductive step then follows from \Cref{thm-base-case} and so we are done by induction.
\end{proof}

We record the two main examples we have. In the following we abbreviate 
\[
\bbZ_G:=\infl_e^G \bbZ \in \CAlg(\Sp_G)
\]
and recall that its category of modules can be identified with the $\infty$-category of derived Mackey functors of Kaledin, see \cite[Section 4]{PSW}.

\begin{Cor}\label{cor:sep-in-p-group}
    Let $p$ be a prime number and $G$ a $p$-group. Then the functor \eqref{betterfunctor} induce equivalences  
    \[
    \Fin_G\op\xrightarrow{\sim} \CAlg^{\sep}(\Sp_G^\omega) \quad \mathrm{and}\quad
    \Fin_G\op\xrightarrow{\sim} \CAlg^{\sep}(\Mod_{\Sp_G}(\bbZ_G)^\omega).
    \]
    In particular, every separable commutative algebra in either $\Sp_G^\omega$ or $\Mod_{\Sp_G}(\bbZ_G)^\omega$  is standard.
\end{Cor}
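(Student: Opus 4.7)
The plan is to apply \Cref{main-theorem} with $\cat F = \emptyset$ to each of $R = \mathbb{S}_G$ and $R = \bbZ_G := \infl_e^G \bbZ$, using that $\Sp_G^\omega = \Mod_{\Sp_G}(\mathbb{S}_G)^\omega$. This reduces the corollary to verifying, for every subgroup $K \subseteq G$, that $\Phi^K(R) \in \Fun(BW_G(K), \CAlg)$ satisfies IC, RC and is separably closed.

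The first observation is that the geometric fixed points simplify: $\Phi^K(\mathbb{S}_G) \simeq \mathbb{S}$ and $\Phi^K(\bbZ_G) \simeq \bbZ$, each equipped with the trivial $W_G(K)$-action. Since $G$ is a $p$-group, the Weyl group $W_G(K) = N_G(K)/K$ is again a $p$-group, being a subquotient of $G$. Moreover, for any proper subgroup $K \subsetneq G$, the standard fact that proper subgroups of a $p$-group are strictly contained in their normalizers implies $W_G(K)$ is nontrivial.

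With these reductions in hand, the three conditions are off-the-shelf. Separable closedness of $\mathbb{S}$ and $\bbZ$ is the content of the example following \Cref{lem-separablyclosed-G} (via Minkowski's theorem). For proper $K \subsetneq G$, where $W_G(K)$ is a nontrivial $p$-group: IC for $\mathbb{S}$ is exactly \Cref{thm-IC}, and IC for $\bbZ$ is \Cref{exa-integers-IC}. For RC, we first treat $\bbZ$ via \Cref{RC-for-com-rings}: it is $|W_G(K)|$-torsion free and the unique prime divisor $p$ of $|W_G(K)|$ is not invertible in $\bbZ$. RC for $\mathbb{S}$ is then inherited from RC for $\bbZ$ by \Cref{emptymaps-connective} applied to the unit map $\mathbb{S} \to \bbZ$. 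Finally, for $K = G$ the Weyl group is trivial, and inspection of the proof of \Cref{main-theorem} shows that only separable closedness is required at this extremal stage (the other conditions enter only through the inductive step).

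There is no serious obstacle: the heavy lifting is already packaged in \Cref{main-theorem}, and the proof amounts to a hypothesis-check for the two specific commutative ring $G$-spectra of interest. The parenthetical ``every separable commutative algebra is standard'' is then just a translation of the essential surjectivity half of the equivalences, given the definition of the functor $R^{(-)}$ and the terminology introduced before.
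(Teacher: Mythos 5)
Your proof is correct and follows the paper's approach exactly: apply \Cref{main-theorem} with $\cat F=\emptyset$, after checking IC, RC, and separable closedness for the geometric fixed points $\mathbb{S}$ and $\bbZ$ with trivial Weyl-group action, citing \Cref{thm-IC}, \Cref{exa-integers-IC}, \Cref{RC-for-com-rings} and \Cref{emptymaps-connective}. You are in fact a bit more careful than the published proof: at $K=G$ the Weyl group $W_G(G)$ is trivial, so $A^{te}=0$ and IC fails on its literal definition (and the hypotheses of \Cref{main-theorem} are, read strictly, never satisfied for $\cat F=\emptyset$ and $G$ nontrivial), and your observation that the base case $i=n-1$ of the induction in \Cref{main-theorem} only invokes separable closedness of $\Phi^G(R)$ — so that IC and RC are genuinely only needed for proper $K\subsetneq G$, where $W_G(K)$ is a nontrivial $p$-group — is exactly the right repair, one the paper's own proof of the corollary glosses over.
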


\begin{proof}
    If $G$ is a $p$-group, then for all $K\subseteq G$, the $K$-geometric fixed points are 
     $\Phi^K \mathbb{S}_G=\mathbb{S}$ and $\Phi^K \bbZ_G=\bbZ$ both with trivial Weyl group action. Note that these commutative algebras are all separably closed by \cite[Corollary 10.6]{NaumannPol} again. They satisfy IC by \Cref{thm-IC} and \Cref{exa-integers-IC}, and RC by \cref{ex:RC-Z-S}. Thus the fact that the above functors are essentially surjective follow from \Cref{main-theorem} applied to $\cat F=\emptyset$. For fully faithfulness, we apply \cref{prop-fully-faulful} and note that for any subgroups $H\subseteq G$, the groups $\pi_0^H(\Sphere_H)$ and $\pi_0^H(\bbZ_H)$ agree with the Burnside ring $A(H)$, see \cite[Remark 2.20]{PSW} for the second category, which is indecomposable by a result of Dress, see \Cref{rem-solvable}.
\end{proof}

We also describe the Galois group of these categories following \cite{Mathew2016}.

\begin{Cor}\label{cor-galois}
    Let $p$ be a prime number and $G$ a $p$-group. Then the Galois groups of $\Sp_G$ and $\Mod_{\Sp_G}(\bbZ_G)$ are trivial.
\end{Cor}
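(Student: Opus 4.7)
The plan is to deduce \Cref{cor-galois} from \Cref{cor:sep-in-p-group} together with Mathew's characterization of the Galois group~\cite{Mathew2016}: the Galois group of a symmetric monoidal stable $\infty$-category $\cat C$ is trivial if and only if every finite faithful Galois extension of the unit is split. For $G=e$ this is classical (Minkowski's theorem for $\Sp^\omega$ and $\Perf(\bbZ)$), so one reduces to the case where $G$ is a nontrivial $p$-group. Fix $R=\mathbb{S}_G$ or $R=\bbZ_G$, and consider a faithful $\Gamma$-Galois extension $R\to A$ in $\cat C=\Mod_{\Sp_G}(R)^\omega$ for some finite group $\Gamma$.

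Since $A$ is in particular a separable commutative algebra, \Cref{cor:sep-in-p-group} yields an essentially unique finite $G$-set $X$ with $A\simeq R^X$; as the classifying functor is an equivalence of groupoids, the $\Gamma$-action on $A$ transports to a $\Gamma$-action on $X$ in $\Fin_G$. The Galois isomorphism $A\otimes_R A\xrightarrow{\sim}\prod_\Gamma A$ pulls back under $\CFin_G\op\simeq\CSep(\cat C)$ to an isomorphism $X\times X\simeq\Gamma\times X$ in $\Fin_G$, which is precisely the assertion that $X$ is a $\Gamma$-torsor over the terminal $G$-set $X/\Gamma=G/G$. Such torsors are classified, up to $\Gamma$-conjugation, by homomorphisms $\rho\colon G\to\Gamma$: the torsor associated to $\rho$ is $\Gamma$ with $G$-action $g\cdot\gamma:=\rho(g)\gamma$, and as a $G$-set decomposes into $[\Gamma:\rho(G)]$ copies of $G/\ker(\rho)$.

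The remaining step is to invoke faithfulness of $A=R^X$ to force $\rho$ to be trivial. The geometric fixed points formula $\Phi^K(R^X)\simeq(\Phi^K R)^{X^K}$ at $K=G$ shows that $\Phi^G(R^X)=0$ as soon as $X^G=\emptyset$, which is the case whenever $\ker(\rho)\neq G$. Tensoring $A$ with the nonzero $R$-module $\widetilde{E}\cat P\otimes R$, where $\cat P$ is the family of proper subgroups of $G$, then yields an object all of whose geometric fixed points vanish and hence is zero: this contradicts faithfulness. Therefore $\rho$ is trivial, so $X\simeq\Gamma$ carries the trivial $G$-action and $A\simeq\prod_\Gamma R$ is split. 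The principal delicacy in this plan is the translation of the Galois descent data in $\cat C$ into the combinatorial torsor condition on $X\in\Fin_G$ via the groupoid-level equivalence of \Cref{cor:sep-in-p-group}, after which faithfulness provides the final obstruction selecting only the split torsors.
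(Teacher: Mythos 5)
Your proof is correct, but it is genuinely a different route from the one in the paper. The paper first observes that in both cases $\pi_0(\unit)\simeq A(G)$ is indecomposable (Dress, since $p$-groups are solvable), so by \cite[Proposition 3.7]{NaumannPol} the Balmer spectrum is connected; it then invokes the characterization from \cite[Corollary 8.13]{NaumannPol} that finite Galois covers are exactly the separable algebras with locally constant --- hence constant --- degree function, and reduces, via \Cref{cor:sep-in-p-group}, to the observation that the degree function of $R^{G/H}$ is supported on $\supp(R^{G/H})$, which is a proper subset of the spectrum unless $H=G$. You instead invoke Mathew's ``Galois group trivial iff every faithful finite Galois extension is split,'' transport the Galois descent data through the classification $\CFin_G\op\simeq\CSep(\cat C)$ to a $\Gamma$-torsor in finite $G$-sets, classify these by conjugacy classes of homomorphisms $\rho\colon G\to\Gamma$, and then use geometric fixed points to kill any $\rho$ with $\ker(\rho)\neq G$ by exhibiting a nonzero $R$-module (namely $\widetilde{E}\cat P\otimes R$) that becomes zero after tensoring with $A$. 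Both ways bottom out in a support argument on geometric fixed points, but the degree function packages this in a single step. Your route avoids \cite[Corollary 8.13]{NaumannPol} entirely at the cost of the torsor classification in $\Fin_G$.

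One point to be careful about: \Cref{cor:sep-in-p-group} is stated only as an equivalence of $\infty$-groupoids, whereas transporting the shear isomorphism $A\otimes_R A\xrightarrow{\sim}\prod_\Gamma A$ to an isomorphism $X\times X\simeq\Gamma\times X$ in $\Fin_G$ uses that the classifying functor $R^{(-)}\colon\Fin_G\op\to\CAlg^{\sep}(\Mod_{\Sp_G}(R)^\omega)$ is symmetric monoidal and product-preserving (so that $R^{X\times Y}\simeq R^X\otimes_R R^Y$ and $R^{X\sqcup Y}\simeq R^X\times R^Y$), not merely an equivalence of underlying spaces. This compatibility is in fact supplied by \Cref{con-contensor-functor} and \Cref{not-AX}, so the argument goes through, but it deserves to be spelled out since you are applying more structure than the bare groupoid equivalence records.
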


\begin{proof}
    To treat both cases simultaneously, let us write $A^G_H$ both for the separable commutative algebra $\mathbb{D}(G/H_+)$ in $\Sp_G^\omega$ and for $\bbZ_G \otimes\mathbb{D}(G/H_+) \in \Mod_{\Sp_G}(\bbZ_G)^\omega $. We have already noted that $\pi_0(\unit)$ in both categories is given by the Burnside ring $A(G)$ which is indecomposable.  It follows that their Balmer spectra are connected by \cite[Proposition 3.7]{NaumannPol}. Since the finite Galois covers are exactly those separable algebras with underlying compact object and with locally constant (i.e. constant) degree function (\cite[Corollary 8.13]{NaumannPol}), using \Cref{cor:sep-in-p-group} it will suffice to check that the degree function of $A^G_H$ is constant if and only if $G=H$. But in both cases the degree function is nonzero exactly at points in $\supp(A^G_H)$ which is a proper subset of the Balmer spectrum unless $G=H$, see \cite[Lemma 2.12 and Proposition 2.13]{PSW} and \cite[Corollary 4.13]{BS2017}.
\end{proof}

\section{Not all separable algebras are standard}\label{sec-nonall}

In this section we show that not all separable commutative algebras in $\Sp_G^\omega$ are standard. We start off with the following:
\begin{Rem}\label{rem-solvable}
Consider $\mathbb{S}_G \in \Sp_G$, and note that $\pi_0^G(\mathbb{S}_G)=A(G)$ the Burnside ring of the finite group $G$. By a result of Dress \cite{Dress}, the ring $A(G)$ is indecomposable if and only if $G$ is solvable. It follows that if $G$ is not solvable then $\mathbb{S}_G$ decomposes non-trivially as a direct product $\mathbb{S}_G[f^{-1}]\times \mathbb{S}_G[(1-f)^{-1}]$ of separable algebras, which we claim are not standard. Indeed suppose that $\mathbb{S}_G[f^{-1}]\simeq \mathbb{D}(X_+)$ for some finite $G$-set $X$. Since $\Phi^e \mathbb{S}_G=\mathbb{S} \in \Sp$ is indecomposable, we must have either $\Phi^e \mathbb{S}_G[f^{-1}]=0$ or $\Phi^e \mathbb{S}_G[(1-f)^{-1}]=0$. In the first case we deduce that $\Phi^e\mathbb{D}(\Sigma_+^\infty X)=0$ so $X=\emptyset$. This is a contradiction as we assumed that $\mathbb{S}_G[f^{-1}]\not =0$. In the second case we must have $\Phi^e \mathbb{S}_G[f^{-1}]=\mathbb{S}_G$. By checking on $e$-geometric fixed points we deduce that $X=\{\ast\}$, so that $\mathbb{S}_G[f^{-1}]\simeq \mathbb{S}_G$. This is a contradiction as $1-f$ is a nontrivial idempotent on $\mathbb{S}_G$ but trivial on $\mathbb{S}_G[f^{-1}]$.
\end{Rem}

However, even if the group is solvable (but not a $p$-group), there may exist non-standard separable algebras. To show this, we need some preliminary results.

\begin{Prop}\label{prop-pullback-pi0}
    Consider a pullback of groupoids
    \[
    \begin{tikzcd}
        \cat A \arrow[r] \arrow[d] & \cat B \arrow[d,"f"]\\
        \cat C \arrow[r,"g"] & \cat D.
    \end{tikzcd}
    \]
    Then the canonical map $\varphi\colon \pi_0(\cat A) \to \pi_0(\cat B) \times_{\pi_0(\cat D)} \pi_0(\cat C)$ is surjective. Moreover given $a=(b,c, \eta \colon f(b) \xrightarrow{\sim} g(c) )\in \pi_0(\cat A)$, there is a bijection 
    \[
    \varphi^{-1}\varphi(a)\xrightarrow{\sim} \doublefaktor{{\mathrm{Aut}_{\cat C}(c)}\op}{\mathrm{Aut}_{\cat D}(f(b))}{\mathrm{Aut}_{\cat B}(b)},
    \]
    where $\mathrm{Aut}_{\cat B}(b)$ acts on the right via $f$, and $\mathrm{Aut}_{\cat C}(c)\op$ acts on the left via the group homomorphism 
     \[
     \mathrm{Aut}_{\cat C}(c)\op \to \mathrm{Aut}_{\cat D}(f(b)), \quad \gamma \mapsto \eta^{-1} g(\gamma)^{-1}\eta.
     \]
\end{Prop}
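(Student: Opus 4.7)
The plan is to exploit the standard description of the pullback of groupoids: objects of $\cat A$ can be taken to be triples $(b, c, \eta)$ with $b \in \cat B$, $c \in \cat C$ and $\eta \colon f(b) \xrightarrow{\sim} g(c)$ in $\cat D$, and a morphism $(b,c,\eta) \to (b',c',\eta')$ is a pair $(\alpha, \beta)$ with $\alpha \colon b \xrightarrow{\sim} b'$ in $\cat B$, $\beta \colon c \xrightarrow{\sim} c'$ in $\cat C$, such that $\eta' \circ f(\alpha) = g(\beta) \circ \eta$. Once this model is set up, both statements become direct combinatorial manipulations.

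For surjectivity of $\varphi$, given a class $([b], [c]) \in \pi_0(\cat B) \times_{\pi_0(\cat D)} \pi_0(\cat C)$, pick any isomorphism $\eta \colon f(b) \xrightarrow{\sim} g(c)$ witnessing $[f(b)] = [g(c)]$ in $\pi_0(\cat D)$ (such $\eta$ exists by assumption). Then the triple $(b,c,\eta)$ is an object of $\cat A$ whose class maps to $([b],[c])$.

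For the fiber computation, fix a base-point triple $a = (b,c,\eta)$. Every class in $\varphi^{-1}\varphi(a)$ admits a representative of the form $(b, c, \eta')$ for some isomorphism $\eta' \colon f(b) \xrightarrow{\sim} g(c)$, by choosing isomorphisms in $\cat B$ and $\cat C$ to move the $\cat B$- and $\cat C$-components to $b$ and $c$ respectively. Write $\eta' = \eta \circ \theta$ where $\theta := \eta^{-1}\circ\eta' \in \mathrm{Aut}_{\cat D}(f(b))$; this gives a surjection
\[
\mathrm{Aut}_{\cat D}(f(b)) \twoheadrightarrow \varphi^{-1}\varphi(a), \qquad \theta \mapsto [(b,c,\eta\circ\theta)].
\]
By the description of morphisms recalled above, $(b,c,\eta\circ\theta)$ and $(b,c,\eta\circ\theta'')$ represent the same class iff there exist $\alpha \in \mathrm{Aut}_{\cat B}(b)$ and $\beta \in \mathrm{Aut}_{\cat C}(c)$ with $\eta\circ\theta''\circ f(\alpha) = g(\beta)\circ\eta\circ\theta$, i.e. $\theta'' = (\eta^{-1} g(\beta) \eta)\cdot \theta \cdot f(\alpha)^{-1}$. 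Thus the fibers of this surjection are precisely the orbits of the two-sided action where $\mathrm{Aut}_{\cat B}(b)$ acts on the right via $f$ and where $\mathrm{Aut}_{\cat C}(c)$ acts on the left by $\beta \mapsto \eta^{-1}g(\beta)\eta$; re-expressing this left action as a right action of $\mathrm{Aut}_{\cat C}(c)^{\mathrm{op}}$ (equivalently, via $\gamma \mapsto \eta^{-1}g(\gamma)^{-1}\eta$ as in the statement) gives the stated double-coset description.

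There is no real obstacle; the only thing to be slightly careful about is the bookkeeping of the $\mathrm{op}$ on $\mathrm{Aut}_{\cat C}(c)$, which boils down to the fact that $\gamma \mapsto g(\gamma)^{-1}$ reverses the order of composition and hence defines an honest group homomorphism out of $\mathrm{Aut}_{\cat C}(c)^{\mathrm{op}}$. Conjugation by $\eta$ is then a harmless identification that makes this land in $\mathrm{Aut}_{\cat D}(f(b))$, the natural home for both actions.
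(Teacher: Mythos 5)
Your proof is correct and is essentially the same argument as the paper's: both identify objects of $\cat A$ with triples, both use the map $\theta\mapsto[(b,c,\eta\theta)]$ from $\mathrm{Aut}_{\cat D}(f(b))$, and both unwind the pullback description of morphisms to compute when two such triples are isomorphic. The only difference is organizational — the paper writes down an explicit two-sided inverse $\Phi$ and checks well-definedness of that map plus both composites, whereas you exhibit the surjection and identify its fibers as orbits directly, which is a slightly more economical way to reach the same double-coset description.
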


\begin{proof}
   Pick $[b',c'] \in \pi_0(\cat B) \times_{\pi_0(\cat D)}\pi_0(\cat C)$ so that $[f(b')]=[g(c')]$, that is, there exists an isomorphism $\eta \colon f(b') \xrightarrow{\sim} g(c')$ in $\cat D$. By definition of pullback in groupoids, the triple $(b',c',\eta')$ defines an object $a' \in \cat A$ which satisfies $\varphi(a')=[b',c']$, showing surjectivity of $\varphi$. For the second claim consider $a'=[b',c',\eta'\colon f(b')\xrightarrow{\sim} g(c')]\in\pi_0(\mathcal A)$ and suppose that $\varphi(a)=\varphi(a')$ which means that there exist isomorphisms $\beta \colon b \xrightarrow{\sim} b'$ and $\gamma \colon c \xrightarrow{\sim}c'$. We define a map (of sets)
   \begin{align*}
   \Phi \colon \varphi^{-1}\varphi(a) &\to \doublefaktor{{\mathrm{Aut}_{\cat C}(c)}\op}{\mathrm{Aut}_{\cat D}(f(b))}{\mathrm{Aut}_{\cat B}(b)}\\
   \quad a'=[b',c',\eta']&\mapsto [\eta^{-1}g(\gamma)^{-1}\eta' f(\beta)].
   \end{align*}
   We need to verify that this is well-defined so that the class in the double coset is independent of the chosen $\beta$ and $\gamma$. If we pick other isomorphisms $\beta' \colon b \xrightarrow{\sim} b'$ and $\gamma' \colon c \xrightarrow{\sim} c'$ witnessing that $\varphi(a)=\varphi(a')$, then we can find  $\nu \in \mathrm{Aut}_{\cat B}(b)$ and $\epsilon \in \mathrm{Aut}_{\cat C}(c)$ such that $\beta'=\beta \nu$ and $\gamma'=\gamma \epsilon$. Then the image under $\Phi$ is
   \begin{align*}
    \eta^{-1}g(\gamma')^{-1}\eta' f(\beta') & =\eta^{-1}g(\epsilon)^{-1}g(\gamma)^{-1}\eta'f(\beta)f(\nu)\\
    & =\eta^{-1}g(\epsilon)^{-1}\eta \eta^{-1}g(\gamma)^{-1}\eta'f(\beta)f(\nu)
   \end{align*}
   which defines the same class as $\eta^{-1}g(\gamma)^{-1}\eta' f(\beta)$ in the codomain of $\Phi$.
    There is also a map 
    \[
    \Psi \colon \mathrm{Aut}_{\cat D}(f(b)) \to \varphi^{-1}\varphi(a), \quad \delta \mapsto [b,c, \eta\delta \colon f(b) \xrightarrow{\sim} g(c)].
    \] 
    We first note that $\varphi \Psi(\delta)=[b,c]=\varphi(a)$ so $\Psi$ indeed lands in $\varphi^{-1}\varphi(a)$. To verify that $\Psi$ factors through the double coset we take $\delta\in \mathrm{Aut}_{\cat D}(f(b))$ and $\delta'=\eta^{-1}g(\gamma)^{-1}\eta\delta f(\beta)$ for some $\beta \in \mathrm{Aut}_{\cat B}(b)$ and $\gamma \in \mathrm{Aut}_{\cat C}(c)$. We then need to show that $(b,c,\eta\delta)$ and $(b,c,\eta \delta')=(b,c,g(\gamma)^{-1}\eta\delta f(\beta))$ are isomorphic in $\cat A$. This holds since there is a commutative diagram
    \[
     \begin{tikzcd}
         f(b)\arrow[d,"f(\beta)"',"\sim"] \arrow[rrr,"g(\gamma)^{-1}\eta \delta f(\beta)"] & & & g(c) \arrow[d,"g(\gamma)","\sim"'] \\
         f(b)\arrow[rrr, "\eta\delta"] & & & g(c),
     \end{tikzcd}
    \]
    hence the pair $(\beta,\gamma)$ defines the required isomorphism.
    Finally let us verify that $\Psi$ and $\Phi$ are inverse to one another. To this end, consider $a'=[b',c',\eta'] \in \varphi^{-1}\varphi(a)$ and choose isomorphisms $\beta \colon b \xrightarrow{\sim} b'$ and $\gamma \colon c \xrightarrow{\sim}c'$. Then we find that 
    \[
    \Psi \Phi(a')=[b,c, \eta \eta^{-1}g(\gamma)^{-1}\eta'f(\beta)]=[b,c, g(\gamma)^{-1}\eta' f(\beta)]
    \]
    which is isomorphic in $\cat A$ to $(b',c',\eta')$ as there is a commutative square
    \[
    \begin{tikzcd}
        f(b)\arrow[d,"f(\beta)"',"\sim"] \arrow[rrr,"g(\gamma)^{-1}\eta'f(\beta)"] & & & g(c) \arrow[d,"g(\gamma)","\sim"']\\
        f(b') \arrow[rrr,"\eta'"] &&& g(c')
    \end{tikzcd}
    \]
    showing that $\Psi \Phi(a')=a'$. For the other composite let $\delta\in\mathrm{Aut}_{\cat D}(f(b))$, then 
    \[
    \Phi \Psi(\delta)=\Phi([b,c,\eta \delta])=[\eta^{-1}g(\gamma)^{-1}(\eta \delta)f(\beta)]=[\delta]
    \]
    since we can choose $\beta$ and $\gamma$ to be the corresponding identities. 
\end{proof}

\begin{Lem}\label{lem-iso-unit}
    Let $\CC\in \CAlg(\PrL)$ be such that $\pi_0(\unit)$ is indecomposable. For any $X \in \Fin$, we have 
    \[
    \Map_{\CAlg(\CC)}(\unit^X, \unit^X) \simeq \Map_{\Fin}(X,X).
    \]
    In particular, the automorphism group of the commutative algebra $\unit^X$ is given by the symmetric group $\Sigma_X$.   
\end{Lem}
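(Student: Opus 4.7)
The plan is to reduce the mapping space in $\CAlg(\cat C)$ down to a product of coproducts of contractible spaces by systematically exploiting the indecomposability of $\unit$ together with the product decomposition $\unit^X \simeq \prod_{x \in X} \unit$ provided by \Cref{con-contensor-functor}. I would carry out the argument by computing the more general mapping space $\Map_{\CAlg(\cat C)}(\unit^X, \unit^Y)$ and then specializing to $Y = X$.

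First, observe that the forgetful functor $\CAlg(\cat C)\to\cat C$ preserves limits, so $\unit^Y \simeq \prod_{y \in Y}\unit$ is also a product in $\CAlg(\cat C)$. This yields the unconditional equivalence
\[
\Map_{\CAlg(\cat C)}(\unit^X, \unit^Y) \simeq \prod_{y\in Y} \Map_{\CAlg(\cat C)}(\unit^X, \unit).
\]
Second, since $\pi_0(\unit)$ is indecomposable by assumption, $\unit$ is an indecomposable commutative algebra, and the product decomposition $\unit^X \simeq \prod_{x \in X}\unit$ together with \Cref{rem:idempotents} (applied with source $\unit^X$ and indecomposable target $\unit$) gives
\[
\Map_{\CAlg(\cat C)}(\unit^X, \unit) \simeq \coprod_{x \in X} \Map_{\CAlg(\cat C)}(\unit, \unit).
\]
Since $\unit$ is the initial object in $\CAlg(\cat C)$, the latter mapping space is contractible, so this coproduct is just the discrete set $X$.

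Splicing the two computations together gives
\[
\Map_{\CAlg(\cat C)}(\unit^X, \unit^Y) \simeq \prod_{y \in Y} X \simeq \Map_{\Set}(Y, X) \simeq \Map_{\Fin}(Y, X),
\]
and setting $Y = X$ yields the first statement of the lemma. For the ``in particular'' clause, the automorphism group is the group of invertible elements in the monoid $\Map_{\Fin}(X,X) = X^X$, namely the bijections $X \to X$, which is precisely $\Sigma_X$. I do not anticipate any genuine obstacle here: the proof is essentially a bookkeeping exercise assembling the construction of $\unit^\bullet$, the hypothesis on $\pi_0(\unit)$, and \Cref{rem:idempotents}.
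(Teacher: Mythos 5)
Your proof is correct and follows essentially the same route as the paper's: decompose $\unit^X\simeq\prod_X\unit$ as a product in $\CAlg(\CC)$, apply \Cref{rem:idempotents} using indecomposability of $\unit$, and use that $\unit$ is initial so $\Map_{\CAlg(\CC)}(\unit,\unit)\simeq\ast$. If anything your bookkeeping (writing $\prod_{y\in Y}\coprod_{x\in X}\ast$) makes the order of the product and coproduct cleaner than the paper's displayed computation, but the substance and the inputs are identical.
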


\begin{proof}
    Note that $\unit^X=\prod_X \unit$. Therefore using \Cref{rem:idempotents}, we calculate
    \begin{align*}
    \Map_{\CAlg(\CC)}(\unit^X, \unit^X) & =\prod_X \coprod_X \Map_{\CAlg(\CC)}(\unit,\unit)\\
    &= \prod_X \coprod_X \Map_{\Fin}(\ast, \ast) \\
    &= \Map_{\Fin}(X,X).\\
    \end{align*}
\end{proof}

\begin{Exa}\label{ex-Cpq}
    Let $G=C_{pq}$ be the cyclic group of order $pq$ for two coprime prime numbers $p$ and $q$. We take the following exhaustive collection of families 
    \[
    \emptyset \subseteq \{e\} \subseteq \{e,C_p\} \subseteq \{e,C_p,C_q\}\subseteq \mathrm{All}.
    \]
     Using \Cref{thm-pullback-stmodfin} with $\mathcal F=\emptyset$ we obtain the following pullback square of groupoids:
    \begin{equation}\label{square-c6}
    \begin{tikzcd}
      \CSep(\Sp_G^\omega)\arrow[r,"L^\omega_{\{e\}}"] \arrow[d,"(\widetilde{\Phi}^e)^\omega"'] & \CSep((\Sp_G /\{e\})^{\omega}) \arrow[d,"\beta_0 \circ (\overline{\Phi}^e)^\omega"]\\
      \CSep(\Fun(BG,\Sp^\omega)) \arrow[r, "\beta_0 \circ q"] & \CSep(\Perf(\mathbb{S}_p^\wedge)^{tG}) \times \CSep( \Perf(\mathbb{S}_q^\wedge )^{tG}).
    \end{tikzcd}
    \end{equation}
    Using \Cref{main-theorem} and the fact that the corresponding Weyl groups are either $p$-groups or $q$-groups, we see that 
    \[
    \CSep((\Sp_G /\{e\})^{\omega}) \simeq(\CFin_G/\{e\})\op,
    \]
    i.e., all separable algebras in $\CSep((\Sp_G /\{e\})^{\omega})$ are standard. Using \Cref{lem-separablyclosed-G}, we also find that 
    \[
    \CSep(\Fun(BG, \Sp^\omega))\simeq \CFin_G\op.
    \]
    We also observe that $(\mathbb{S}_p^\wedge)^{tG}$ and $(\mathbb{S}_q^\wedge)^{tG}$ are indecomposable (argue as in the proof of \Cref{thm-IC}), so the tate categories appearing in the bottom right corner have indecomposable unit.
    By \Cref{prop-pullback-pi0} any object $A \in \CSep(\Sp_G^\omega)$ can be represented by a tuple 
    \[
    A=(A_1,A_2, \eta_1 \colon A_1'\simeq A_2', \eta_2 \colon A_1'' \simeq A_2'' )
    \]
    with
    \begin{itemize}
        \item[(1)] an object $A_1 \in \CSep((\Sp_G /\{e\})^{\omega})$;
        \item[(2)] an object $A_2\in \CSep(\Fun(BG, \Sp^\omega))$;
        \item[(3)] an isomorphism $\eta_1\colon A_1'\simeq A_2'$ between the images of $A_1$ and $A_2$ in $\CSep(\Perf(\mathbb{S}_p^\wedge)^{tG})$;
        \item[(4)] an isomorphism $\eta_2\colon A_1''\simeq A_2''$ between the images of $A_1$ and $A_2$ in $\CSep(\Perf(\mathbb{S}_q^\wedge)^{tG})$.
    \end{itemize}
    We now claim that $A\in\CSep(\Sp_G^\omega)$ determined by $A_1= \unit^{\times 2}$, $A_2=\unit^{\times 2}$, $\eta_1=\mathrm{id}$ and $\eta_2=\sigma \in\Sigma_2$ the flip isomorphism (see \Cref{lem-iso-unit}) is not standard:
    Indeed, since $(\tilde\Phi^e)^\omega$ determines the $G$-set defining a standard algebra, we would have to have $A\simeq \unit^{\times 2}$, which by \Cref{prop-pullback-pi0} would imply
    \[ [(\mathrm{id},\mathrm{id})]=[(\mathrm{id},\sigma)]  \text{ in } \doublefaktor{\Sigma_2}{\Sigma_2 \times \Sigma_2}{\Sigma_2},\]
    which is not true (observe both maps of $\Sigma_2$ are the diagonal embedding).
\end{Exa}
\begin{Rem}
    The argument in the previous example also applies to $\bbZ_G=\infl_e^G \bbZ$. It follows that in the category of compact derived $C_{pq}$-Mackey functor not all separable commutative algebras are standard. 
\end{Rem}
In fact, the above argument is robust enough to give a complete classification of separable algebras in this case. In turn, we will use this to compute the Galois group of $\Sp_{C_{pq}}^\omega$ in the sense of \cite{Mathew2016} for distinct primes $p\neq q$.  Before achieving this we need the following result. 

\begin{Lem}\label{lem-tate-fully-faithful}
    The canonical functors 
    \[
    \Perf(\Sphere_p^{\wedge})^{tC_{pq}}\to (\Perf(\Sphere_p^{\wedge})^{tC_p})^{hC_q}\quad \mathrm{and}\quad \Perf(\Sphere_q^{\wedge})^{tC_{pq}}\to (\Perf(\Sphere_q^{\wedge})^{tC_q})^{hC_p} 
    \]
    are equivalences (recall that the Tate categories are idempotent complete).
\end{Lem}

\begin{proof}
    The situation is symmetric so we only deal with the first functor.
    We observe that there is a solid diagram 
\[
\begin{tikzcd}
    (\Perf(\Sphere^{\wedge}_p)_{hC_p})^{hC_q} \arrow[r, "\mathrm{Nm}_{C_p}^{hC_q}"] & (\Perf(\Sphere_p^{\wedge})^{hC_p})^{hC_q} \arrow[r] & (\Perf(\Sphere_p^{\wedge})^{tC_p})^{hC_q}\\
    \Perf(\Sphere_p^{\wedge})_{hC_{pq}} \arrow[r, "\mathrm{Nm}_{C_{pq}}"] & \Perf(\Sphere_p^{\wedge})^{hC_{pq}}\arrow[r] \arrow[u, "\sim"] & \Perf(\Sphere_p^{\wedge})^{tC_{pq}}
\end{tikzcd}
\]
where the equivalence follows for instance from Fubini's theorem for limits. By definition of the tate categories, the bottom sequence of the diagram is a cofibre sequence (in idempotent stable $\infty$-categories). We claim that the top sequence is also a cofibre sequence. This uses that $q$ is invertible in $\Sphere_p^\wedge$ and so the norm map gives an equivalence $(-)^{hC_q}\simeq (-)_{hC_q}$ and the latter functor preserves cofiber sequences. 
We claim that the image of $(\Perf(\Sphere_p^{\wedge})_{hC_p})^{hC_q}$ and $\Perf(\Sphere_p^{\wedge})_{hC_{pq}}$ under the middle vertical equivalence agree. This would prove the lemma by passing to cofibres.

For one containment, it is enough to note that $\Sphere_p^{\wedge}[C_{pq}]$ is sent to an object of $(\Perf(\Sphere_p^{\wedge})_{hC_p})^{hC_q}$. This follows from the fact that the norm map induces a fully faithful embedding $\Perf(\Sphere_p^{\wedge})_{hC_p}\to \Perf(\Sphere_p^{\wedge})^{hC_p}$ and that, upon taking fixed points, detecting whether an object is in the smaller category can be done underlying, that is, after forgetting the $C_q$-action. For the other containment, we note that since $q$ is invertible, any $X\in(\Perf(\Sphere_p^{\wedge})_{hC_p})^{hC_q}$ is a retract of $X[C_q]$ where the $C_q$-action on $X$ is forgotten. Hence, since the underlying object in $\Perf(\Sphere_p^{\wedge})_{hC_p}$ is in the thick subcategory generated by $\Sphere_p^{\wedge}[C_p]$, we obtain the result. 
\end{proof}

\begin{Thm}\label{thm:sep-in-c6}
    Let $p\neq q$ be prime numbers. We have an equivalence 
    \[
     \CAlg^{\sep}(\Sp_{C_{pq}}^{\omega})\op\simeq \Fin_{C_{pq}} \times_{\Fin} \Fin^{B\Z} 
    \]
    where the pullback is taken along the forgetful functor $\Fin^{B\Z} \to \Fin$ and the $C_{pq}$-fixed points functor $\Fin_{C_{pq}}\to \Fin$.
     In particular, any separable algebra in $\Sp_{C_{pq}}^\omega$ can be written as $A\times B$, where $A$ is standard with proper isotropy, and $B$ corresponds to the $\Fin^{B\mathbb{Z}}$-summand. 
\end{Thm}
\begin{proof}
     Using \Cref{thm-pullback-stmodfin} with $\mathcal F=\emptyset$ we obtain the following pullback square:
    \begin{equation}\label{square-cpq}
    \begin{tikzcd}
      \CAlg^{\sep}(\Sp_{C_{pq}}^\omega)\arrow[r,"L^\omega_{\{e\}}"] \arrow[d,"(\widetilde{\Phi}^e)^\omega"'] & \CAlg^{\sep}((\Sp_{C_{pq}} /\{e\})^{\omega}) \arrow[d,"\beta_0 \circ (\overline{\Phi}^e)^\omega"]\\
      \CAlg^{\sep}(\Fun(BC_{pq},\Sp^\omega)) \arrow[r, "\beta_0 \circ q"] & \CAlg^{\sep}(\Perf(\mathbb{S}_p^\wedge)^{tC_{pq}}) \times \CAlg^{\sep}( \Perf(\mathbb{S}_q^\wedge )^{tC_{pq}}).
    \end{tikzcd}
    \end{equation}
    We claim that 
    \[
    \CAlg^{\sep}((\Sp_{C_{pq}}/\{e\})^\omega)\simeq (\Fin_{C_{pq}}/\{e\})\op \mbox{ and } \CAlg^{\sep}(\Fun(BC_{pq},\Sp^\omega))\simeq \Fin_{C_{pq}}\op.
    \]
    The second equivalence follows from the fact that taking separable algebras commutes with limits. For the first equivalence, we have already argued in \cref{ex-Cpq} that we have an equivalence on groupoids, and so we can conclude by an application of \cref{prop-fully-faulful}.
    
    To prove the theorem we have to identify the maps in the square \eqref{square-cpq} as well as the mapping spaces in the bottom right corner.  Up to the above identification we have a commutative diagram 
    \[
    \begin{tikzcd}
    \Fin_{C_{pq}}\op \arrow[dr, "\Sphere_{C_{pq}}^{(-)}"] \arrow[rrd, bend left, "\mathrm{proj}"]\arrow[ddr, bend right,"\mathrm{id}"']&         & \\
                      & \CAlg^{\sep}(\Sp_{C_{pq}}^{\omega}) \arrow[r,"L^\omega_{\{e\}}"] \arrow[d,"(\widetilde{\Phi}^e)^\omega"'] & (\Fin_{C_{pq}}/\{e\})\op \arrow[d]\\
                      &   \Fin_{C_{pq}}\op \arrow[r] & \CAlg^{\sep}(\Perf(\mathbb{S}_p^\wedge)^{tC_{pq}}) \times \CAlg^{\sep}( \Perf(\mathbb{S}_q^\wedge )^{tC_{pq}}).
    \end{tikzcd}
    \]
    where the square is a pullback, and the tilted functor is \eqref{eq:betterfunctor}. The commutativity of this diagram shows that the bottom horizontal functor identifies with the composite of the projection functor $\Fin_{C_{pq}}\op \to (\Fin_{C_{pq}}/\{e\})\op$ with the right vertical map of the above square. Therefore, it suffices to understand maps in $\CAlg(\Perf(\Sphere_p^\wedge)^{tC_{pq}}\times\Perf(\Sphere_q^\wedge)^{tC_{pq}})$ between standard separable algebras with no trivial isotropy. 

We describe how to do this in $\Perf(\Sphere_p^\wedge)^{tC_{pq}}$, the version with $\Sphere_q^\wedge$ is entirely dual. We first note that $C_{pq}/C_q$ is a retract of $C_{pq}/e$, since $q\in \Sphere_p^\wedge$ is invertible. Therefore, the $C_{pq}$-set $C_{pq}/C_q$ is sent to $0$ in this category, so the functor $\Fin_{C_{pq}}\op\to \CAlg^{\sep}(\Perf(\Sphere_p^\wedge)^{tC_{pq}})$ factors through $\Fin_{C_{pq}}\op/(\{e\},C_q)$. Now recall from \cref{lem-tate-fully-faithful} that there is an equivalence $\Perf(\Sphere_p^\wedge)^{tC_{pq}}\to (\Perf(\Sphere_p^\wedge)^{tC_p})^{hC_q}$.
Since $\Fin_{C_{pq}} \simeq (\Fin_{C_p})^{hC_q}$, the above discussion together with \cref{thm-ic+rc} shows that the map $\Fin_{C_{pq}}\op/(\{e\},C_q)\to \CSep(\Perf(\Sphere_p)^{tC_{pq}})$ is fully faithful. 

Dually, $\CFin_{C_{pq}}\op/(\{e\},C_p)\to \CSep(\Perf(\Sphere_q^{\wedge})^{tC_{pq}})$ is fully faithful. Thus, the pullback we are after is: 
\[\begin{tikzcd}
	P & {(\Fin_{C_{pq}}/\{e\})\op} \\
	{\Fin_{C_{pq}}\op} & {(\Fin_{C_{pq}}/(\{e\},C_q))\op \times (\Fin_{C_{pq}}/(\{e\},C_p))\op}
	\arrow[from=1-1, to=1-2]
	\arrow[from=1-1, to=2-1]
	\arrow[from=1-2, to=2-2]
	\arrow[from=2-1, to=2-2]
\end{tikzcd}\]
where both legs are induced by the projection functors. 

We now aim to describe this pullback, and prove it is equivalent to $\Fin_{C_{pq}}\op\times_{\Fin\op}(\Fin^{B\mathbb Z})\op$, where the left functor is the fixed points functor, and the right functor is the forgetful functor. For simplicity, we do this on opposite categories. 

An object in the pullback in question consists of a tuple $(X,Y,\alpha,\beta)$ where $X,Y$ are $C_{pq}$-sets and $Y$ has no trivial isotropy, and two isomorphisms 

\[
\alpha\colon X/(\{e\}, C_q) \cong Y/C_q \quad 
\mathrm{and} \quad \beta\colon X/(\{e\}, C_p)\cong Y/C_p.
\]
Here, we used the notation from \Cref{cons:modF}. 

To such a tuple, we associate the following object of $\Fin_{C_{pq}}\times_{\Fin}(\Fin^{B\mathbb Z})$: 
$$(X, (\beta^{\mid Y^{C_{pq}}}_{\mid X^{C_{pq}}})^{-1}\circ \alpha^{\mid Y^{C_{pq}}}_{\mid X^{C_{pq}}}) $$
where we note that $\alpha$ and $\beta$ both induce isomorphisms $X^{C_{pq}}\cong Y^{C_{pq}}$. For simplicity of notation, we will typically abusively just write $\alpha,\beta$ also for their co/restrictions. 

This clearly defines a functor on our pullback. Conversely, given $$(Z,f:Z^{C_{pq}}\cong Z^{C_{pq}}),$$ define $(X,Y,\alpha,\beta)$ as follows: $X := Z, Y:= Z/(\{e\}, C_p) \coprod_{Z^{C_{pq}}} Z/(\{e\},C_q)$, where the left map is the canonical inclusion, and the right map is $f^{-1}$ followed by the canonical inclusion.  Note that this is abstractly isomorphic to $Z/\{e\}$, but not canonically.

Furthermore, when removing $C_p$-isotropy (i.e. applying $/C_p$), the inclusion $Z^{C_{pq}}\to Z/(\{e\}, C_q)$ becomes an isomorphism, and so the canonical map $X/(\{e\},C_p)= Z/(\{e\},C_p)\to Y/C_p$ is an isomorphism, which we use to define $\beta$. Similarly for $\alpha$, where we use the other inclusion. 

We now verify that these functors are inverse to one another.

Starting from $(Z,f: Z^{C_{pq}}\cong Z^{C_{pq}})$ and then applying both functors gives $(Z,f)$ back, naturally in $(Z,f)$, as a routine calculation shows. 

For the converse, we start with a tuple $(X,Y,\alpha,\beta)$. Note that the natural maps induce an isomorphism $Y/C_p\coprod_{Y^{C_{pq}}} Y/C_q\cong Y$, so using $\alpha,\beta$, the object $Y$ is naturally isomorphic to $X/(\{e\},C_p)\coprod_{X^{C_{pq}}} X/(\{e\},C_q)$, where the pushout is taken along the natural inclusion on the left, and $\alpha^{-1}\circ \beta$ followed by the natural inclusion on the right. From this observation, one easily concludes that the composite in the other direction is also naturally isomorphic to the identity, which concludes the proof.

Finally, in $\Fin_{C_{pq}}\times_{\Fin}\Fin^{B\mathbb Z}$, we clearly have that any $(Z,f)$ can be written as $(Z\setminus Z^{C_{pq}}, \id_{\emptyset})\coprod (Z^{C_{pq}}, f)$ (though naturally only in isomorphisms). Unwinding the above equivalence, we find that the terms with proper isotropy are in the image of the functor $(\Fin_{C_{pq}})\op\to \CAlg^{\sep}(\Sp_{C_{pq}}^\omega)\simeq P$, which is why every separable algebra can be written $A\times B$ where $A$ is standard with proper isotropy and $B$ corresponding to $\Fin\op\times_{\Fin\op}\Fin^{B\mathbb Z}$ is a ``twist'' of a finite power of the sphere. 
\end{proof}

We can use the above classification of separable algebras to show that the Galois group in this case is non-trivial.

\begin{Cor}\label{cor-galois-cpq}
    For primes $p\neq q$, the Galois group of $\Sp_{C_{pq}}$ is $\widehat{\mathbb Z}$.
\end{Cor}
\begin{proof}
 We have already noted that $\pi_0(\unit)$ is given by the Burnside ring $A(C_{pq})$ which is indecomposable and so the Balmer spectrum of $\Sp_{C_{pq}}^\omega$ is connected by \cite[Proposition 3.7]{NaumannPol}. It follows from \cite[Corollary 8.13]{NaumannPol} that the finite Galois covers are exactly those separable algebras in $\Sp_{C_{pq}}^\omega$ with locally constant (i.e. constant) degree function. In other words, the finite covers are those separable algebras which are descendable. 

In the situation of \Cref{thm:sep-in-c6}, it is clear that if $A\times B$ is a finite cover then so is $A$, but standard separable algebras with proper isotropy are not finite covers (argue as in the proof of \cref{cor-galois}).  Furthermore, by \cite[Lemma 9.6]{NaumannPol}, since any $B\in\Fin^{B\mathbb{Z}}$ maps to a finite product of the unit in either corner of the pullback square \eqref{square-cpq}, we find that they are finite covers. Thus, we find that the the opposite of the category of finite covers of $\Sp_{C_{pq}}^\omega$ is $\Fin^{B\mathbb Z}\simeq \Fin^{\mathrm{cts}}_{\widehat{\mathbb Z}}$, by definition of profinite completion (since each $\Sigma_n$ is finite). 

By definition of the Galois group, cf. \cite[Definition 6.8, Definition 5.39, Theorem 5.36]{Mathew2016}, the result follows. 
\end{proof}

\subsection{Further considerations}
We finish this section with some general comments on the classification of separable commutative algebras in $G$-spectra.
\begin{Rem}
It is a folklore result that after inverting the order of the group, the geometric fixed points functors induces an equivalence
\[
\Sp_G[1/|G|]\xrightarrow{\sim}\prod_{(H)\subseteq G}\Fun(BW_G(H), \Sp[1/|G|]).
\]
Now we know by \cite[Proposition 10.5]{NaumannPol} that the separable commutative algebras in $\Sp[1/|G|]^{\omega}$ are classified by the \'etale fundamental group $\mathrm{Gal}(1/|G|])$ of $\Z[1/|G|]$, which is not zero in general.

Hence, $$\CSep(\Sp[1/|G|]^\omega) \xrightarrow{\sim} \prod_{(H)\subseteq G} \Fun(BW_G(H), \CFin_{\mathrm{Gal}(\Z[1/|G|])})$$
Therefore in this case we have more separable commutative algebras. Since we cannot calculate this \'etale fundamental group, this description is as good as one can hope for.
\end{Rem}
\begin{Rem}
    If $G$ is a $p$-group and we consider separable commutative algebras in $\Sp_{G,(p)}^\omega$, then the classification becomes substantially more intricate than if we invert the group order. In this setting, the $H$-geometric fixed points of a separable commutative algebra are governed by the \'etale fundamental group of $\Z_{(p)}$ (which is highly nontrivial) and its residual $W_G(H)$-action, as above. However, here, the gluing data is also nontrivial which, together with this interplay between the Weyl group and the \'{e}tale fundamental group, makes classification considerably harder.
\end{Rem}
\section{Normed separable algebras}\label{section-normed}
We note that the standard separable algebras in $\Sp_G^\omega$ all come with an important piece of extra structure: multiplicative norms. We refer to \cite{Normpaper} and \cite{NormpaperII} for background on norms in equivariant homotopy theory and normed algebras. 
It is worth wondering whether this extra piece of information is sufficient to completely single them out, i.e. whether our examples of nonstandard separable algebras in $\Sp_G^\omega$ can admit norms. We prove below that our examples over $C_{pq}$ do \emph{not} admit a normed structure. In fact, the situation is somewhat simpler than without norms, in that we do not need to restrict to $p$-groups to get a complete classification, the important feature here is instead solvability: 
\begin{Thm}\label{thm-normed-class}
    Let $G$ be a solvable group and $A\in\CAlg(\Sp_G^\omega)$ a separable commutative algebra. Suppose that $A$ admits norms, that is, there is a normed algebra whose underlying commutative algebra is $A$. In this case, $A$ is standard, and $A$ admits a unique normed structure.  
\end{Thm}
\begin{Rem}
    The restriction to solvable groups is necessary, and is closely related to \Cref{rem-solvable}. The approach we provide here in the solvable case does not seem to extend to a full classification in the non-solvable case, but a full classification seems more approachable than without norms, even in the case of a general finite group $G$. We will explain the complications in the general case after the proof of the above theorem.
\end{Rem}
We will need a couple of lemmas about normed algebras. 
\begin{Lem}\label{lem-nrm-0}
    Let $A\in\CAlg(\Sp_G)$ admitting norms. If $A^e= 0$, then $A=0$. 
\end{Lem}

\begin{proof}
    One of the pieces of data of a normed algebra structure is an algebra map $N^G_e A^e\to A$. By our assumptions,  $N^G_e0 = 0$. The only ring with an algebra map from the zero ring is the zero ring. 
\end{proof}
\begin{Rem}
    The above lemma is false for algebras without norms as witnessed by $\widetilde{E\mathcal P}$ or $\widetilde{EG}$. 
\end{Rem}
\begin{Lem}\label{lm-forgetnorm}
    The forgetful functor $U \colon \mathrm{NAlg}(\Sp_G) \to \CAlg(\Sp_G)$ from normed algebras to commutative algebras preserves (co)limits and it is conservative. In particular, it is both monadic and comonadic.  
\end{Lem}

\begin{proof}
This is the equivariant variant of \cite[Proposition 7.6(3)]{Normpaper}, see also discussion after \cite[Definition 9.14]{Normpaper}. 
\end{proof}
\begin{Lem}\label{lem-monadic}
Let $G$ be a finite group and $H\subseteq G$ be a subgroup. The coinduction functor induces an equivalence
     \[
    \mathrm{NAlg}(\Sp_H) \simeq \mathrm{NAlg}(\Sp_G)_{\Sphere^{G/H}/}.
    \]
\end{Lem}
\begin{proof}
Since $\CoInd_H^G \Sphere\simeq \Sphere^{G/H}$, the coinduction functor induces the first functor in the following composite:
\[
\mathrm{NAlg}(\Sp_H) \to \mathrm{NAlg}(\Sp_G)_{\Sphere^{G/H}/} \xrightarrow{\mathrm{fgt}} \mathrm{NAlg}(\Sp_G).
\]
We note that the forgetful functor is monadic, and that the composite functor is also monadic ($\CoInd_H^G \colon \Sp_H \to \Sp_G$ is monadic by \cite[Theorem 5.32]{MNN}, and the functor which forgets the normed structure preserves enough colimits and it is conservative). Let $T$ be the monad associated to the composite functor, and let $S$ be the monad associated to the second functor. By the universal property, we get a map of monads $S \to T$. If we show that this map is an equivalence, then we prove the lemma since 
\[
\mathrm{NAlg}(\Sp_H)\simeq \Mod_T(\mathrm{NAlg}(\Sp_G)) \simeq \Mod_S(\mathrm{NAlg}(\Sp_G))\simeq \mathrm{NAlg}(\Sp_G)_{\Sphere^{G/H}/}.
\]
In our case, the map of monads is of the form: 
\begin{equation}\label{monad}
\Sphere^{G/H} \otimes A \to  \CoInd^G_H \res_H^G A
\end{equation}
where we used that the monad for $\cat C_{x/} \to \cat C$ is given by the coproduct with $x$. On the $A$ part, it has to be the unit map (by compatibility of the unit maps), and on the $\Sphere^{G/H}$ part, it has to be the special case $A = \Sphere$ (by naturality). So we simply need to check that the map \eqref{monad} induced by these requirements is an equivalence. Since $\mathrm{NAlg}(\Sp_G) \to \CAlg(\Sp_G)$ is conservative, we can check that this map is an equivalence in $ \CAlg(\Sp_G)$, where it holds by the projection formula.
\end{proof}
We are now equipped to prove the theorem, though unlike in the main body of the text we do not require an isotropy separation argument: 
\begin{proof}[Proof of \Cref{thm-normed-class}]
    We proceed by induction on the group $G$ (this is possible since subgroups of solvable groups are solvable). 

    If $G$ is the trivial group, then there is no difference between normed algebras and commutative algebras, so the theorem holds true trivially. This prove the base case.

    Let $A$ be a normed algebra in $\Sp_G^\omega$, and suppose $A$ is separable as a commutative algebra. Since $\Sphere$ is separablly closed (see \Cref{ex-sphere-sepclosed}) we have $A^e\simeq \Sphere^X$ for some finite set $X$ with $G$-action. 

    We note that $N_e^G (\Sphere^X)\simeq \prod_{F(G,X)}\Sphere$ where $F(G,X)$ is the coinduction of $X$. Observe that, as a $G$-set, $F(G,X)\cong X \coprod Y$ for some other finite $G$-set $Y$, where $X$ is embedded as the equivariant functions $G\to X$. It follows that $N_e^G(A^e)\cong \Sphere^X\times \Sphere^Y$ as normed algebras. Base-changing this decomposition along the normed algebra map $N_e^G(A^e)\to A$ and using \Cref{lm-forgetnorm} we obtain a decomposition $A\simeq A_0\times A_1$ as normed algebras.

    Upon taking $e$-fixed points, we find that the map $(N^G_eA^e)^e\to A^e$ is the projection $\prod_{F(G,X)}\Sphere\to \prod_X \Sphere$, so that $A_1^e = 0$. As $A_1$ is normed, \Cref{lem-nrm-0} implies $A_1=0$, so $A=A_0$ has a normed map from $\Sphere^X$, the standard algebra with its normed structure. Decomposing $X$ into orbits and using \cref{lem-monad}
    we reduce, by induction, to the case where $A^e = \Sphere$, where we need to prove that $A\simeq\Sphere_G$. 

    By induction again, we find that for every proper subgroup, $A_{\mid H}\simeq\Sphere_H$. 

    Again, by separable closure of the sphere, we find that $\Phi^G A\simeq \Sphere^Y$ for some finite set $Y$. We now do a case distinction based on the cardinality of $Y$. 
    \begin{enumerate}
        \item  If $|Y|=1$, then we are done since then the map $\Sphere_G\to A$ is an equivalence by isotropy separation. 
        \item If $|Y|=0$, then $A\simeq E\mathcal P_+$ for $\mathcal{P}$ the family of proper subgroups. If $E\mathcal P_+$ has an algebra structure, then the unit map induces a map $\Sphere_G\to E\mathcal P_+\times \widetilde{E\mathcal P}$ which one can check is an equivalence by isotropy separation, so that $\widetilde{E\mathcal P}$ is dualizable. We will see below that this is not possible if $G$ is solvable. 
        \item If $|Y|>1$, we consider instead $A\otimes A$. By \cite[Corollary 5.2]{NaumannPol}, it splits as $A\otimes A\simeq A\times C$ for some commutative algebra $C$. Restricting to proper subgroups and taking geometric fixed points, we see that $C\simeq \widetilde{E\mathcal P}^{Y^2\setminus Y}$. Since $A\otimes A$ is dualizable, the cardinality of $Y$ being $>1$ guarantees that $\widetilde{E\mathcal P}$ is also dualizable, which, as mentioned in the case above, we will preclude in a second. 
    \end{enumerate}
    The above list shows that it suffices to show that for a solvable group, $\widetilde{E\mathcal P}$ is not dualizable. If it is dualizable, then, since it is an idempotent algebra, we must have a splitting $\Sphere_G\simeq \widetilde{E\mathcal P}\times B$ for some other $B$ (see \cite{Maximenote}), and hence we get an idempotent in $\pi_0^G(\Sphere_G)\cong A(G)$, the Burnside ring. As explained in \Cref{rem-solvable}, these do not exist for solvable groups! So we are indeed done.

\end{proof}
Let us now explain what goes wrong in the non-solvable case. Most of the above proof works in that case as well\footnote{Though, of course, it is a proof by induction, so if the overall goal fails, also the individual steps do!}, until the point where we conclude that $\widetilde{E\mathcal P}$ is dualizable. Surprisingly, in the non-solvable case, this can in fact happen! 
\begin{Exa}\label{ex-EP-finite}
In \cite{Kremer}, Kremer recalls a construction of Floyd--Richardson showing that for $G=A_5$, the universal space $E\mathcal P$ admits a finite $A_5$-CW-model.  He also recalls a result of Bob Oliver \cite{oliver} which implies in particular that the same thing happens for all $G$ which are minimally non-solvable, i.e. non-solvable but every proper subgroup is. 
\end{Exa}

In fact, when this happens, it always leads to nonstandard, compact, normed, separable algebras.  
\begin{Prop}\label{prop-EP-normed}
    Let $G$ be a finite group, and suppose $E\mathcal{P}_+$ is dualizable in $\Sp_G$. In this case, it admits a unique normed structure, and furthermore the algebra structure is separable.
\end{Prop}
There are various ways of arguing, and we offer one which includes interesting intermediate statements. 

In particular, we use the following result, which appears to be new: 
\begin{Prop}\label{prop-norm-res}
    Let $A\in \mathrm{NAlg}(\Sp_G)$ be a normed algebra and consider the full subcategory of the slice category 
    \[
    \mathrm{NAlg}(\Sp_G)^{\mathrm{pr}}_{A/} \subseteq \mathrm{NAlg}(\Sp_G)_{A/}
    \] 
    spanned by those normed algebras $B$ under $A$ such that $A\to B$ restricts to an equivalence on all proper subgroups. The forgetful functor 
    \[
      \mathrm{NAlg}(\Sp_G)^{\mathrm{pr}}_{A/} \to \CAlg(\Sp_G)_{A/}
    \]
    is fully faithful, with essential image given by the commutative algebras $B$ under $A$ such that $A\to B$ restricts to an equivalence on all proper subgroups. 
\end{Prop}
More informally, the above result says that if $A\to B$ is a map of commutative algebras that restricts to an equivalence on all proper subgroups, then any normed algebra structure on $A$ extends to a unique normed algebra structure on $B$. 

To establish this, we will use:
\begin{Lem}\label{lem-monad}
    Let $N\colon  \CAlg(\Sp_G)\to \CAlg(\Sp_G)$ be the monad corresponding to the forgetful functor from normed algebras to commutative algebras. Let $f \colon A\to B$ be a map of commutative algebras, and suppose it is an equivalence upon restriction to every proper subgroup. In this case, the following naturality square is a pushout of commutative algebras:
     % https://q.uiver.app/#q=WzAsNCxbMCwwLCJBIl0sWzEsMCwiQiJdLFswLDEsIk5BIl0sWzEsMSwiTkIiXSxbMCwyXSxbMiwzXSxbMSwzXSxbMCwxXV0=
\[\begin{tikzcd}
	A & B \\
	NA & NB.
	\arrow["f",from=1-1, to=1-2]
	\arrow[from=1-1, to=2-1]
	\arrow[from=1-2, to=2-2]
	\arrow["N(f)",from=2-1, to=2-2]
\end{tikzcd}\]
\end{Lem}
\begin{proof}
The bar resolution shows that any such $f\colon A\to B$ is a (sifted) colimit of maps between free commutative algebras on objects of $\Sp_G$, all satisfying the same restriction property. Thus, without loss of generality we may assume $A,B$ are free, say on $X,Y\in\Sp_G$, and the map $f$ is induced by a map $X\to Y$ whose restriction to proper subgroups is an equivalence. 

We need to show that the above square is a pushout in this case. Since geometric fixed points and restriction to proper subgroups are symmetric monoidal, colimit preserving and jointly conservative, this can be checked after either operation. After restricting to proper subgroups, both the top and bottom horizontal maps become equivalences, so the claim is clear, and we reduce to checking it on geometric fixed points. 

There, we claim that if $A$ is free on $X$, $\Phi^G(NA)$ splits as $\Phi^G(A)\otimes F(X_{\mid \mathcal P}) $ for some functor $F$ depending only on the restriction of $X$ to proper subgroups. 

This follows from the formula  $NA\simeq \underline{\colim}_{G/H\in\underline{\Fin}^\simeq} N_H^G X$ from \cite[Corollary 3.48]{NormpaperII}. Here, $\underline{\Fin}^\simeq$ is the $G$-groupoid of finite sets, whose $H$-fixed points are $\CFin_H$ for every $H$, and $\underline{\colim}$ means we are taking a $G$-colimit (or parametrized colimit) over a certain $G$-functor $\underline{\Fin}^\simeq \to \underline{\Sp}$, informally described as sending a finite $H$-set to the corresponding norm of $X$. 

In any case, the geometric fixed points of this $G$-colimit are given by the ordinary colimit of the geometric fixed points indexed by the ordinary category $\CFin_G$ (see \cite[Observation 4.2.2]{HKK2024}), which yields $$\Phi^G NA \simeq \colim_{S\in \CFin_G}\Phi^GN^S (X_{\mid S}).$$ Using that $\CFin_G \simeq \CFin \times \CFin_\mathcal P$ we can rewrite this colimit as $\colim_{S\in \CFin}\Phi^GX^{\otimes S}\otimes \colim_{S\in \CFin_{\mathcal P}} \Phi^GN^S(X_{\mid S})$, and since $A$ is free on $X$, the first tensor summand is exactly $\Phi^G A$, while the second tensor summand only depends on $X_{\mid \mathcal P}$, as was to be shown. 

Having this splitting, naturally in $X$, the pushout claim follows since tensor products are coproducts of commutative algebras. 
\end{proof}
\begin{proof}[Proof of \Cref{prop-norm-res}]
    By a slight abuse of notation we denote by $N$ also the left adjoint to the forgetful functor $U \colon \CAlg(\Sp_G)\to \mathrm{NAlg}(\Sp_G)$. Passing to the slice under $A$, we still have a forgetful functor $U: \mathrm{NAlg}(\Sp_G)_{A/}\to \mathrm{CAlg}(\Sp_G)_{A/}$, but now its left adjoint is given by $B\mapsto A\otimes_{NA}NB$, because relative tensor products are pushouts. The unit of the adjunction is induced by the unit $B\to NB$, and the counit by the naturality square 
    \[\begin{tikzcd}
	NA & NB \\
	A & B.
	\arrow["N(f)",from=1-1, to=1-2]
	\arrow[from=1-1, to=2-1]
	\arrow[from=1-2, to=2-2]
	\arrow["f",from=2-1, to=2-2]
\end{tikzcd}\]
Now when $B$ is a  commutative algebra with a map from $A$ whose restriction to proper subgroups is an equivalence, \Cref{lem-monad} together with pushout pasting shows that the map to the pushout $B\to A\otimes_{NA}NB$ is an equivalence, i.e. the unit of the adjunction restricts to an equivalence on such $B$'s. Since the right adjoint is conservative (see \cref{lm-forgetnorm}), it follows that the whole restricted adjunction is an equivalence on this subcategory, as was to be shown. 
\end{proof}

\begin{proof}[Proof of \Cref{prop-EP-normed}]
If $E\mathcal{P}_+$ is dualizable, then since it is a coidempotent coalgebra, we obtain a splitting $\Sphere_G\simeq E\mathcal{P}_+\times \widetilde{E\mathcal P}$ which witnesses $E\mathcal{P}_+$ as an idempotent algebra, see \cite{Maximenote}. Furthermore, the unit map $\Sphere_G\to E\mathcal{P}_+$ then induces an equivalence on every proper subgroup, so that by \Cref{prop-norm-res}, the algebra $E\mathcal{P}_+$ admits a unique normed structure. 
\end{proof}

In fact, this is not the only counterexample: when $G$ is minimally non-solvable (that is, $G$ is not solvable but all its proper subgroups are; e.g. $G=A_5$), then it follows from \Cref{ex-EP-finite} that $E\mathcal P_+$ is dualizable and hence the above applies, but the same argument shows that $\Sphere_G\times \widetilde{E\mathcal P}^X$ is also uniquely normed for every finite set $X$. Overall, in the induction ``strategy'' from the proof of \Cref{thm-normed-class}, we see that if $G$ is non-solvable, then every non-solvable subgroup of $G$ may contribute to nonstandard separable normed algebras. 

A general classification therefore seems combinatorially involved, but it appears also to be more tractable than the case of not-necessarily-normed separable algebras.  
\section{Other examples}
In this final section we record some other classification results. 

\subsection{Free $G$-spectra}
Consider $R \in\CAlg(\Sp_G)$ such that $R \otimes  \widetilde{E}G\simeq 0$ and $\Phi^e(R)$ separably closed. An example to keep in mind is Atiyah's K-theory with reality where $G=C_2$, see \cite[Example 8.15]{BCHNP}; note that $\Phi^e KR=KU$ is separably closed by \cite[Theorem 12.9]{NaumannPol}.
Consider the pullback square from \Cref{cor-pullback-perfect-G-spectra} with $R$ as above, $\mathcal{F}=\emptyset$ and $K=e$. The right top vertex of the square is 
\[
(\Mod_{\Sp_G}(R)/\{e\})^\omega \simeq \Mod_{\Sp_G}(R \otimes \widetilde{E}G)^\omega\simeq 0
\]
It follows that the functor $\Phi^e \colon \Mod_{\Sp_G}(R)^\omega \to \Perf(\Phi^e R)^{hG}$ is fully faithful. 
Passing to separable algebras we obtain a fully faithful functor 
\[
\CAlg^{\sep}(\Mod_{\Sp_G}(R)^\omega) \hookrightarrow  \CAlg^{\sep}(\Perf(\Phi^e( R)))^{hG} \simeq \Fin_{G}\op
\]
by our assumption on $\Phi^e(R)$. But $R \otimes \mathbb{D}(\Sigma_+^\infty X)\in \CAlg(\Mod_{\Sp_G}(R)^\omega)$ for $X\in \Fin_G$ are always separable, so the above map is an equivalence.

\subsection{Borel $G$-spectra}
Consider $R \in \CAlg$ and its associated Borel $G$-spectrum $\underline{R}\in \CAlg(\Sp_G)$. 
Then it follows from~\cite[Corollary 6.21]{MNN} that there is a fully faithful functor 
\[
\Mod_{\Sp_G}(\underline{R})^\omega \hookrightarrow \Fun(BG, \Mod(R))^\omega \subseteq \Fun(BG, \Perf(R)).
\]
given by $\Phi^e$. Passing to separable algebras we find that 
\[
\CAlg^{\sep}(\Mod_{\Sp_G}(\underline{R})^\omega) \hookrightarrow  \Fun(BG, \CAlg^{\sep}(\Perf(R))).
\]
If $R$ is separably closed, so that $\CAlg^{\sep}(\Perf(R)) \simeq \Fin\op$, then the above functor is also essentially surjective since $\underline{R} \otimes \mathbb{D}(\Sigma_+^\infty X)$ is a separable commutative algebra in $\Mod_{\Sp_G}(\underline{R})^\omega$ hitting the finite $G$-set $X$.

\bibliographystyle{alpha}
\bibliography{reference}

\end{document}